\date{}
\def\theenumi{\arabic{enumi}}
\def\theenumii{\alph{enumii}}
\def\p@enumii{\theenumi.}
\def\theenumiii{\arabic{enumiii}}
\def\p@enumiii{(\theenumi)(\theenumii)}
\def\p@enumiv{\p@enumiii.\theenumiii}
\newtheorem{theorem}{Theorem}[section]
\newtheorem{corollary}[theorem]{Corollary}
\newtheorem{proposition}[theorem]{Proposition}
\newtheorem{lemma}[theorem]{Lemma}
\theoremstyle{definition}
\newtheorem{example}[theorem]{Example}
\newtheorem{definition}[theorem]{Definition}
\newtheorem{remark}[theorem]{Remark}
\newtheorem{assumption}[theorem]{Assumption}
\DeclareMathOperator*{\esssup}{ess\,sup}
\begin{document}
\title{Rough Hypoellipticity for the Heat Equation in Dirichlet Spaces}
\author{Qi Hou\thanks{%
Partially supported by NSF grant DMS  1404435 and DMS 1707589} \,
and Laurent Saloff-Coste\thanks{ Partially supported by NSF grant DMS  1404435 and DMS 1707589} \\
{\small Department of Mathematics}\\
{\small Cornell University}  }
\maketitle
\begin{abstract}
This paper aims at proving the local boundedness and continuity of solutions of the heat equation in the context of Dirichlet spaces under some rather weak additional assumptions. We consider symmetric local regular Dirichlet forms which satisfy mild assumptions concerning (a) the existence of cut-off functions, (b) a local ultracontractivity hypothesis, and (c) a weak off-diagonal upper bound. In this setting, local weak solutions of the heat equation, and their time derivatives, are shown to be locally bounded; they are further locally continuous, if the semigroup admits a locally continuous density function. Applications of the results are provided including discussion on the existence of locally bounded heat kernel; $L^\infty$\ structure results for ancient solutions of the heat equation. The last section presents a special case where the $L^\infty$\ off-diagonal upper bound follows from the ultracontractivity property of the semigroup. This paper is a continuation of \cite{L2}.
\end{abstract}

\section{Introduction}
\setcounter{equation}{0}
In this paper we study the local boundedness and continuity properties of local weak solutions of heat equations in the setting of symmetric local regular Dirichlet spaces and other more general settings. Let $(X,d,m)$\ be a metric measure space where $X$\ is locally compact separable and $m$\ is a Radon measure with full support, $d$\ is some metric on $X$\ that we omit writing in the rest of the paper. Let $(\mathcal{E}, \mathcal{F})$\ be a symmetric regular local Dirichlet form on $L^2(X,m)$\ with domain $\mathcal{F}$. Let $(H_t)_{t>0}$\ and $-P$\ be the associated semigroup (referred to as the heat semigroup) and generator. We call $(X,m,\mathcal{E},\mathcal{F})$\ a Dirichlet space. The following three items list our main assumptions on the Dirichlet space.
\begin{itemize}
	    \item[(i)] The Dirichlet space admits nice cut-off functions, in the sense that there exists some topological basis $\mathcal{TB}$\ of $X$, that for any $U,V\in\mathcal{TB}$\ with $V\Subset U\Subset X$, for any $0<C_1<1$, there exists a positive constant $C_2(C_1,V,U)$\ and a cut-off function $\eta$\ satisfying $\eta\equiv 1$\ on $V$, $\mbox{supp}\{\eta\}\subset U$, such that for any function $f$\ in $\mathcal{F}$,
	    \begin{eqnarray*}
	    \int_Xf^2\,d\Gamma(\eta,\,\eta)\leq C_1\int_X\eta^2\,d\Gamma(f,\,f)+C_2\int_{\scaleto{\mbox{supp}\{\eta\}}{5pt}}f^2\,dm.
	    \end{eqnarray*}
	    Here $\Gamma$\ denotes the energy measure associated with the Dirichlet form $\mathcal{E}$.
	    
	    Such cut-off functions are generalizations of cut-off functions with bounded gradient. Many fractal spaces, including the Sierpinski gasket and countable products of Sierpinski gaskets, admit such cut-off functions.
	    \item[(ii)] The semigroup $H_t$\ is locally ultracontractive. That is, for any precompact open subset $\Omega\Subset X$, there exists some finite interval $(0,T)$, $T>0$, and some continuous nonincreasing function $M_\Omega(t):(0,T)\rightarrow \mathbb{R}_+$, such that
		\begin{eqnarray*}
		\left|\left|H_t\right|\right|_{L^2(\Omega)\rightarrow L^\infty(\Omega)}\leq e^{M_\Omega(t)}.
		\end{eqnarray*}
		When the semigroup is globally ultracontractive, i.e., for the whole space $X$, there is a function $M(t)$\ satisfying the same properties, such that $||H_t||_{L^2(X)\rightarrow L^\infty(X)}\leq e^{M(t)}$, it is clear that the semigroup admits an essentially bounded density function. We show in this paper that when there exist nice cut-off functions, the local ultracontractivity condition together with the $L^\infty$\ off-diagonal upper bound for the semigroup defined in the next item, implies the existence of a locally bounded density function $h(t,x,y)$.
		\item[(iii)] $H_t$\ further satisfies the $L^\infty$\ off-diagonal upper bound. That is, for any precompact open sets $V,W\Subset X$\ with $\overline{V}\cap \overline{W}=\emptyset$, any $n\in \mathbb{N}$, there exists some positive function $G(V,W,n,t)=:G(n,t)$, continuous in $t$\ ($t\in (0,T)$\ for some $T>0$), such that for any $v,w\in L^1(X)$\ with $\mbox{supp}\{v\}\subset V$, $\mbox{supp}\{w\}\subset W$, 
		\begin{eqnarray*}
		\left|\int_X\partial_t^nH_tv\cdot w\,dm\right|\leq e^{-G(n,t)}\left|\left|v\right|\right|_{L^1}\left|\left|w\right|\right|_{L^1},
		\end{eqnarray*}
		where $G(n,t)$\ satisfies that for any $a\geq 0$, any $n\in \mathbb{N}$,
        \begin{eqnarray*}
        \lim_{t\rightarrow 0^+}\frac{1}{t^a}e^{-G(n,t)}<\infty.
        \end{eqnarray*}
        Local ultracontractivity sometimes implies such $L^\infty$\ off-diagonal upper bound. In the last section we discuss one such case.
	\end{itemize}
Often when the semigroup admits a density function (called the heat kernel) that satisfies some appropriate Gaussian estimates, it follows that the semigroup satisfies (ii) and (iii). These two items can be considered as a relaxation of good Gaussian estimates of heat kernels. One main result we present in this paper is the following theorem.
\begin{theorem}
\label{modelthm}
	Under the above assumptions, given any open subset $U\subset X$\ and any open interval $I=(a,b)\subset \mathbb{R}$, given any function $f$\ that is locally in $W^{n,\infty}(I\rightarrow L^\infty(U))$\ where $n\in \mathbb{N}$, let $u$\ be any local weak solution of $(\partial_t+P)u=f$\ on $I\times U$. Then $u$\ is locally in $W^{n,\infty}(I\rightarrow L^\infty(U))$. Moreover, if the the semigroup $(H_t)_{t>0}$\ admits a density function $h(t,x,y)$\ that is continuous on $(0,c)\times V\times V$\ for some $(0,c)\subset (0,T)$, $V\subset U$, then $u$\ is continuous on $I\times V$.
\end{theorem}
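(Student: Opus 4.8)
The plan is to pass from the weak, local information on $u$ to a \emph{global} Duhamel representation of a space--time cut-off of $u$, and then to read off regularity from the smoothing and off-diagonal properties of the semigroup packaged in (i)--(iii). Fix a cylinder $(a^*,b^*)\times U^*$ with $\overline{(a^*,b^*)\times U^*}\subset I\times U$, and choose nested open sets $U^*\Subset\tilde U\Subset U''\Subset U$, a cut-off $\eta$ as in (i) with $\eta\equiv1$ on $\tilde U$ and $\mathrm{supp}\,\eta\subset U''$, and $\psi\in C_c^\infty(I)$ equal to $1$ near $[a^*,b^*]$ and vanishing to all orders at the ends of its support. Using (i) — which guarantees $\eta u(t)\in\mathcal F$, controls the energy of $\eta u$, and (applied once with test function $u(t)$ and once with the generic test function) controls the ``commutator'' $\Lambda(t):=P(\eta u(t))-\eta Pu(t)$ — one verifies that $v:=\psi\eta u$ is, on $\mathbb R\times X$, a weak solution of $(\partial_t+P)v=g$ with $v\equiv0$ below $\mathrm{supp}\,\psi$, where
\[
g=\psi\eta f+\psi'\eta u+\psi\Lambda ,
\]
and $\Lambda(t)\in\mathcal F'$ is supported in $\mathrm{supp}\,\nabla\eta\subset U''\setminus\tilde U$ with $\|\Lambda(t)\|_{\mathcal F'}\lesssim\bigl(\int_{U''}d\Gamma(u(t),u(t))+\|u(t)\|_{L^2(U'')}^2\bigr)^{1/2}$, so $\|\Lambda(\cdot)\|_{\mathcal F'}\in L^2_{\mathrm{loc}}(I)$ since $u$ is a local weak solution. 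The standard uniqueness theorem for parabolic problems in the triple $\mathcal F\subset L^2\subset\mathcal F'$ then yields, for $t$ in the good range and any $s$ below $\mathrm{supp}\,\psi$, $v(t)=\int_s^t H_{t-\tau}g(\tau)\,d\tau$, so that $u=v$ on $\tilde U$. This weak-solution formalism and the representation formula are essentially those of \cite{L2}.

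\textbf{The $W^{n,\infty}$ estimate.}
It remains to bound $\int_s^t H_{t-\tau}g$ and its $t$-derivatives in $L^\infty(U^*)$, piece by piece. \emph{(a) The $f$-piece:} integrating by parts in $\tau$ moves all time derivatives onto $f$ (boundary terms vanish because $\psi$ is flat at the ends), giving $\int_s^t H_{t-\tau}\partial_\tau^n(\psi\eta f)(\tau)\,d\tau$; since $f$ is locally in $W^{n,\infty}$ and $\|H_r\|_{L^\infty\to L^\infty}\le1$, this is bounded by $C\|f\|_{W^{n,\infty}}$. \emph{(b) The $\psi'\eta u$-piece:} its $\tau$-support lies in the transition regions of $\psi$, which are separated from $[a^*,b^*]$, so for $t\in(a^*,b^*)$ the quantity $t-\tau$ stays in a compact subinterval of $(0,\infty)$ and $\partial_t^n H_{t-\tau}=(-P)^n H_{t-\tau}$ may be pulled under the integral; one uses here that $\|(-P)^nH_r\|_{L^2(X)\to L^\infty(U^*)}$ is finite and locally bounded in $r>0$, which follows by writing $(-P)^nH_r=H_{r/2}\circ(-P)^nH_{r/2}$, bounding the second factor in $L^2(X)$ by the spectral theorem, and applying the \emph{case $n=0$} of the present theorem to the caloric function $\rho\mapsto H_\rho\bigl((-P)^nH_{r/2}\phi\bigr)$ — so no circularity arises. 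This piece is then bounded by $C\|u\|_{L^2_{\mathrm{loc}}}$. \emph{(c) The $\psi\Lambda$-piece:} its spatial support $\mathrm{supp}\,\nabla\eta$ has closure disjoint from $\overline{U^*}$, so by locality of $P$ all boundary terms arising when $\partial_t^n$ meets the upper limit vanish on $U^*$, and $\|(-P)^nH_{t-\tau}(\psi\Lambda(\tau))\|_{L^\infty(U^*)}\le\kappa_n(t-\tau)\|\Lambda(\tau)\|_{\mathcal F'}$ with $\kappa_n(r)$ bounded — indeed, for $n\ge1$, decaying faster than any power of $r$ — as $r\to0^+$; the version of this estimate for an $\mathcal F'$-source follows from (iii) combined with an interior (Caccioppoli-type) estimate, which is where (i) re-enters. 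Since $\kappa_n\in L^2_{\mathrm{loc}}$ near $0$ and $\|\Lambda(\cdot)\|_{\mathcal F'}\in L^2_{\mathrm{loc}}(I)$, Cauchy--Schwarz in $\tau$ bounds this piece. Adding the three bounds gives $v\in W^{n,\infty}((a^*,b^*)\to L^\infty(U^*))$; as the cylinder was arbitrary, $u$ is locally in $W^{n,\infty}(I\to L^\infty(U))$, and in particular (case $n=0$) $u\in L^\infty_{\mathrm{loc}}$.

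\textbf{Continuity.}
Suppose now $h$ is continuous on $(0,c)\times V\times V$ and fix $(t_0,x_0)\in I\times V$. Re-run the construction with $\mathrm{supp}\,\eta\Subset V$, $\eta\equiv1$ near $x_0$, and use the representation from an initial time $s$ with $\max(a,t_0-c)<s<t_0$, with $\psi\equiv1$ on $(s,\cdot)$ near $t_0$, and with $u(s,\cdot)\in L^\infty$ — possible for a.e.\ such $s$ by the local boundedness just proved. Then $\psi'\equiv0$ between $s$ and $t$, so on a neighbourhood of $x_0$,
\[
v(t)=H_{t-s}\bigl(\eta u(s)\bigr)+\int_s^t H_{t-\tau}\bigl(\eta f(\tau)\bigr)\,d\tau+\int_s^t H_{t-\tau}\Lambda(\tau)\,d\tau .
\]
Writing each term through the kernel $h$: the first is continuous in $(t,x)$ near $(t_0,x_0)$ by dominated convergence, since there $t-s$ ranges over a compact subinterval of $(0,c)$ on which $h$ is continuous and bounded while $\eta u(s)\in L^\infty(V)$ and $m(V)<\infty$; the second equals, for each $\varepsilon>0$, a continuous piece over $\{t-\tau\ge\varepsilon\}$ (by the same reasoning) plus a remainder of size $\le\varepsilon\|f\|_\infty$ uniformly (using $\int_V h_r(x,\cdot)\,dm\le1$), hence is continuous as a uniform limit; and the third is continuous because $\Lambda(\tau)$ is supported away from $x_0$, where $h$ is continuous and, by (iii) (which forces $e^{-G(0,r)}$ to stay bounded as $r\to0^+$), bounded, while $\|\Lambda(\cdot)\|_{\mathcal F'}\in L^1_{\mathrm{loc}}(I)$ — so the interior estimates again give continuity of this term near $(t_0,x_0)$. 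Hence $u=v$ admits a continuous representative near $(t_0,x_0)$; as $(t_0,x_0)$ was arbitrary, $u$ is continuous on $I\times V$.

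\textbf{Main obstacle.}
The genuinely delicate point is the localization: extracting from the weak local information on $u$ a global weak solution $v=\psi\eta u$ with an explicit source and a usable Duhamel formula, and in particular handling the commutator $\Lambda$. Because $\Lambda(t)$ is a first-order object (an element of $\mathcal F'$, not a measure) supported near $\partial\tilde U$, controlling $(-P)^nH_r\Lambda$ on the far set $U^*$ by a quantity integrable — indeed super-polynomially small — as $r\to0^+$ requires marrying the off-diagonal estimate (iii) to interior energy estimates for caloric functions, both of which rest on the cut-off inequality (i); much of this apparatus, together with Lions' uniqueness theorem, is inherited from \cite{L2}. Conceptually, the feature that makes \emph{arbitrarily many} time derivatives controllable, and hence the full $W^{n,\infty}$ conclusion possible, is exactly the super-polynomial decay of $e^{-G(n,t)}$ as $t\to0^+$ built into assumption (iii).
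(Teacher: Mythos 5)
Your route is genuinely different from the paper's: you localize $u$ as $v=\psi\eta u$, identify a commutator source $\Lambda\in\mathcal F'$, and read regularity off a Duhamel representation, whereas the paper never forms a Duhamel formula at all — it builds the time-mollified, semigroup-smoothed approximants $\widetilde u_\tau(s,x)=\int_I\rho_\tau(s-t)H_{s-t}(\overline\eta^tu^t)(x)\,dt$, shows $\{\overline\psi\widetilde u_\tau\}$ is Cauchy in $L^\infty$ by bounding $\partial_\tau$ using the weak formulation with test function $\overline\eta v_\tau$, places the off-diagonal bound via strong locality and a ``hollow'' cut-off $\Phi$, handles time derivatives simply by citing \cite{L2} (each $\partial_t^ku$ is again a local weak solution, so the $n=0$ case applies to it), reduces local ultracontractivity to global by passing to the Dirichlet restriction form and domain monotonicity, and gets continuity by showing each $\widetilde u_\tau$ is continuous (only bounded continuous kernel against the bounded function $\overline\eta u$ is needed) plus uniform convergence, transferring kernel continuity to the restricted form via Dynkin's formula. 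However, as written your argument has two genuine gaps. First, the estimate you lean on for the commutator piece — $\|\partial_t^nH_r(\psi\Lambda(\tau))\|_{L^\infty(U^*)}\le\kappa_n(r)\|\Lambda(\tau)\|_{\mathcal F'}$ with $\kappa_n$ bounded (indeed super-polynomially small) as $r\to0^+$, and likewise the continuity of $\int_s^tH_{t-\tau}\Lambda(\tau)\,d\tau$ — is asserted, not proved. Hypothesis (iii) is an off-diagonal bound for $L^1$ data only; upgrading it to an $\mathcal F'$-source bound requires controlling the energy of $\partial_t^nH_r\varphi$ on a neighbourhood of $\mathrm{supp}\,\Lambda$ for $\varphi\in L^1(U^*)$, i.e.\ a Caccioppoli-type argument married to (iii); this is precisely the heart of the matter (it is what the paper's $B_\tau$ estimate does, via the gradient inequality and the cut-off $\Phi$), and deferring it to a one-line remark leaves the main step unproved. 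In the continuity part the same issue recurs: pointwise continuity and boundedness of $h$ on $(0,c)\times V\times V$ do not give continuity in $x$ of the pairing $\langle\Lambda(\tau),h(t-\tau,x,\cdot)\rangle_{\mathcal F',\mathcal F}$, which needs $\mathcal F$-norm continuity of the localized kernel.

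Second, you do not respect the fact that assumption (ii) is only \emph{local} ultracontractivity, i.e.\ a bound on $\|H_t\|_{L^2(\Omega)\to L^\infty(\Omega)}$ for sources supported in $\Omega$ and small $t$; because $H_t$ is a global operator this does not yield bounds on $\|H_r\|_{L^2(X)\to L^\infty(U^*)}$, nor bounds for $r$ beyond the small interval $(0,T_\Omega)$ (the semigroup splitting $H_r=H_{r/2}H_{r/2}$ produces intermediate functions no longer supported in $\Omega$ — exactly the subtlety the paper flags after Definition 4.1). Your step (b) invokes $\|(-P)^nH_r\|_{L^2(X)\to L^\infty(U^*)}$, including the case $n=0$, and justifies it by applying the $n=0$ case of the theorem to the caloric function $\rho\mapsto H_\rho\phi$; but your proof of the $n=0$ case itself runs through step (b) and hence through that very bound, so the claimed absence of circularity is not established. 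The paper breaks this circle by replacing $H_t$ with the semigroup of the restriction (Dirichlet-condition) form on a precompact set, transferring (ii) and (iii) by positivity/domain monotonicity, and verifying that $u$ remains a local weak solution for the restricted generator; some such reduction (or a shrinking of the time window together with a direct argument for the derivative bounds) is needed in your scheme and is currently missing.
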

Theorem \ref{modelthm} can be viewed as a continuation of the main result in the preceding paper \cite{L2}, where under the assumption of existence of nice cut-off functions (item (i) above) and the assumption of a very weak $L^2$\ Gaussian type upper bound (corresponding to but much weaker than item (iii)), we showed that any time derivative of any local weak solution of the heat equation is still a local weak solution (in particular, still locally belongs to the function space $L^2(I\rightarrow\mathcal{F})$). In this perspective, the result in \cite{L2} is an $L^2$-type result on time derivatives of local weak solutions; Theorem \ref{modelthm}, on the other hand, addresses the $L^\infty$-type properties of a local weak solution itself, and can be built on the $L^2$\ result to conclude $L^\infty$-type results for time derivatives of local weak solutions.

We also remark that our assumptions on the Dirichlet space have some overlaps with, but are overall distinct from the assumptions made in some papers (e.g. \cite{Sturm3,Lierlfractal,Lierl2}) that discuss the H\"{o}lder continuity property of local weak solutions of heat equations. In particular, we do not assume that the Dirichlet space satisfies the volume doubling property of any type, or the scale-invariant parabolic Harnack inequality. Our approach is via approximation using the heat semigroup, hence our assumptions are mainly on the semigroup. For the same reason, because we make much use of good properties of time derivatives of the heat semigroup, we have to restrict ourselves to time-independent Dirichlet forms.

A second goal of this paper is to illustrate how to draw similar conclusions for certain heat equation solutions in settings with very local assumptions. We present a general framework to conclude such local boundedness and continuity results from, where Theorem \ref{modelthm} is a special case. Essentially, suppose we have a class of functions on $I\times X$, where $I$\ is some open interval in $\mathbb{R}$. Let $\Omega\Subset X$\ be a precompact open subset. If there is a heat equation of which these functions are local weak solutions when restricted to the subset $I\times\Omega$, and if the heat equation is associated with some Dirichlet form on $\Omega$\ that satisfies the assumptions in Theorem \ref{modelthm} (i.e. the form is symmetric local regular and admits nice cut-off functions for any pair of precompact open subsets of $\Omega$; its corresponding semigroup is locally ultracontractive and satisfies some $L^\infty$\ off-diagonal upper bound in $\Omega$), then this class of functions locally belongs to $W^{n,\infty}(I\rightarrow L^\infty(\Omega))$, given that the right-hand side of the heat equation is locally in $W^{n,\infty}(I\rightarrow L^\infty(\Omega))$.

This general framework covers scenarios including heat equations associated with
\begin{itemize}
    \item locally uniformly elliptic divergence form second order differential operators with measurable, locally bounded coefficients on $\mathbb{R}^n$;
    \item certain divergence form operators on domains in $\mathbb{R}^n$\ with curl-free drifts that locally have potentials;
    \item certain heat equations on polyhedral complexes; in particular, we give examples of heat equations on one-dimensional complexes associated with locally finite graphs.
\end{itemize}
The key feature of the latter two examples is that only locally can the heat equation be interpreted as coming from a Dirichlet form for sure. We explain in more details these examples in Section \ref{frameworksection}.

This paper is organized as follows. After a quick review of basic notions and properties of Dirichlet forms that we use in this paper, in Section 2 we define local weak solutions and make precise the assumption on existence of nice cut-off functions. Section 3 contains the statement and proof of a simpler version of Theorem \ref{modelthm}, where we replace the assumption of local ultracontractivity of the heat semigroup by global ultracontractivity, and show that local weak solutions (and their time derivatives) are locally bounded or continuous. Next in Section 4 we formulate the general framework where such results hold and give some examples. It will be evident that Theorem \ref{modelthm} is a special case in this general framework. As applications of the main theorem, we show (1) the implication of the existence of a locally bounded density function from the local ultracontractivity property of the semigroup (in the presence of some technical hypotheses); (2) as a continuation of the application presented at the end of the preceeding paper \cite{L2}, we explain here the $L^\infty$-type structure results for ancient local weak solutions of the heat equation. These applications are the topic of Section 6. Lastly, in the Appendix, we discuss one case where the local ultracontractivity condition implies the $L^\infty$\ off-diagonal upper bound for the semigroup. For discussions of such implications under various assumptions in the literature, cf. e.g. \cite{kernel,anomalous}. In this paper the approach we present is built entirely on iterations of the $L^2$\ off-diagonal upper bound. It covers cases not covered in the existing literature.

\section{Local weak solutions and assumption on cut-off functions}
\setcounter{equation}{0}
Let $(X,m)$\ be a metric measure space where $m$\ is a Radon measure with full support and $X$\ is locally compact separable. In the rest of the paper we simply say $(X,m)$\ is a metric measure space and we do not assign a name for the metric. Let $(\mathcal{E},\mathcal{F})$\ be a Dirichlet form on $L^2(X,m)$; $\mathcal{F}$\ denotes the domain of $\mathcal{E}$. $(\mathcal{E},\mathcal{F})$\ equipped with the $\mathcal{E}_1$\ norm is a Hilbert space, where for any $f\in \mathcal{F}$,
\begin{eqnarray*}
||f||_{\mathcal{E}_1}=\left(\mathcal{E}_1(f,f)\right)^{1/2}=\left(||f||_{L^2(X,m)}^2+\mathcal{E}(f,f)\right)^{1/2}.
\end{eqnarray*}
Let $(H_t)_{t>0}$\ and $-P$\ be the self-adjoint semigroup and (infinitesimal) generator associated with $(\mathcal{E},\mathcal{F})$, respectively. Let $\mathcal{D}(P)$\ denote the domain of the generator $-P$. $\mathcal{D}(P)$\ is dense in $\mathcal{F}$\ w.r.t. the $\mathcal{E}_1$\ norm. For any $t>0$, $H_t$\ is a contraction on $L^2(X)$, and for any $k\in \mathbb{N}_+$,
\begin{eqnarray*}
||P^kH_t||_{L^2(X)\rightarrow L^2(X)}\leq (k/et)^k
\end{eqnarray*}
by the spectral theory. For any $u_0\in L^2(X)$, $H_tu_0$\ is smooth in $t$, and satisfies
\begin{eqnarray*}
\partial_tH_tu_0=-PH_tu_0
\end{eqnarray*}
in $L^2(X)$. In particular, $u(t,x):=H_tu_0(X)$\ solves the PDE
\begin{eqnarray*}
(\partial_t+P)u=0
\end{eqnarray*}
in the strong sense.

Recall the following properties that a Dirichlet form may or may not satisfy
\begin{itemize}
    \item $(\mathcal{E},\mathcal{F})$\ is called {\em local}, if for any $f,g\in\mathcal{F}$, $\mathcal{E}(f,g)=0$\ whenever $\mbox{supp}\{f\}$, $\mbox{supp}\{g\}$\ are disjoint and compact. 
    \item $(\mathcal{E},\mathcal{F})$\ is called {\em regular}, if $\mathcal{C}_c(X)\cap\mathcal{F}$\ is dense in $\mathcal{C}_c(X)$\ in the sup norm and dense in $\mathcal{F}$\ in the $\mathcal{E}_1$\ norm. Here $\mathcal{C}_c(X)$\ represents the set of continuous functions with compact support in $X$.
    \item Any symmetric regular local Dirichlet form admits the Beurling-Deny decomposition
    \begin{eqnarray*}
    \mathcal{E}(f,g)=\int_Xd\Gamma(f,g)+\int_Xfg\,dk,
    \end{eqnarray*}
    where $\Gamma$\ is called the energy measure and $k$\ the Killing measure. When there is no killing measure, the Dirichlet form is called {\em strongly local}, which can be equivalently characterized as satisfying that $\mathcal{E}(f,g)=0$\ for any $f,g\in\mathcal{F}$\ where one function equals to a constant (quasi-a.e.) on the support of the other function.
\end{itemize}
Finally we recall that the energy measure satisfies the following properties. We do not specify quasi-continuous modifications of functions.
\begin{itemize}
    \item (Leibniz rule) For any $u,v,w\in\mathcal{F}$\ with $uv\in\mathcal{F}$\ (e.g. when $u,v\in \mathcal{F}\cap L^\infty$),
    \begin{eqnarray*}
    d\Gamma(uv,w)=u\,d\Gamma(v,w)+v\,d\Gamma(u,w);
    \end{eqnarray*}
    \item (chain rule) For any $u,v\in\mathcal{F}$, any $\Phi\in C^1(\mathbb{R})$\ with bounded derivative and satisfies $\Phi(0)=0$,
    \begin{eqnarray*}
    d\Gamma(\Phi(u),v)=\Phi'(u)\,d\Gamma(u,v);
    \end{eqnarray*}
    \item (Cauchy-Schwartz inequality) For any $f,g,u,v\in \mathcal{F}\cap L^\infty$\ (more generally, when $u,v\in \mathcal{F}\cap L^\infty$\ and $f\in L^2(X,\Gamma(u,u))$, $g\in L^2(X,\Gamma(v,v))$),
    \begin{eqnarray*}
    \left|\int_X fg\,d\Gamma(u,v)\right|&\leq& \left(\int_Xf^2\,d\Gamma(u,u)\right)^{1/2}\left(\int_Xg^2\,d\Gamma(v,v)\right)^{1/2}\\
    &\leq& \frac{C}{2}\int_Xf^2\,d\Gamma(u,u)+\frac{1}{2C}\int_Xg^2\,d\Gamma(v,v).
    \end{eqnarray*}
The last inequality holds for any $C>0$. The corresponding inequality for measures is
\begin{eqnarray*}
|fg|\,d\left|\Gamma(u,v)\right|\leq \frac{C}{2}f^2\,d\Gamma(u,u)+\frac{1}{2C}g^2\,d\Gamma(v,v).
\end{eqnarray*}
    \item (strong locality) For any $u,v\in \mathcal{F}$, if on some open subset $U\Subset X$, $v\equiv C$\ for some constant $C$, then
    \begin{eqnarray*}
    1_U\,d\Gamma(u,v)=0.
    \end{eqnarray*}
\end{itemize}
See \cite{Fukushima} for detailed discussions on symmetric Dirichlet forms.

Let $(\mathcal{E},\mathcal{F})$\ be a symmetric regular local Dirichlet form. To define the notion of local weak solutions, we first define some function spaces associated with $(\mathcal{E},\mathcal{F})$. For any open subset $U\subset X$, any open interval $I\subset \mathbb{R}$, define
\begin{eqnarray*}
&&\mathcal{F}_c(U):=\left\{f\in\mathcal{F}:\mbox{supp}\{f\}\subset U\right\};\\[0.05in]
&&\mathcal{F}_{\scaleto{\mbox{loc}}{5pt}}(U):=\left\{f\in L^2_{\scaleto{\mbox{loc}}{5pt}}(U):\forall V\Subset U\ \exists f^\sharp\in\mathcal{F}\ \mbox{s.t. }f^\sharp=f\ m-\mbox{a.e. on }V\right\};\\
&&\mathcal{F}(I\times X):=L^2(I\rightarrow\mathcal{F})=\left\{f:||f||_{\mathcal{F}(I\times X)}=\left(\int_I\mathcal{E}_1(f,f)\,dt\right)^{1/2}<\infty\right\};\\
&&\mathcal{F}_c(I\times U):=\left\{f\in\mathcal{F}(I\times X):\mbox{supp}\{f\}\subset I\times U\right\};\\[0.05in]
&&\mathcal{F}_{\scaleto{\mbox{loc}}{5pt}}(I\times U):=\\
&&\left\{f:\forall J\times V\Subset I\times U\ \exists f^\sharp\in\mathcal{F}(I\times X)\ \mbox{s.t. }f^\sharp=f\ \mbox{a.e. on }J\times V\right\}.
\end{eqnarray*}
Note that $\mathcal{F}_c(I\times X)\subset \mathcal{F}(I\times X)\subset L^2(I\times X)\subset \mathcal{F}_{\scaleto{\mbox{loc}}{5pt}}(I\times X)\subset L^2_{\scaleto{\mbox{loc}}{5pt}}(I\times X)$. The same relation holds with $I\times X$\ replaced by $X$. We also consider function spaces including the space of compactly supported smooth functions from $I$\ to $\mathcal{F}$, $C_c^\infty(I\rightarrow \mathcal{F})$, and the index-$k$\ Sobolev space from $I$\ to $\mathcal{F}$, $W^{k,2}(I\rightarrow\mathcal{F})$. More details on such function spaces can be found in \cite{Wloka}.

Equating $L^2(X,m)=\left(L^2(X,m)\right)'$\ and let $\mathcal{F}'$\ denote the dual space of $\mathcal{F}$\ w.r.t. the $L^2$\ pairing. Then $\mathcal{F}\subset L^2(X)\subset \mathcal{F}'$, and 
\begin{eqnarray*}
\left(\mathcal{F}(I\times X)\right)'=\left(L^2(I\rightarrow\mathcal{F})\right)'=L^2(I\rightarrow\mathcal{F}').
\end{eqnarray*}
The ``$\sim_{\scaleto{\mbox{loc}}{5pt}}$'' spaces defined above can be viewed as on the dual space end.

We are now ready to define the notion of local weak solutions of the heat equation.
\begin{definition}[local weak solution]
Let $(X,m)$\ be a metric measure space and $(\mathcal{E},\mathcal{F})$\ be a symmetric local regular Dirichlet form on $L^2(X,m)$. Let $U\subset X$\ be an open subset, let $f$\ be a function locally in $L^2\left(I\rightarrow \mathcal{F}'\right)$. We say that $u$\ is a {\em local weak solution} of the heat equation $(\partial_t+P)u=f$\ on $I\times U$, if $u\in \mathcal{F}_{\scaleto{\mbox{loc}}{5pt}}\left(I\times U\right)$, and for any $\varphi\in \mathcal{F}_c\left(I\times U\right)\cap C_c^\infty\left(I\rightarrow \mathcal{F}\right)$,
	\begin{eqnarray}
	\label{localweaksol}
	-\int_I\int_X\ u\cdot \partial_t\varphi\,dm dt+\int_I\mathcal{E}(u,\varphi)\,dt\ =\int_I<f,\,\varphi>_{\scaleto{\mathcal{F}',\mathcal{F}}{5pt}}\,dt.
	\end{eqnarray}
\end{definition}
Here $u$\ in the integral is understood as $u^\sharp$\ as in the definition for $\mathcal{F}_{\scaleto{\mbox{loc}}{5pt}}(I\times U)$\ (relative to the support of $\varphi$). We take this convention throughout this paper. Note that $\mathcal{E}(u,\varphi)$\ is well-defined (independent of the choice of $u^\sharp$) by the local property of $\mathcal{E}$. $<\cdot,\cdot>_{\scaleto{\mathcal{F}',\mathcal{F}}{5pt}}$\ stands for the $(\mathcal{F}',\mathcal{F})$\ pairing.
\begin{remark}
Under the assumption on existence of nice cut-off functions below, any local weak solution $u$\ of $(\partial_t+P)u=f$\ is automatically in $W^{1,2}(I\rightarrow \mathcal{F}')$, cf. \cite{L2,Nate}.
\end{remark}
Lastly we state the assumption on existence of cut-off functions that we take throughout the paper. A cut-off function is any function in $\mathcal{F}\cap\mathcal{C}(X)$\ that is in between $0$\ and $1$.
\begin{assumption}[existence of nice cut-off functions]
\label{cutoff}
There exists some topological basis $\mathcal{TB}$\ of $X$\ so that for any $U,V\in\mathcal{TB}$\ with $V\Subset U\Subset X$, for any $0<C_1<1$, there exist a constant $C_2(C_1,V,U)>0$\ and a cut-off function $\eta$\ satisfying that $\eta\equiv 1$\ on $V$\ and $\mbox{supp}\{\eta\}\subset U$, such that for any function $f$\ in $\mathcal{F}$,
	 \begin{eqnarray}
	 \int_Xf^2\,d\Gamma(\eta,\eta)\leq C_1\int_X\eta^2\,d\Gamma(f,f)+C_2\int_{\scaleto{\mbox{supp}\{\eta\}}{5pt}}f^2\,dm.
	 \end{eqnarray}
Such a function $\eta$\ is called a {\em nice cut-off function} for the pair $V\subset U$\ corresponding to $C_1,C_2$.
\end{assumption}
We review a few properties of nice cut-off functions, details are provided in \cite{L2}.
\begin{itemize}
    \item Let $U$\ be any open subset of $X$. For any function $f\in \mathcal{F}_{\scaleto{\mbox{loc}}{5pt}}(U)$, any nice cut-off function $\eta$\ with $\mbox{supp}\{\eta\}\subset U$, the product $\eta f\in \mathcal{F}_c(U)$.
    \item Assumption \ref{cutoff} implies the existence of nice cut-off functions for any pair of open sets $V\Subset U\Subset X$, where $U,V$\ not necessarily belong to the topological basis $\mathcal{TB}$.
    \item Let $\eta$\ be a nice cut-off function corresponding to $C_1,C_2$. Suppose $0<C_1<\frac{1}{8}$. Then for any $f\in \mathcal{F}_{\scaleto{\mbox{loc}}{5pt}}(X)$, the following inequality holds
    \begin{eqnarray}
    \label{gradineq}
    \int_Xd\Gamma(\eta f,\,\eta f)\leq 2\int_Xd\Gamma(\eta^2f,\,f)+2C_2\int_{\scaleto{\mbox{supp}\{\eta\}}{5pt}} f^2\,dm.
    \end{eqnarray}
    We refer to this inequality as the {\em gradient inequality}.
\end{itemize}

\section{Local boundedness and continuity of local weak solutions under global assumptions}
\setcounter{equation}{0}
We discuss in this section the local boundedness and continuity properties of local weak solutions under relatively strong (global) assumptions. The proof of Theorem \ref{mainthm0} serves as the foundation for deducing the local boundedness property from various global or local assumptions discussed in this paper.
\label{globalsection}
\begin{theorem}
	\label{mainthm0}
	Let $(X,m)$\ be a metric measure space and $(\mathcal{E}, \mathcal{F})$\ be a symmetric regular local Dirichlet form satisfying Assumption \ref{cutoff} (existence of nice cut-off functions). Let $(H_t)_{t>0}$\ and $-P$\ be the associated semigroup and generator. Assume that $(H_t)_{t>0}$\ satisfies 
	\begin{itemize}
		\item the global ultracontractivity property: there is some continuous nonincreasing function $M(t):(0,T)\rightarrow \mathbb{R}_+$, $T>0$, such that
		\begin{eqnarray}
		\label{globalultra}
		\left|\left|H_t\right|\right|_{L^2(X)\rightarrow L^\infty(X)}\leq e^{M(t)};
		\end{eqnarray}
		\item the $L^\infty$\ off-diagonal upper bound: for any precompact open sets $V,W\Subset X$\ with $\overline{V}\cap \overline{W}=\emptyset$, for any $n\in \mathbb{N}$, there exists some positive function $G(V,W,n,t)=:G(n,t)$, continuous in $t\in (0,T)$, such that for any $v,w\in L^1(X)$\ with $\mbox{supp}\{v\}\subset V$, $\mbox{supp}\{w\}\subset W$, 
		\begin{eqnarray}
		\label{globalGaussian}
		\left|\int_X\partial_t^nH_tv\cdot w\,dm\right|\leq e^{-G(n,t)}\left|\left|v\right|\right|_{L^1}\left|\left|w\right|\right|_{L^1},
		\end{eqnarray}
		where $G(n,t)$\ satisfies that for any $a\geq 0$, any $n\in \mathbb{N}$,
        \begin{eqnarray}
        \label{mcontrolpoly}
        \lim_{t\rightarrow 0^+}\frac{1}{t^a}e^{-G(n,t)}<\infty.
        \end{eqnarray}
	\end{itemize}
	Given any open subsets $U\subset X$\ and $I=(a,b)\subset \mathbb{R}$, any function $f$\ that is locally in $W^{n,\infty}\left(I\rightarrow L^\infty(U)\right)$\ for some $n\in \mathbb{N}$, let $u$\ be a local weak solution of $(\partial_t+P)u=f$\ on $I\times U$. Then $u$\ is locally in $W^{n,\infty}(I\rightarrow L^\infty(U))$.
\end{theorem}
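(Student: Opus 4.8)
The plan is to reduce the statement to a semigroup‐approximation argument localized to small space‐time boxes. Fix a relatively compact product region $J\times V \Subset I\times U$ on which we want to show $u\in W^{n,\infty}(J\to L^\infty(V))$; by a covering and iteration argument it suffices to treat one such box. First I would use Assumption~\ref{cutoff} to choose a nice cut-off function $\eta$, equal to $1$ on $V$ and supported in a slightly larger open set $W\Subset U$, together with a time cut-off $\chi\in C_c^\infty(I)$ equal to $1$ on $J$. The function $v:=\chi\eta u$ is then globally defined, lies in $\mathcal{F}(I\times X)$ by the first listed property of nice cut-off functions, and — using the weak formulation \eqref{localweaksol}, the Leibniz rule, and the gradient inequality \eqref{gradineq} — one computes that $v$ is a \emph{global} weak solution of $(\partial_t+P)v = \tilde f$ on $I\times X$, where the new right-hand side $\tilde f$ collects the genuine source term $\chi\eta f$, the commutator terms coming from $\partial_t\chi$ and from $\Gamma(\eta,\cdot)$, and a first-order-in-$u$ term. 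The key quantitative point, to be extracted from \cite{L2} (the $L^2$ theory of time derivatives) together with the cut-off estimate, is that $\tilde f$ is locally in $W^{n,\infty}(I\to \mathcal{F}')$ with the $\mathcal F'$-norm controlled; more precisely $\tilde f$ is supported in $I\times W$ and, modulo lower-order corrections, is built from $f$ and from $u$ itself on the intermediate region, where $u$ is already known (by the $L^2$ result of \cite{L2}) to have time derivatives in $L^2_{\rm loc}(I\to\mathcal F)$.

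Next I would represent $v$ by the semigroup. Since $v$ is a global weak solution with $v(t)\to 0$ as $t\to a^+$ (by the cut-off in time), the Duhamel formula gives, for a.e.\ $t$,
\begin{eqnarray*}
v(t) \;=\; \int_a^t H_{t-s}\,\tilde f(s)\,ds,
\end{eqnarray*}
interpreted in $\mathcal F'$; this identity is justified by testing against $\varphi\in\mathcal F_c(I\times X)\cap C_c^\infty(I\to\mathcal F)$ and using self-adjointness of $H_s$ together with the smoothing $\partial_s H_s = -PH_s$. Now I would split the convolution according to where the source $\tilde f(s)$ is supported relative to the target point. Over the region where we evaluate (inside $V$), the source lives in $W$, and $W$ overlaps $V$ only in the annular shell between $\partial V$ and $\partial W$. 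Writing $\tilde f = \tilde f\cdot 1_{W\setminus V'} + \tilde f\cdot 1_{V'}$ for a slightly shrunken $V'$ with $V\Subset V'\Subset W$ does not immediately help since $\tilde f$ can be nonzero on all of $W$; instead the cleaner split is diagonal versus off-diagonal in the semigroup kernel. For the portion of the time integral with $t-s$ bounded below by a fixed $\delta>0$, global ultracontractivity \eqref{globalultra} converts the $L^2$ (indeed $\mathcal F'$) control of $\tilde f$ into an $L^\infty$ bound on that contribution. For the portion with $t-s<\delta$ small, I use the $L^\infty$ off-diagonal bound \eqref{globalGaussian}: the target region $V$ and the "far" part of $\mathrm{supp}\,\tilde f$ (the part of $W$ at positive distance from $V$, which after subtracting a $V$-neighborhood carries all of the problematic $u$-dependent terms if $\eta$ is chosen to be $1$ on a neighborhood of $\overline V$ — this is where I exploit that $\Gamma(\eta,\cdot)$ is supported in $W\setminus V$ by strong locality) are separated, so \eqref{globalGaussian} with the decay condition \eqref{mcontrolpoly} gives an integrable-in-$s$ bound near $s=t$; the genuinely diagonal part of the source, namely $\chi\eta f$ restricted near $V$, is already in $L^\infty$ and contributes a bounded term directly.

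Carrying this out for $n=0$ yields $u\in L^\infty_{\rm loc}(I\to L^\infty(V))$. For general $n$ I would differentiate Duhamel in $t$: $\partial_t^n v(t) = \sum_{k=0}^{n-1}\partial_t^{k}\tilde f$-boundary-type terms $+\int_a^t H_{t-s}\,\partial_s^n\tilde f(s)\,ds$, using $\partial_t H_t = -P H_t$ to trade time derivatives landing on the kernel for the off-diagonal bounds on $\partial_t^n H_t$ supplied in \eqref{globalGaussian} for all $n$, and using the hypothesis $f\in W^{n,\infty}_{\rm loc}$ plus the $L^2$ bootstrap from \cite{L2} to know $\partial_s^n\tilde f$ is again a legitimate, appropriately supported, $\mathcal F'$-valued (or $L^\infty$-valued, on the diagonal part) source. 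The same diagonal/off-diagonal split then bounds $\partial_t^n v$ in $L^\infty$ on $J\times V$, and since $v\equiv u$ there, we conclude $u\in W^{n,\infty}_{\rm loc}(I\to L^\infty(U))$.

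The main obstacle I anticipate is the bookkeeping in the off-diagonal estimate: one must arrange the cut-offs so that the part of the Duhamel source that is only controlled in $\mathcal F'$ (the commutator and lower-order terms, which involve $u$ and are not a priori bounded) is supported strictly away from the evaluation region, so that \eqref{globalGaussian}–\eqref{mcontrolpoly} absorb the singularity of the kernel as $s\to t$; while the part supported near the evaluation region (the genuine source $\chi\eta f$) must genuinely be in $L^\infty$. Making these two requirements compatible forces an iteration over a shrinking nested family of boxes $V\Subset V_1\Subset \cdots\Subset V_n\Subset U$, re-solving at each stage with the previous stage's $L^\infty$ (or, initially, $L^2$) information, and controlling the accumulation of constants $C_2(C_1,\cdot,\cdot)$ and of the functions $M,G$ over the finitely many steps — routine in principle but the place where care is needed. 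A secondary technical point is justifying the Duhamel representation and its time-differentiated form at the level of weak solutions in $\mathcal F'$, for which I would lean on the regularity $u\in W^{1,2}(I\to\mathcal F')$ noted in the Remark after the definition of local weak solution, together with the corresponding $L^2$ statements established in \cite{L2}.
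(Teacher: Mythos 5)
Your overall architecture (localize $u$ by a nice cut-off, exploit that the dangerous commutator terms live in an annulus away from the evaluation set by strong locality, use ultracontractivity \eqref{globalultra} for the non-singular part and the off-diagonal decay \eqref{globalGaussian}--\eqref{mcontrolpoly} near the time diagonal, and invoke \cite{L2} to reduce the $W^{n,\infty}$ claim to the $n=0$ case applied to $\partial_t^k u$) matches the paper's ingredients, but your mechanism is genuinely different: you write $v=\chi\eta u$ as a solution of a global Cauchy problem with an $\mathcal{F}'$-valued source and represent it by Duhamel, whereas the paper never forms a Duhamel integral; it builds the time-mollified family $\widetilde{u}_\tau(s,\cdot)=\int_I\rho_\tau(s-t)H_{s-t}(\overline{\eta}^t u^t)\,dt$, shows $\sup_{0<\tau<1}\|\partial_\tau(\overline{\psi}\widetilde{u}_\tau)\|_{L^\infty(I\times X)}<\infty$ by testing the weak formulation \eqref{localweaksol} against a semigroup-built test function, and identifies the $L^\infty$ limit with $\overline{\psi}u$ through the known $L^2$ convergence from \cite{L2}. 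The Duhamel route would additionally require a uniqueness statement for the inhomogeneous Cauchy problem with data in $L^2(I\rightarrow\mathcal{F}')$ (standard, via $u\in W^{1,2}(I\rightarrow\mathcal{F}')$), which the paper's approximation argument sidesteps.

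There is, however, one concrete step in your plan that does not close as stated. The hypothesis \eqref{globalGaussian} is an $L^1\times L^1$ bound on $\int\partial_t^nH_t v\cdot w\,dm$ for \emph{functions} $v,w$ with separated supports; but the far part of your Duhamel source is not a function: it is the first-order distribution $\varphi\mapsto\int\varphi\,d\Gamma(u^s,\eta)-\int u^s\,d\Gamma(\eta,\varphi)$. Pairing it with $H_{t-s}\varphi$ ($\varphi\in L^1(V)$) and applying Cauchy--Schwarz for $d\Gamma$ inevitably produces the energy measure of $H_{t-s}\varphi$ on the annulus where $\eta$ varies, and no kernel sup bound controls a restricted energy measure. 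If you instead bound it by the cut-off inequality of Assumption \ref{cutoff}, the term $\int\eta^2\,d\Gamma(H_{t-s}\varphi,H_{t-s}\varphi)$ reappears over all of $\mathrm{supp}\{\eta\}$, including $V$, and for $\varphi\in L^1(V)$ this is of size $(t-s)^{-1}e^{2M((t-s)/2)}\|\varphi\|_{L^1}^2$, which is not integrable as $s\rightarrow t$; so the split ``off-diagonal bound absorbs the singularity'' fails at this point as written. The missing idea is the localized energy estimate that the paper carries out in its $B_\tau$ term: insert a hollow cut-off $\Phi$ supported in the annulus (so $\mathrm{supp}\{\Phi\}\cap\mathrm{supp}\{\psi\}=\emptyset$), bound $\mathcal{E}(\Phi H_{t-s}\varphi,\Phi H_{t-s}\varphi)$ by the gradient inequality \eqref{gradineq} as $2\left|\int\Phi^2 H_{t-s}\varphi\cdot PH_{t-s}\varphi\,dm\right|+2C_2\int_{\mathrm{supp}\{\Phi\}}(H_{t-s}\varphi)^2\,dm$, and only then apply \eqref{globalGaussian} with $n=1$ (via $PH_r=-\partial_rH_r$) and $n=0$ together with \eqref{mcontrolpoly}. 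This is precisely why the off-diagonal hypothesis for $n\geq1$ is needed already for the $n=0$ statement of the theorem. With this lemma supplied, your Duhamel argument should go through; without it, the key estimate near the diagonal is not justified.
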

\begin{remark}
Theorem \ref{mainthm0} captures the main types of prerequisites we need in showing the local boundedness of a local weak solution and of its time derivatives. In later sections we explore the local nature of this theorem by relaxing all global requirements into local ones; we also present a more general framework and provide examples to show the versatility of the theorem. To further simplify the required conditions, in the Appendix we point out one case where the $L^\infty$\ off-diagonal upper bound follows automatically from the ultracontractivity condition.
\end{remark}
\begin{remark}
The condition $\left|\left|H_t\right|\right|_{\scaleto{L^2(X)\rightarrow L^\infty(X)}{6pt}}\leq e^{M(t)}$\ is equivalent to the condition $\left|\left|H_t\right|\right|_{\scaleto{L^1(X)\rightarrow L^\infty(X)}{6pt}}\leq e^{M_1(t)}$\ for some $M_1(t)$. We write $||\cdot||_{2\rightarrow \infty}$\ and $||\cdot||_{1\rightarrow \infty}$\ when there is no ambiguity. More precisely, the condition on $\left|\left|H_t\right|\right|_{2\rightarrow \infty}$\ implies the corresponding condition on $\left|\left|H_t\right|\right|_{1\rightarrow \infty}$\ by the self-adjointness of $H_t$\ and duality $\left|\left|H_t\right|\right|_{2\rightarrow \infty}=\left|\left|H_t\right|\right|_{1\rightarrow 2}$. Thus it suffices to take $M_1(t)\geq 2M(t/2)$. Conversely, assume the condition on $\left|\left|H_t\right|\right|_{1\rightarrow \infty}$, the condition on $\left|\left|H_t\right|\right|_{2\rightarrow \infty}$\ can be deduced by recalling $\left|\left|H_t\right|\right|_{\infty\rightarrow \infty}\leq 1$\ and using interpolation. So in this direction, $M(t)\geq M_1(t)/2$\ suffices. We denote the relation between $M(t)$\ and $M_1(t)$\ as $M(t)\simeq M_1(t)$.
\end{remark}
\begin{proof}[Proof of Theorem \ref{mainthm0}]
The proof of Theorem \ref{mainthm0} is similar in structure to its ``$L^2$\ counterpart'' in \cite{L2} (see Theorem 4.1 there), with some subtle differences. To show $u$\ is locally bounded in $I\times U$, the main strategy is to show that an approximate sequence $\{\overline{\psi}\widetilde{u}_\tau\}_{\tau>0}$, where $\overline{\psi}$\ is an arbitrary nice product cut-off function supported in $I\times U$\ and $\widetilde{u}_\tau$\ is defined by
\begin{eqnarray}
\label{approxseqdef}
\widetilde{u}_\tau(s,x):=\int_I \rho_\tau(s-t)H_{s-t}(\overline{\eta}^tu^t)(x)\,dt,
\end{eqnarray}
is a Cauchy sequence in $L^\infty(I\times X)$. Here $\rho_\tau(t)$, $\overline{\eta}(t,x)$\ are properly chosen cut-off functions, and we use the notation $v^t(x)$\ to represent $v(t,x)$, the function in $x$\ obtained by fixing $t$\ in any function $v(t,x)$. In classical settings, (\ref{approxseqdef}) is a convolution in space and time. Combining with the fact that the approximate sequence converges to $\overline{\psi} u$\ in $L^2(I\times X)$\ by Proposition 5.3 in \cite{L2}, it follows that $u$\ is locally in $L^\infty(I\times U)$. Repeating the proof for $u$\ on functions $\partial_t^ku$\ as local weak solutions of $(\partial_t+P)(\partial_t^ku)=\partial_t^kf$\ on $I\times U$, knowing that $\partial_t^kf\in L^\infty_{\scaleto{\mbox{loc}}{5pt}}(I\times U)$\ for $1\leq k\leq n$, then proves the claim that the time derivatives of $u$\ are locally in $L^\infty(I\times U)$. Here the fact that the time derivatives of $u$\ are local weak solutions is guaranteed by Theorem 4.1 in \cite{L2}. Alternatively, one can show directly that the approximate sequence $\{\overline{\psi}\widetilde{u}_\tau\}_\tau$\ is Cauchy in $W^{n,\infty}(I\rightarrow L^\infty(X))$. We choose the first approach to simplify the proof, only show that $u$\ itself is locally bounded.

We now define precisely the functions in (\ref{approxseqdef}). For any $J\times V\Subset I\times U$, pick two nice product cut-off functions $\overline{\eta}(t,x)=\eta(x)l(t)$\ and $\overline{\psi}(t,x)=\psi(x)w(t)$\ satisfying that
\begin{itemize}
    \item $\overline{\eta}\equiv 1$\ on $J_{\overline{\eta}}\times V_{\overline{\eta}}$, $\mbox{supp}\{\overline{\eta}\}\subset I_{\overline{\eta}}\times U_{\overline{\eta}}$;
    \item $\overline{\psi}\equiv 1$\ on $J_{\overline{\psi}}\times V_{\overline{\psi}}$, $\mbox{supp}\{\overline{\psi}\}\subset I_{\overline{\psi}}\times U_{\overline{\psi}}$;
    \item the open sets satisfy
\begin{eqnarray}
J\times V\Subset J_{\overline{\psi}}\times V_{\overline{\psi}}\Subset I_{\overline{\psi}}\times U_{\overline{\psi}}\Subset J_{\overline{\eta}}\times V_{\overline{\eta}}\Subset I_{\overline{\eta}}\times U_{\overline{\eta}}\Subset I\times U.
\end{eqnarray}
\end{itemize}
The function $\rho_\tau$\ in (\ref{approxseqdef}) is defined as $\rho_\tau(t):=\frac{1}{\tau}\rho(\frac{t}{\tau})$, for any $\tau>0$, where $\rho:\mathbb{R}\rightarrow \mathbb{R}_{\geq 0}$\ is some function in $C_c^\infty(1,2)$\ that satisfies $\int_\mathbb{R}\rho = 1$. In (\ref{approxseqdef}), for each $s\in I$, $\widetilde{u}_\tau(s,x)$\ is only nonzero when $\tau>0$\ is small enough.

To prove that $\{\overline{\psi}\widetilde{u}_\tau\}_{\tau>0}$\ is Cauchy in $L^\infty(I\times X)$, which then proves Theorem \ref{mainthm0} as in the discussion above, it suffices to prove the following statements
\begin{itemize}
    \item[(1)] $\overline{\psi}\widetilde{u}_\tau\in L^\infty(I\times X)$\ for any $\tau>0$;
    \item[(2)] $\sup_{0<\tau<1}\left|\left|\partial_\tau(\overline{\psi}\widetilde{u}_\tau)\right|\right|_{L^\infty(I\times X)}<\infty$.
\end{itemize}
The first statement holds since for any fixed $\tau>0$, by the ultracontractivity property (\ref{globalultra}) of the semigroup,
\begin{eqnarray}
\label{prepest}
\lefteqn{\left|\left|\overline{\psi}\widetilde{u}_\tau\right|\right|_{L^\infty(I\times X)}=\esssup_{s\in I}\left|\left|\overline{\psi}(s,\cdot)\int_I\rho_\tau(s-t)H_{s-t}(\overline{\eta}^tu^t)\,dt\right|\right|_{L^\infty(X)}}\notag\\
&\leq& ||\overline{\psi}||_{L^\infty(I\times X)}\cdot \esssup_{s\in I}\int_I\rho_\tau(s-t)\left|\left|H_{s-t}(\overline{\eta}^tu^t)\right|\right|_{L^\infty(X)}\,dt\notag\\
&\leq& ||\overline{\psi}||_{L^\infty(I\times X)}\cdot \esssup_{s\in I}\int_I\rho_\tau(s-t)e^{M(s-t)}\left|\left|\overline{\eta}^tu^t\right|\right|_{L^2(X)}\,dt\notag\\
&\leq& ||\overline{\psi}||_{L^\infty(I\times X)}||\rho_\tau||_{L^\infty(\mathbb{R})}e^{M(\tau)}|I|^{1/2}\left|\left|\overline{\eta}u\right|\right|_{L^2(I\times X)}<\infty.
\end{eqnarray}
To prove the second statement, we start with computing $\partial_\tau\widetilde{u}_\tau$. Note that $\partial_\tau\rho_\tau(t)=-\partial_t\overline{\rho}_\tau(t)$, where $\overline{\rho}_\tau(t):=\frac{t}{\tau^2}\rho(\frac{t}{\tau})$. Thus
\begin{eqnarray*}
	\partial_\tau \widetilde{u}_\tau(s,x)
	=\int_I\partial_t\overline{\rho}_\tau(s-t)\cdot H_{s-t}(\overline{\eta}^tu^t)(x)\,dt.
\end{eqnarray*}
On the other hand,
\begin{eqnarray*}
\lefteqn{\left|\left|\partial_\tau(\overline{\psi}\widetilde{u}_\tau)\right|\right|_{L^\infty(I\times X)}=\esssup_{s\in I}\left|\left|\partial_\tau(\overline{\psi}\widetilde{u}_\tau)\right|\right|_{L^\infty(X)}}\\
&=&\esssup_{s\in I}\sup_{\left|\left|\varphi\right|\right|_{L^1(X)}\leq 1} \int_X\overline{\psi}(s,x)\partial_\tau\widetilde{u}_\tau(s,x)\cdot \varphi(x)\,dm.
\end{eqnarray*}
After plugging in the expression for $\partial_\tau\widetilde{u}_\tau$, by the Fubini Theorem and the self-adjointness of the semigroup, we have
\begin{eqnarray}
\lefteqn{\left|\left|\partial_\tau(\overline{\psi}\widetilde{u}_\tau)\right|\right|_{L^\infty(I\times X)}}\notag\\
&=&\esssup_{s\in I}\sup_{\left|\left|\varphi\right|\right|_{L^1(X)}\leq 1} \int_I\int_X\overline{\eta}^tu^t\cdot \partial_t\overline{\rho}_\tau(s-t)w(s)H_{s-t}(\psi\varphi)\,dmdt\notag\\
&=&\esssup_{s\in I}\sup_{\left|\left|\varphi\right|\right|_{L^1(X)}\leq 1}\left\{\int_I\int_X\overline{\eta}^tu^t\cdot \partial_t[\overline{\rho}_\tau(s-t)w(s)H_{s-t}](\psi\varphi)\,dmdt\right.\notag\\
&&-\left.\int_I\int_X\overline{\eta}^tu^t\cdot \overline{\rho}_\tau(s-t)w(s)\partial_tH_{s-t}(\psi\varphi)\,dmdt\right\}.\label{split}
\end{eqnarray}
Let $v_\tau(s,t,x)$\ denote
\begin{eqnarray*}
v_\tau(s,t,x):=\overline{\rho}_\tau(s-t)w(s)\partial_tH_{s-t}(\psi\varphi)(x).
\end{eqnarray*}
Since $u$\ is a local weak solution of $(\partial_t+P)u=f$\ on $I\times U$, comparing (\ref{split}) and (\ref{localweaksol}) with $\varphi=\overline{\eta}v_\tau$\ leads to that
\begin{eqnarray*}
\left|\left|\partial_\tau(\overline{\psi}\widetilde{u}_\tau)\right|\right|_{L^\infty(I\times X)}=\esssup_{s\in I}\sup_{\left|\left|\varphi\right|\right|_{L^1(X)}}\left\{A_\tau(s,\tau)+B_\tau(s,\tau)+C_\tau(s,\tau)\right\},
\end{eqnarray*}
where
\begin{eqnarray*}
	A_\tau(s,\varphi)
	=-\int_I\int_Xu(t,x)\cdot \partial_t\left[\overline{\eta}(t,x)\right]\cdot v_\tau(s,t,x)\,dm(x)dt,
\end{eqnarray*}
\begin{eqnarray*}
	B_\tau(s,\varphi)
	=-\int_I\int_Xd\Gamma(\overline{\eta}^tu^t,\, v_\tau^{s,t})\,dt+\int_I\int_Xd\Gamma(u^t,\,\overline{\eta}^tv_\tau^{s,t})\,dt,
\end{eqnarray*}
\begin{eqnarray*}
	C_\tau(s,\varphi)= -\int_I\int_X f(t,x)\cdot \overline{\eta}(t,x)v_\tau(s,t,x)\,dm(x)dt.
\end{eqnarray*}
The killing measure parts in $B_\tau$\ are cancelled. Note that by (\ref{globalultra}) (global ultracontractivity of $H_t$), for any fixed $\tau>0$, $s\in I$, it is straightforward to verify that $v_\tau^s\in C^\infty(I\rightarrow \mathcal{F})$. By Lemma 3.5 in \cite{L2}, $\overline{\eta}v_\tau^s\in \mathcal{F}_c(I\times U)$.

We now estimate $A_\tau,B_\tau,C_\tau$. For
	\begin{eqnarray*}
		A_\tau(s,\varphi)
		=-\int_I\int_Xu(t,x)\cdot \partial_t[\overline{\eta}(t,x)]\cdot w(s)\overline{\rho}_\tau(s-t)H_{s-t}(\psi\varphi)(x)\,dm(x)dt,
	\end{eqnarray*}
	observe that when $\tau<c_0:=\min\left\{\frac{d(J_{\overline{\eta}}^c,\,I_{\overline{\psi}})}{2},\,\frac{1}{2}\right\}$, the product $\overline{\rho}_\tau(s-t)\cdot \partial_tl(t)\cdot w(s)\equiv 0$\ (recall that $\overline{\eta}(t,x)=l(t)\eta(x)$). Hence $A_\tau$\ is zero for $\tau$\ small. When $\tau\geq c_0$, note that
	\begin{eqnarray*}
	\lefteqn{\sup_{s,t\in I}\left\{\bar{\rho}_\tau(s-t)\left|\left|H_{s-t}(\psi\varphi)\right|\right|_{L^2(X)}\right\}}\\
	&\leq& \sup_{s,t\in I}\left\{\frac{s-t}{\tau^2}\rho\left(\frac{s-t}{\tau}\right)\left|\left|H_{s-t}\right|\right|_{L^1(X)\rightarrow L^2(X)}\left|\left|\psi\varphi\right|\right|_{L^1(X)}\right\}\\
	&\leq& \frac{2}{\tau}\left|\left|\rho\right|\right|_{L^\infty}e^{M(\tau)}\left|\left|\psi\varphi\right|\right|_{L^1(X)}\leq 2\left|\left|\psi\right|\right|_{L^\infty}\left|\left|\rho\right|\right|_{L^\infty}\frac{e^{M(c_0)}}{c_0}\left|\left|\varphi\right|\right|_{L^1(X)},
	\end{eqnarray*}
	hence
	\begin{eqnarray*}
		\lefteqn{|A_\tau(s,\varphi)|}\\
		&\leq& \left|\left|w\right|\right|_{L^\infty}\left|\left|\overline{\eta}\right|\right|_{C^1(I\rightarrow L^\infty(X))}\int_{I_{\overline{\eta}}}\ \left|\left|u^t\right|\right|_{L^2\left(U_{\overline{\eta}}\right)}\cdot\bar{\rho}_\tau(s-t)\left|\left|H_{s-t}(\psi\varphi)\right|\right|_{L^2(X)}\,dt\\
		&\leq& C(\overline{\eta},\overline{\psi},\rho)|I|^{1/2}\frac{e^{M(c_0)}}{c_0}\left|\left|\varphi\right|\right|_{L^1\left(X\right)}\left|\left|\overline{\Psi}u\right|\right|_{{L^2(I\times X)}}.
	\end{eqnarray*} 
	Here $\overline{\Psi}$\ is any nice product cut-off function supported in $I\times U$\ and equals $1$\ on the support of all other cut-off functions introduced in this proof. As $c_0=\min\left\{\frac{d(J_{\overline{\eta}}^c,\,I_{\overline{\psi}})}{2},\,\frac{1}{2}\right\}$\ depends only on $\overline{\eta},\overline{\psi}$, we conclude that for some constant $C_A(\overline{\eta},\overline{\psi},\rho)>0$,
	\begin{eqnarray*}
		\sup_{0<\tau<1}\esssup_{s\in I}\sup_{\left|\left|\varphi\right|\right|_{L^1(X)}\leq 1}|A_\tau(s, \varphi)|\leq C_A(\overline{\eta},\overline{\psi},\rho)\left|\left|\overline{\Psi}u\right|\right|_{L^2(I\times X)}.
	\end{eqnarray*}
	Next we estimate
	\begin{eqnarray*}
	B_\tau(s,\varphi)
	=-\int_I\int_Xd\Gamma(\overline{\eta}^tu^t,\, v_\tau^{s,t})\,dt+\int_I\int_Xd\Gamma(u^t,\,\overline{\eta}^tv_\tau^{s,t})\,dt.
\end{eqnarray*}
	By the strong locality of the energy measure $d\Gamma$, for any $t\in I$, on $V_{\overline{\eta}}$\ where $\eta\equiv 1$,
	\begin{eqnarray*}
	1_{V_{\overline{\eta}}}\,d\Gamma(\overline{\eta}^t u^t,\, v_\tau^{s,t})=1_{V_{\overline{\eta}}}\,d\Gamma(u^t,\,\overline{\eta}^tv_\tau^{s,t}).
	\end{eqnarray*}
	Therefore,
	\begin{eqnarray*}
		B_\tau(s,\varphi)
		=-\int_I\int_Xd\Gamma(\overline{\eta}^tu^t,\,\Phi v_\tau^{s,t})\,dt+\int_I\int_Xd\Gamma(u^t,\,\Phi\overline{\eta}^tv_\tau^{s,t})\,dt,
	\end{eqnarray*}
	where $\Phi$\ is a nice cut-off function which intuitively is ``hollow-shaped'', being $0$\ inside $V_{\overline{\eta}}$\ and being $1$\ over an open set that covers where $\eta$\ is nonzero and not $1$. More precisely, $\Phi\equiv 1$\ on $V_{\Phi}$\ and $\mbox{supp}\{\Phi\}\subset U_{\Phi}$, for sets $V_\Phi:=V''-\overline{U'}$, $U_\Phi:=U''-\overline{V'}$, where $V', U', V'', U''$\ satisfy
	\begin{eqnarray*}
	V_{\overline{\psi}}\Subset U_{\overline{\psi}}\Subset V'\Subset U'\Subset V_{\overline{\eta}}\Subset U_{\overline{\eta}}\Subset V''\Subset U''\Subset U.
	\end{eqnarray*}
	In particular, the supports of $\psi$\ and $\Phi$\ are disjoint.
	
	Now we use the Cauchy-Schwartz inequality for $d\Gamma$\ to get
	\begin{eqnarray*}
		\lefteqn{|B_\tau(s,\varphi)|
		\leq \left(\left|\left|\overline{\eta}u\right|\right|_{L^2(I\rightarrow \mathcal{F})}+ \left|\left|\overline{\Psi}u\right|\right|_{L^2(I\rightarrow \mathcal{F})}\right)\times}\\
	    &&\times \left[\left(\int_I\mathcal{E}(\Phi v_\tau^{s,t},\,\Phi v_\tau^{s,t})\,dt\right)^{1/2}+\left(\int_I\mathcal{E}(\Phi \overline{\eta}^tv_\tau^{s,t},\,\Phi \overline{\eta}^tv_\tau^{s,t})\,dt\right)^{1/2}\right].
	\end{eqnarray*}
	The estimates for the two terms in the square bracket are almost identical, we only give estimates for the second term as an example. Using the gradient inequality (\ref{gradineq}), and in the last line using the $L^\infty$\ off-diagonal upper bound (\ref{globalGaussian}), we have
	\begin{eqnarray*}
		\lefteqn{\int_I\mathcal{E}(\Phi \overline{\eta}^tv_\tau^{s,t},\,\Phi \overline{\eta}^tv_\tau^{s,t})\,dt}\\
		&\leq&  2\int_I\left|\int_X(\Phi \overline{\eta}^t)^2v_\tau^{s,t}\cdot Pv_\tau^{s,t}\,dm\right|dt+2C_2\int_I\int_X1_\Phi v_\tau^{s,t}\cdot v_\tau^{s,k}\,dmdt\\
		&=& 2\int_I\left|\int_X(\Phi \overline{\eta}^t)^2v_\tau^{s,t}\cdot w(s)\overline{\rho}_\tau(s-t)PH_{s-t}(\psi\varphi)\,dm\right|dt\\
		&&+2C_2\int_I\int_X1_\Phi v_\tau^{s,t}\cdot w(s)\overline{\rho}_\tau(s-t)H_{s-t}(\psi\varphi)\,dmdt\\
		&\leq& 2\left|\left|w\right|\right|_{L^\infty}\left|\left|\rho\right|\right|_{L^\infty}\sup_{\tau<r<2\tau}\left\{\frac{1}{\tau}e^{-G(1,r)}\right\}\int_I \left|\left|(\Phi\overline{\eta}^t)^2v_\tau^{s,t}\right|\right|_{L^1(X)}\left|\left|\psi\varphi\right|\right|_{L^1\left(X\right)}\,dt\\
		&&+2C_2\left|\left|w\right|\right|_{L^\infty}\left|\left|\rho\right|\right|_{L^\infty}\sup_{\tau<r<2\tau}\left\{\frac{1}{\tau}e^{-G(0,r)}\right\}\int_I \left|\left|1_\Phi v_\tau^{s,t}\right|\right|_{L^1(X)}\left|\left|\psi\varphi\right|\right|_{L^1\left(X\right)}\,dt.
	\end{eqnarray*}
	The term $\left|\left|(\Phi\overline{\eta}^t)^2v_\tau^{s,t}\right|\right|_{L^1(X)}$\ in the integral is bounded by
	\begin{eqnarray*}
		\lefteqn{\left|\left|(\Phi\overline{\eta}^t)^2v_\tau^{s,t}\right|\right|_{L^1(X)}}\\
		&\leq& \left|\left|w\right|\right|_{L^\infty}\left|\left|\rho\right|\right|_{L^\infty}\sup_{\tau<r<2\tau}\left\{\frac{1}{\tau}e^{-G(0,r)}\right\}\cdot \left|\left|\psi\varphi\right|\right|_{L^1(X)}\left|\left|(\Phi\overline{\eta}^t)^2\right|\right|_{L^1(X)};
	\end{eqnarray*}
	the term $\left|\left|1_\Phi v_\tau^{s,t}\right|\right|_{L^1(X)}$\ is similarly bounded.
	Hence
	\begin{eqnarray*}
		\lefteqn{\int_I\mathcal{E}(\Phi \overline{\eta}^tv_\tau^{s,t},\,\Phi \overline{\eta}^tv_\tau^{s,t})\,dt}\\
		&\leq& C(\overline{\psi},\overline{\eta},\rho,\Phi)\sup_{\tau<r<2\tau}\left\{\frac{1}{\tau}\left(e^{-G(0,r)}+e^{-G(1,r)}\right)\right\}^2\left|\left|\varphi\right|\right|_{L^1\left(X\right)}^2,
	\end{eqnarray*}
	where
	\begin{eqnarray*}
	C(\overline{\psi},\overline{\eta},\rho,\Phi):=2(1+C_2)||\overline{\psi}||^2_{L^\infty(I\times X)}||\rho||^2_{L^\infty}\left(1+\left|\left|(\Phi\overline{\eta})^2\right|\right|_{L^1(I\times X)}\right),
	\end{eqnarray*}
	and $C_2$\ is associated with $\Phi\overline{\eta}$.
	By (\ref{mcontrolpoly}),
	\begin{eqnarray*}
	\sup_{0<\tau<1}\sup_{\tau<r<2\tau}\left\{\frac{1}{\tau}\left(e^{-G(0,r)}+e^{-G(1,r)}\right)\right\}^2<+\infty,
	\end{eqnarray*}
	hence for some constant $C_B(\overline{\psi},\overline{\eta},\rho,\Phi)>0$,
	\begin{eqnarray*}
		\lefteqn{\sup_{0<\tau<1}\esssup_{s\in I}\sup_{\left|\left|\varphi\right|\right|_{L^1(X)}\leq 1}|B_\tau(s,\varphi)|}\\
		&\leq& C_B(\overline{\psi},\overline{\eta},\rho,\Phi)\cdot\left(\left|\left|\overline{\eta}u\right|\right|_{L^2(I\rightarrow \mathcal{F})}+ \left|\left|\overline{\Psi}u\right|\right|_{L^2(I\rightarrow \mathcal{F})}\right).
	\end{eqnarray*}

	Finally, by facts that $||H_{s-t}||_{L^1(X)\rightarrow L^1(X)}\leq 1$\ and $1\leq \int_I\overline{\rho}_\tau(s-t)\,dt\leq 2$, the term $|C_\tau(s,\varphi)|$\ satisfies
	\begin{eqnarray*}
		\lefteqn{|C_\tau(s, \varphi)|}\\
		&=& \left|\int_I\int_X f(t,x)\cdot \overline{\eta}(t,x)w(s)\overline{\rho}_\tau(s-t)H_{s-t}(\psi\varphi)(x)\,dm(x)dt\right|\\
		&\leq& 2||\overline{\psi}||_{L^\infty(I\times X)}||\overline{\eta}f||_{L^\infty(I\times X)}||\varphi||_{L^1(X)}.
	\end{eqnarray*}
Hence for $C_C(\overline{\psi}):=2||\overline{\psi}||_{L^\infty(I\times X)}$,
	\begin{eqnarray*}
		\sup_{0<\tau<1}\esssup_{s\in I}\sup_{\left|\left|\varphi\right|\right|_{L^1(X)}\leq 1}|C_\tau(s, \varphi)|\leq C_C(\overline{\psi})\left|\left|\overline{\eta}f\right|\right|_{L^\infty(I\times X)}.
	\end{eqnarray*}
	Putting the estimates for $A_\tau$, $B_\tau$, and $C_\tau$\ together, we get
\begin{eqnarray}
\label{derivativeest}
		\lefteqn{\sup_{0<\tau<1}\left|\left|\partial_\tau(\overline{\psi}\widetilde{u}_\tau)\right|\right|_{L^\infty(I\times X)}}\notag\\
		&\leq& \left(C_A(\overline{\eta},\overline{\psi},\rho)+C_B(\overline{\psi},\overline{\eta},\rho,\Phi)+C_C(\overline{\psi})\right)\notag\\
		&&\cdot \left(\left|\left|\overline{\eta}u\right|\right|_{L^2(I\rightarrow \mathcal{F})}+ \left|\left|\overline{\Psi}u\right|\right|_{L^2(I\rightarrow \mathcal{F})}+\left|\left|\overline{\eta}f\right|\right|_{L^\infty(I\times X)}\right)<\infty.
\end{eqnarray}
Hence $\{\overline{\psi}\widetilde{u}_\tau\}_{\tau>0}$\ is Cauchy in $L^\infty(I\times X)$. By the discussion at the beginning of the proof, it follows from (\ref{prepest})(\ref{derivativeest}) that
\begin{eqnarray}
\label{keyest}
\lefteqn{\left|\left|\overline{\psi}u\right|\right|_{L^\infty(I\times X)}}\notag\\
&\hspace{-.3in}\leq& \hspace{-.15in}C(\overline{\eta},\overline{\psi},\Phi,\rho) \left(\left|\left|\overline{\eta}u\right|\right|_{L^2(I\rightarrow \mathcal{F})}+ \left|\left|\overline{\Psi}u\right|\right|_{L^2(I\rightarrow\mathcal{F})}+\left|\left|\overline{\eta}f\right|\right|_{L^\infty(I\times X)}\right)
\end{eqnarray}
for some $C(\overline{\eta},\overline{\psi},\Phi,\rho)>0$. Hence $u\in L^\infty(J\times V)$. Since $J\times V\Subset I\times U$\ is arbitrary, $u\in L^\infty_{\scaleto{\mbox{loc}}{5pt}}(I\times U)$. By applying this result to $\partial_t^ku$\ as local weak solutions of $(\partial_t+P)\partial_t^ku=\partial_t^kf$, $1\leq k\leq n$, we conclude that $u$\ is locally in $W^{n,\infty}(I\rightarrow L^\infty(U))$.
\end{proof}
Note that in Theorem \ref{mainthm0}, by the global ultracontractivity condition, the semigroup $H_t$\ admits a density function, i.e., there is a measurable function $h(t,x,y)$\ on $(0,T)\times X\times X$\ such that for any $g\in L^2(X)$,
\begin{eqnarray*}
H_tg(x)=\int_Xh(t,x,y)g(y)\,dm(y)
\end{eqnarray*}
for any $t\in (0,T)$\ and a.e. $x$. Cf. \cite{DunfordPettis}. 
The global ultracontractivity condition in Theorem \ref{mainthm0} is equivalent to the condition
\begin{eqnarray*}
\esssup_{x,y\in X}h(t,x,y)\leq e^{M_1(t)},
\end{eqnarray*}
where $M_1(t)\simeq M(t)$.

In Theorem \ref{mainthm0}, if moreover the density $h(t,x,y)$\ is continuous on some $(0,c)\times V\times V$\ where $(0,c)\subset (0,T)$\ and $V\subset U$, then local weak solutions are continuous on $I\times V$\ as well. This is the following corollary.
\begin{corollary}
	\label{continuitythm}	
	Assume the hypotheses in Theorem \ref{mainthm0}. Suppose the density $h(t,x,y)$\ of the semigroup $(H_t)_{t>0}$\ is continuous on $(0,c)\times V\times V$\ for some $(0,c)\subset (0,T)$\ and $V\Subset U$. Then any local weak solution $u$\ of the heat equation $(\partial_t+P)u=f$\ on $I\times U$\ is continuous on $I\times V$.
\end{corollary}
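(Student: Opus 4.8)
The plan is to revisit the approximation scheme used in the proof of Theorem \ref{mainthm0}: there we showed that for suitable product cut-off functions the approximants $\overline{\psi}\widetilde{u}_\tau$, with $\widetilde{u}_\tau$ as in (\ref{approxseqdef}), form a Cauchy sequence in $L^\infty(I\times X)$ which, combined with the $L^2(I\times X)$ convergence $\overline{\psi}\widetilde{u}_\tau\to\overline{\psi}u$ from Proposition 5.3 in \cite{L2} (the two modes of convergence being compatible because all the functions involved are supported in one fixed precompact set), converges to $\overline{\psi}u$ in $L^\infty(I\times X)$. If we can show that each $\overline{\psi}\widetilde{u}_\tau$ has a representative continuous on $I\times V$ for $\tau$ small, then $\overline{\psi}u$, being a uniform limit of continuous functions, is continuous on $I\times V$, and on the open set where $\overline{\psi}\equiv1$ this gives the continuity of $u$ itself.

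Fix a point of $I\times V$ and a precompact neighborhood $J\times V_0\Subset I\times V$; it suffices to produce a continuous representative of $u$ on $J\times V_0$. I would choose all the cut-off functions $\overline{\eta}=\eta(x)l(t)$, $\overline{\psi}=\psi(x)w(t)$, $\Phi$, $\overline{\Psi}$ exactly as in the proof of Theorem \ref{mainthm0}, but with the additional requirement that all of their supports lie inside $I\times V$ (so in particular $\mbox{supp}\{\eta\}\Subset V$ and $\mbox{supp}\{\psi\}\Subset V$); this is possible since Assumption \ref{cutoff} yields nice cut-off functions for any pair of precompact open subsets and $V\subset U$. With this choice every estimate in the proof of Theorem \ref{mainthm0} still applies verbatim, so $\overline{\psi}\widetilde{u}_\tau\to\overline{\psi}u$ in $L^\infty(I\times X)$ as above.

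Next I would write out $\widetilde{u}_\tau$ using the density $h$: unwinding the semigroup in (\ref{approxseqdef}) and applying Fubini, one has for a.e. $(s,x)$
\begin{eqnarray*}
\widetilde{u}_\tau(s,x)=\int_I\int_X\rho_\tau(s-t)\,h(s-t,x,y)\,\overline{\eta}(t,y)\,u(t,y)\,dm(y)\,dt.
\end{eqnarray*}
Since $\rho\in C_c^\infty(1,2)$, the integrand vanishes unless $s-t\in(\tau,2\tau)$, so for $\tau<c/2$ the time argument of $h$ stays in $(0,c)$; meanwhile $x$ ranges over a compact subset of $V$ and $y$ over $\mbox{supp}\{\eta\}\Subset V$, so the arguments of $h$ remain in a compact subset of the region $(0,c)\times V\times V$ on which $h$ is continuous, hence bounded there. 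As $\overline{\eta}u\in L^2(I\times X)$ has support in a fixed precompact set it also lies in $L^1(I\times X)$, so for $(s,x)$ in a compact subset of $I\times V$ the integrand is dominated by a fixed $L^1(I\times X)$ function; the dominated convergence theorem then shows that the right-hand side is jointly continuous in $(s,x)\in I\times V$. Multiplying by the continuous cut-off $\overline{\psi}$ produces a continuous representative of $\overline{\psi}\widetilde{u}_\tau$ on $I\times V$.

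Letting $\tau\to0$ along a sequence with $\tau<c/2$, uniform convergence of continuous functions yields that $\overline{\psi}u$ has a representative continuous on $I\times V$; since $\overline{\psi}\equiv1$ on the open set $J_{\overline{\psi}}\times V_{\overline{\psi}}\supset J\times V_0$, the function $u$ has a continuous representative there, and as $J\times V_0\Subset I\times V$ was arbitrary, $u$ is continuous on $I\times V$. I expect the only genuinely delicate point to be the bookkeeping of the cut-off supports, which must all be compressed into $I\times V$ so that the space arguments of $h$ never leave $V$ and the time argument stays in $(0,c)$; everything else is a routine dominated-convergence argument grafted onto the estimates already established for Theorem \ref{mainthm0}.
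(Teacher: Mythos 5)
Your argument is correct and follows essentially the same route as the paper: approximate $\overline{\psi}u$ uniformly by $\overline{\psi}\widetilde{u}_\tau$ (with all cut-off supports compressed into $I\times V$ and $\tau<c/2$ so that the arguments of $h$ stay in a compact subset of $(0,c)\times V\times V$), show each approximant is continuous, and pass to the uniform limit. The only difference is cosmetic: you establish continuity of each $\widetilde{u}_\tau$ by dominated convergence using $\overline{\eta}u\in L^1$, whereas the paper does an explicit estimate splitting the time integral and invoking the uniform continuity of $h$ on a compact set together with $\left|\left|\overline{\eta}u\right|\right|_{L^\infty}<\infty$; both are valid.
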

\begin{remark}
More precisely, $u$\ being continuous on $I\times V$\ refers to that $u$\ has a continuous version on $I\times V$. In the following we adopt this convention.
\end{remark}
\begin{proof}
For any $J\times V'\Subset I\times V$, as in the proof of Theorem \ref{mainthm0}, we introduce the same nice product cut-off functions $\overline{\eta}$, $\overline{\psi}$\ accordingly. In this setting,
\begin{eqnarray*}
J\times V'\Subset J_{\overline{\psi}}\times V_{\overline{\psi}}\Subset I_{\overline{\psi}}\times U_{\overline{\psi}}\Subset J_{\overline{\eta}}\times V_{\overline{\eta}}\Subset I_{\overline{\eta}}\times U_{\overline{\eta}}\Subset I\times V.
\end{eqnarray*}
By Theorem \ref{mainthm0}, the approximate sequence $\{\overline{\psi}\widetilde{u}_\tau\}_{\tau>0}$\ converges to $\overline{\psi}u$\ in $L^\infty(I\times X)$, hence it suffices to show that each function $\overline{\psi}\widetilde{u}_\tau$\ is continuous on $J\times V'$. Take the version of $\widetilde{u}_\tau$\ given pointwisely by
\begin{eqnarray*}
\widetilde{u}_\tau(s,x)=\int_I\rho_\tau(s-t)\int_Xh(s-t,x,y)(\overline{\eta}^tu^t)(y)\,dm(y)dt.
\end{eqnarray*}
For any fixed $\tau\in (0,\frac{c}{2})$, for any two pairs of $(s,x),(s',x')\in J\times V'$, let $J_{s,\tau}:=(s-2\tau,s-\tau)$, $J_{s',\tau}:=(s'-2\tau,s'-\tau)$, then
	\begin{eqnarray}
		\lefteqn{|\widetilde{u}_\tau(s,x)-\widetilde{u}_\tau(s',x')|}\notag\\
		&\leq& \left|\int_I1_{J_{s,\tau}}(t)(\rho_\tau(s-t)-\rho_\tau(s'-t))H_{s-t}(\overline{\eta}^tu^t)(x)\,dt\right|+\left|\int_I\rho_\tau(s'-t)\times\right.\notag\\
		&&\left.\times\left(1_{J_{s,\tau}}(t)H_{s-t}(\overline{\eta}^tu^t)(x)-1_{J_{s',\tau}(t)}H_{s'-t}(\overline{\eta}^tu^t)(x')\right)dt\right|.\label{continuityest}
	\end{eqnarray}
	Since
	\begin{eqnarray*}
	|H_{s-t}(\overline{\eta}^tu^t)(x)|=|\int_Xh(s-t,x,y)\overline{\eta}(t,y)u(t,y)\,dm(y)dt|\leq \left|\left|\overline{\eta}u\right|\right|_{L^\infty(I\times X)},
	\end{eqnarray*}
	the first term in (\ref{continuityest}) is bounded by
	\begin{eqnarray*}
		\left|\left|\overline{\eta}u\right|\right|_{L^\infty(I\times X)}|I| \cdot \sup_{t\in I}|\rho_\tau(s-t)-\rho_\tau(s'-t)|, 
	\end{eqnarray*}
	which tends to $0$\ as $|s-s'|$\ tends to $0$\ as $\rho_\tau$\ is uniformly continuous and $\left|\left|\overline{\eta}u\right|\right|_{L^\infty(I\times X)}<\infty$\ by Theorem \ref{mainthm0}.
	
	Split the second term in (\ref{continuityest}) into integrals on $J_{s,\tau}\cap J_{s',\tau}$, $J_{s,\tau}\setminus J_{s',\tau}$, and $J_{s',\tau}\setminus J_{s,\tau}$. The first part satisfies
	\begin{eqnarray*}
		\lefteqn{\left|\int_{J_{s,\tau}\cap J_{s',\tau}}\rho_\tau(s'-t)\left(1_{J_{s,\tau}}(t)H_{s-t}(\overline{\eta}^tu^t)(x)-1_{J_{s',\tau}}(t)H_{s'-t}(\overline{\eta}^tu^t)(x')\right)dt\right|}\\
		&\leq&\left|\left|\overline{\eta}u\right|\right|_{L^\infty(I\times X)}m(U_{\overline{\eta}}) \sup_{\substack{\tau<s-t,\,s'-t<2\tau,\\y\in U_{\overline{\eta}}}}\left|h(s-t,x,y)-h(s'-t,x',y)\right|.
	\end{eqnarray*} 
	By the uniform continuity of the heat kernel $h(r,x,y)$\ on $(\tau,2\tau)\times V'\times U_{\overline{\eta}}\Subset (0,c)\times V\times V$, the upper bound tends to $0$\ as $|s-s'|$\ tends to $0$\ and the distance between $x$\ and $x'$\ tends to $0$.
	
	The other two parts are both bounded by $\left|\left|\overline{\eta}u\right|\right|_{L^\infty(I\times X)}|s-s'|$, which tends to $0$\ as $|s-s'|$\ tends to $0$.

	Hence the functions in the approximate sequence, $\overline{\psi}\widetilde{u}_\tau$\ for any $0<\tau<1$, are continuous on $J\times V'$. It follows that the limit $\overline{\psi}u$\ is continuous on $J\times V'$. By varying $J,V'$, we conclude that $u$\ is continuous on $I\times V$.
\end{proof}
\begin{remark}
When $X$\ is a compact group and the Dirichlet form corresponds to a convolution semigroup, the boundedness of the kernel implies the continuity of it. In such cases, the condition in Corollary \ref{continuitythm} is automatically satisfied once conditions in Theorem \ref{mainthm0} are met.
\end{remark}
\section{Generalized framework for the main theorem}
\label{frameworksection}
\setcounter{equation}{0}
There are many settings beyond Dirichlet form ones where a class of functions are local weak solutions of some heat equation when restricted to some local open set. One natural setting, a direct generalization of the Dirichlet form setting, is when globally there is a bilinear form that locally agrees with a Dirichlet form. Our goal in this section is to describe a general framework to draw Theorem \ref{mainthm0} type conclusions for heat equation solutions in broader settings. After revisiting the concept of local ultracontractivity, we first state the general framework (Theorem \ref{generalthm}). Then we make precise the special setting of bilinear forms locally represented by Dirichlet forms and state Theorem \ref{mainthm} (the version of Theorem \ref{generalthm} in that setting). We delay the proofs until the end of this section. The proofs for Theorems \ref{generalthm} and \ref{mainthm} are identical in nature, we only present the proof for Theorem \ref{mainthm}. Before that, we give two other examples to which the general framework (Theorem \ref{generalthm}) applies.
\subsection{Statement of the generalized framework}
In Section \ref{globalsection} we defined two properties that a semigroup may or may not satisfy - the global ultracontractivity property (\ref{globalultra}), and the $L^\infty$\ off-diagonal upper bound (\ref{globalGaussian})(\ref{mcontrolpoly}). As noted before, the global ultracontractivity property is equivalent to the existence of a globally (essentially) bounded density kernel. From this perspective it is evident that a more natural assumption is the existence of a locally bounded kernel, or the local ultracontractivity of the semigroup. In the next section we discuss the implication from one condition to the other. Here we first specify the property of local ultracontractivity. Such properties can be defined for general operators, here we restrict ourselves to semigroups corresponding to Dirichlet forms. 
\begin{definition}[local ultracontractivity]
\label{localultra}
Let $(X,m,\mathcal{E},\mathcal{F})$\ be a Dirichlet space with corresponding heat semigroup $(H_t)_{t>0}$\ on $L^2(X,m)$. For any precompact open subset $\Omega\subset X$, the semigroup is said to be locally ultracontractive on $\Omega$, if there exists some $T_\Omega>0$\ and some continuous nonincreasing function $M_\Omega:(0,T_\Omega)\rightarrow \mathbb{R}_+$, such that
	\begin{eqnarray}
	\label{localultraassumption}
	\left|\left|H_t\right|\right|_{L^2(\Omega)\rightarrow L^\infty(\Omega)}\leq e^{M_\Omega(t)}.
	\end{eqnarray}
$(H_t)_{t>0}$\ is said to satisfy the local ultracontractivity property, if it is locally untracontractive on any $\Omega\Subset X$.
\end{definition}
\begin{remark}
Unlike global ultracontractivity where the ``$2\rightarrow \infty$'' and ``$1\rightarrow \infty$'' conditions are equivalent (i.e.,
$||H_t||_{L^2(X)\rightarrow L^\infty(X)}$\ and $||H_t||_{L^1(X)\rightarrow L^\infty(X)}$\ being finite imply each other with explicit relations between the upper bounds), when the whole space $X$\ is replaced by some subset $\Omega\Subset X$\ in the above two operator norms, it is no longer clear if $||H_t||_{L^2(\Omega)\rightarrow L^\infty(\Omega)}<\infty$\ implies $||H_t||_{L^1(\Omega)\rightarrow L^\infty(\Omega)}<\infty$\ in general. For example, the previous method no longer works because the semigroup is a global operator: while by self-adjointness we still have $||H_t||_{L^2(\Omega)\rightarrow L^\infty(\Omega)}=||H_t||_{L^1(\Omega)\rightarrow L^2(\Omega)}$, breaking $||H_t||_{L^1(\Omega)\rightarrow L^\infty(\Omega)}$\ as $||H_{t/2}H_{t/2}||_{L^1(\Omega)\rightarrow L^\infty(\Omega)}$\ only leads to the upper bound
\begin{eqnarray*}
||H_t||_{L^1(\Omega)\rightarrow L^\infty(\Omega)}\leq ||H_{t/2}||_{L^1(\Omega)\rightarrow L^2(X)}||H_{t/2}||_{L^2(X)\rightarrow L^\infty(\Omega)}.
\end{eqnarray*}
In the definition above we take the possibly weaker condition (\ref{localultraassumption}).
\end{remark}

When $H_t$\ has a density kernel $h(t,x,y)$, the $L^\infty$\ off-diagonal upper bound (\ref{globalGaussian}) is equivalent to corresponding essential supremum bounds for $h(t,x,y)$
\begin{eqnarray*}
\esssup_{x\in V,\,y\in W}\left|\partial_t^nh(t,x,y)\right|\leq e^{-G(n,t)}.
\end{eqnarray*}

In practice, Theorem \ref{generalthm} below illustrates best how we draw conclusions of the local boundedness and continuity properties for heat equation solutions in more general settings.
\begin{theorem}[general framework]
\label{generalthm}
Let $X$\ be a metric measure space as before. Let $\widetilde{\mathcal{C}}$\ be a class of functions on $I\times X$, where $I\subset \mathbb{R}$. Let $(\mathcal{E}_\Omega,\mathcal{F}_\Omega)$\ be a symmetric local regular Dirichlet form on $L^2(\Omega,\mu_\Omega)$, where $\Omega\subset X$\ is an open subset and $\mu_\Omega$\ is some measure on $\Omega$. Assume that the Dirichlet form satisfies Assumption \ref{cutoff} (existence of nice cut-off functions); the associated semigroup $(H_t^\Omega)_{t>0}$\ is locally ultracontractive (\ref{localultraassumption}) and satisfies the $L^\infty$\ off-diagonal upper bound (\ref{globalGaussian})(\ref{mcontrolpoly}) on $\Omega$. Assume that all functions $u$\ in the class $\widetilde{\mathcal{C}}$\ have their restrictions on $I\times \Omega$\ being local weak solutions of the heat equation $(\partial_t+P_\Omega)u=f$, where $f$\ is locally in $W^{n,\infty}(I\rightarrow L^\infty(\Omega))$. Then all functions in $\widetilde{\mathcal{C}}$\ locally belong to $W^{n,\infty}(I\rightarrow L^\infty(\Omega))$.

Moreover, if $H_t^\Omega$\ admits a continuous density function on $(0,c)\times V\times V$\ for some interval $(0,c)\subset (0,T)$\ and open subset $V\subset\Omega$, then all functions in $\widetilde{\mathcal{C}}$\ are continuous on $I\times V$. Here $(0,T)$\ is the interval over which the ultracontractivity and off-diagonal bounds hold.
\end{theorem}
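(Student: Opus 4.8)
The plan is to recognize Theorem~\ref{generalthm} as the localized counterpart of Theorem~\ref{mainthm0} and to re-run the proof of the latter almost word for word, after observing that the global ultracontractivity hypothesis~(\ref{globalultra}) was used there only through consequences already furnished by the local ultracontractivity~(\ref{localultraassumption}). Since the conclusion is established for one solution at a time, the fact that $\widetilde{\mathcal{C}}$ is a whole class of functions is irrelevant: it suffices to fix $u\in\widetilde{\mathcal{C}}$, use that its restriction to $I\times\Omega$ is a local weak solution of $(\partial_t+P_\Omega)u=f$, and show $u$ is locally in $W^{n,\infty}(I\to L^\infty(\Omega))$. Two preliminary observations make this legitimate. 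First, $\Omega$, being open in the locally compact separable space $X$, is itself locally compact and separable and its precompact open subsets form a topological basis, so $(\Omega,\mu_\Omega,\mathcal{E}_\Omega,\mathcal{F}_\Omega)$ fits the framework considered above with $\Omega$ playing the role of $X$. Second, the $L^2$-results of \cite{L2}---that every time derivative of a local weak solution is again a local weak solution, and that the approximating sequence converges in $L^2_{\mathrm{loc}}$---apply verbatim to $(\Omega,\mu_\Omega,\mathcal{E}_\Omega,\mathcal{F}_\Omega)$, since the off-diagonal bound~(\ref{globalGaussian})(\ref{mcontrolpoly}) is stronger than the weak $L^2$ Gaussian-type upper bound required there.

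Fix $J\times V\Subset I\times\Omega$. I would introduce the same tower of nice product cut-off functions $\overline{\eta},\overline{\psi},\Phi,\overline{\Psi}$ over the same chain of nested precompact open sets as in the proof of Theorem~\ref{mainthm0}, now all contained in $\Omega$, together with one further precompact open set $\Omega_0\Subset\Omega$ that contains the supports of all of them. Define $\widetilde{u}_\tau$ by~(\ref{approxseqdef}) with $H_{s-t}=H^\Omega_{s-t}$, and aim, as before, to show that $\{\overline{\psi}\widetilde{u}_\tau\}_{\tau>0}$ is Cauchy in $L^\infty(I\times\Omega)$; combined with its $L^2_{\mathrm{loc}}$-convergence to $\overline{\psi}u$ this yields $u\in L^\infty(J\times V)$, and applying the same to $\partial_t^ku$ for $1\le k\le n$, which solve $(\partial_t+P_\Omega)\partial_t^ku=\partial_t^kf$, gives the full statement. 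The two claims to verify are again (1)~$\overline{\psi}\widetilde{u}_\tau\in L^\infty$ for each fixed $\tau$, and (2)~$\sup_{0<\tau<1}\|\partial_\tau(\overline{\psi}\widetilde{u}_\tau)\|_{L^\infty(I\times\Omega)}<\infty$; the crucial point is that ultracontractivity enters only in~(\ref{prepest}) and in the estimate of $A_\tau$, and that in both places the bound $\|H_r\|_{L^2(X)\to L^\infty(X)}\le e^{M(r)}$ (equivalently $\|H_r\|_{L^1(X)\to L^2(X)}\le e^{M(r)}$) is applied to a function supported in $\Omega_0$ and produces a norm needed only on $\Omega_0$---because every function in sight carries a factor $\overline{\psi}$, $\overline{\eta}$, $\Phi$ or $\psi$, all supported in precompact subsets of $\Omega$. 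Hence each such appeal may be replaced by $\|H_r\|_{L^2(\Omega_0)\to L^\infty(\Omega_0)}\le e^{M_{\Omega_0}(r)}$ from~(\ref{localultraassumption}). Two small adjustments are needed: in the estimate of $A_\tau$ the global norm $\|H_{s-t}(\psi\varphi)\|_{L^2(X)}$ should first be cut down to $\|1_{U_{\overline{\eta}}}H_{s-t}(\psi\varphi)\|_{L^2}$---legitimate since this quantity is only ever paired with $u^t$, supported in $U_{\overline{\eta}}$---and then bounded via self-adjointness and local ultracontractivity; and the off-diagonal bound invoked inside $B_\tau$ already concerns the pair $(\Phi\overline{\eta}^t)^2v_\tau^{s,t}$ and $\psi\varphi$, whose supports are disjoint and precompact in $\Omega$, so~(\ref{globalGaussian})(\ref{mcontrolpoly}) applies to it unchanged. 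All constants then depend only on the chosen cut-off functions, on $M_{\Omega_0}$, and on $G(\cdot,\cdot)$, and estimates of the form~(\ref{derivativeest}) and~(\ref{keyest}) come out without further work.

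The one step requiring genuine, if minor, care is the legitimacy of the test function $\varphi=\overline{\eta}v_\tau^s$: in Theorem~\ref{mainthm0} the fact that $\overline{\eta}v_\tau^s\in\mathcal{F}_c(I\times X)\cap C_c^\infty(I\to\mathcal{F})$ was deduced from global ultracontractivity through $H_r\colon L^1(X)\to L^2(X)$, which is no longer available. I would handle this by noting that in computing $\|\overline{\psi}(s,\cdot)\partial_\tau\widetilde{u}_\tau(s,\cdot)\|_{L^\infty(X)}$---a function which, for fixed $\tau$, is supported in the precompact set $U_{\overline{\psi}}$ and lies in $L^\infty$ by~(1)---it suffices to test against $\varphi\in C_c(U_{\overline{\psi}})$ with $\|\varphi\|_{L^1}\le1$. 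For such $\varphi$ one has $\psi\varphi\in L^1\cap L^\infty\subset L^2(X)$, since $U_{\overline{\psi}}$ has finite measure, hence $H_r(\psi\varphi)\in\bigcap_{k\ge1}\mathcal{D}(P^k)\subset\mathcal{F}$ and is smooth in $r$ by the spectral theorem, exactly as required; and since the ensuing bounds are linear in $\|\varphi\|_{L^1}$, they pass to the supremum over such $\varphi$ and recover the $L^\infty$ estimate.

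Finally, the continuity assertion is obtained exactly as in Corollary~\ref{continuitythm}. Assuming $H^\Omega_t$ has a density $h(t,x,y)$ continuous on $(0,c)\times V\times V$, for $J\times V'\Subset I\times V$ each approximant $\overline{\psi}\widetilde{u}_\tau$, written through the kernel, is continuous on $J\times V'$---the argument there uses $h$ only on $(\tau,2\tau)\times V'\times U_{\overline{\eta}}$ with $U_{\overline{\eta}}\Subset V$, together with the bound $\|\overline{\eta}u\|_{L^\infty(I\times\Omega)}<\infty$ already in hand---and uniform convergence $\overline{\psi}\widetilde{u}_\tau\to\overline{\psi}u$ then forces $u$ to be continuous on $I\times V$. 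I expect the main obstacle to be exactly this bookkeeping: tracking the nested precompact sets and confirming that, after the adjustments just described and the test-function fix, no genuinely global use of ultracontractivity survives---once that is checked, the remainder is a transcription of the proof of Theorem~\ref{mainthm0}.
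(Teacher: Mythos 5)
Your proposal is correct, but it takes a genuinely different route from the paper. The paper does not re-open the proof of Theorem \ref{mainthm0}: it reduces Theorem \ref{generalthm} (via Theorem \ref{mainthm}, whose proof it says is identical) to the global statement by passing to the restriction (killed) form $(\mathcal{E}_{\Omega,V},\mathcal{F}_{\Omega,V})$ on a precompact $V\Subset\Omega$. By positivity preservation and domain monotonicity, $0\le H^{\Omega,V}_tf\le H^\Omega_tf$ for $f\ge0$, so local ultracontractivity of $H^\Omega$ becomes \emph{global} ultracontractivity of $H^{\Omega,V}$, the off-diagonal bounds transfer (with Lemma 8.4 of \cite{L2} for the time derivatives), and Theorem \ref{mainthm0} applies as a black box; for the continuity assertion the paper must then transfer continuity from $h_\Omega$ to the killed kernel $h_{\Omega,W}$ via the Dynkin formula plus an equicontinuity argument, so that Corollary \ref{continuitythm} applies verbatim. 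You instead re-run the proofs of Theorem \ref{mainthm0} and Corollary \ref{continuitythm} directly for $H^\Omega$, observing that ultracontractivity is only ever applied to functions supported in a fixed precompact set with the output measured on a precompact set, and you correctly isolate and repair the one genuinely global use — the $L^1\to L^2$ mapping property needed to make $H_r(\psi\varphi)\in\mathcal{F}$ for merely integrable $\varphi$ — by restricting the duality to $\varphi\in C_c(U_{\overline{\psi}})$ in the $L^1$ unit ball, which indeed recovers the $L^\infty$ norm since $\overline{\psi}\,\partial_\tau\widetilde{u}_\tau(s,\cdot)$ is supported in $U_{\overline{\psi}}$; your localized bound $\|1_{\Omega_0}H_r1_{\Omega_0}\|_{L^1\to L^2}=\|1_{\Omega_0}H_r1_{\Omega_0}\|_{L^2\to L^\infty}$ is also sound and does not conflict with the paper's remark about local $1\to\infty$ versus $2\to\infty$ bounds. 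What each approach buys: the paper's is modular (it reuses Theorem \ref{mainthm0} and Corollary \ref{continuitythm} unchanged) but needs the restriction-form machinery and the Dynkin formula; yours is more self-contained, avoids killed semigroups and the kernel-transfer step entirely, and gives the continuity claim directly from the hypothesized continuity of $h_\Omega$. One small bookkeeping point you should make explicit: in the $A_\tau$ estimate for $\tau\ge c_0$ the localized function $M_{\Omega_0}$ may not be defined at times up to $2\tau$; this is harmless because $A_\tau\equiv0$ for $\tau<c_0$, so you may simply restrict to $\tau<\min\{c_0,\,T_{\Omega_0}/2\}$, which suffices for the Cauchy property as $\tau\to0^+$.
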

\subsection{Special case - bilinear form setting}
In this subsection we define what it means for a bilinear form to be locally represented by a Dirichlet form, and state the version of the main theorem in this setting. Theorems \ref{modelthm} and \ref{mainthm0} are included in this bilinear form setting.
\begin{definition}
Let $(X,m)$\ be a metric measure space. $(\mathcal{B},\widetilde{\mathcal{D}},\widetilde{\mathcal{D}}_0)$\ is called a bilinear triple, if
\begin{itemize}
    \item[(i)] $\widetilde{\mathcal{D}}_0\subset \widetilde{\mathcal{D}}\subset L^2_{\scaleto{\mbox{loc}}{5pt}}(X,m)$\ as vector spaces;
    \item[(ii)] $\mathcal{B}(f,g)$\ is defined for any $f\in \widetilde{\mathcal{D}}$, $g\in \widetilde{\mathcal{D}}_0$;
    \item[(iii)] $\mathcal{B}$\ is bilinear in the sense that for any fixed $f\in \widetilde{D}$, $\mathcal{B}(f,\cdot)$\ is linear on $\widetilde{\mathcal{D}}_0$; for any fixed $g\in \widetilde{\mathcal{D}}_0$, $\mathcal{B}(\cdot,g)$\ is linear on $\widetilde{\mathcal{D}}$.  
\end{itemize}
\end{definition}
\begin{definition}
\label{agreelocally}
Let $(\mathcal{B},\widetilde{\mathcal{D}},\widetilde{\mathcal{D}}_0)$\ be a bilinear triple. Let $\Omega$\ be an open subset of $X$. We say that $(\mathcal{B},\widetilde{\mathcal{D}},\widetilde{\mathcal{D}}_0)$\ is represented by a symmetric local regular Dirichlet form in $\Omega$, if there exist a measure $\mu_\Omega$\ on $\Omega$\ that is not necessarily the restriction of $m$\ on $\Omega$\ and a symmetric local regular Dirichlet form $(\mathcal{E}_\Omega,\mathcal{F}_\Omega)$\ on $L^2(\Omega,\mu_\Omega)$, such that 
\begin{itemize}
    \item[(i)] for any $f\in \widetilde{\mathcal{D}}$, the restriction $f\big|_\Omega\in \mathcal{F}_{\Omega,\,\scaleto{\mbox{loc}}{5pt}}$;
    \item[(ii)] $\mathcal{F}_{\Omega,\,c}\subset \widetilde{\mathcal{D}}_0$;
    \item[(iii)] for any $f\in \widetilde{\mathcal{D}}$, $g\in \mathcal{F}_{\Omega,\,c}$,
    \begin{eqnarray*}
    \mathcal{B}(f,g)=\mathcal{E}_\Omega\left(f\big|_\Omega,g\right).
    \end{eqnarray*}
\end{itemize}
\end{definition}
Here $\mathcal{F}_{\Omega,\,\scaleto{\mbox{loc}}{5pt}}=\mathcal{F}_{\Omega,\,\scaleto{\mbox{loc}}{5pt}}(\Omega)$, $\mathcal{F}_{\Omega,\,c}=\mathcal{F}_{\Omega,\,c}(\Omega)$\ denote the $\sim_{\scaleto{\mbox{loc}}{5pt}}$, $\sim_{c}$\ spaces associated with the Dirichlet form $(\mathcal{E}_\Omega,\mathcal{F}_\Omega)$.
\begin{remark}
It is clear that for any $U\Subset \Omega$, we also have $f\big|_U\in \mathcal{F}_{\Omega,\,\scaleto{\mbox{loc}}{5pt}}(U)$.
\end{remark}

In the setting of a global bilinear form on $X$\ locally represented by some symmetric local regular Dirichlet form, Theorem \ref{generalthm} can be restated as the theorem below.
\begin{theorem}
\label{mainthm}
Let $(X,m)$\ be a metric measure space. Let $\Omega\subset X$\ be an open subset and $I\subset \mathbb{R}$\ be an open interval. Let $(\mathcal{B},\widetilde{\mathcal{D}},\widetilde{\mathcal{D}}_0)$\ be a bilinear triple that is represented by a symmetric local regular Dirichlet form $(\mathcal{E}_\Omega,\mathcal{F}_\Omega)$\ in $\Omega$. The Dirichlet form $(\mathcal{E}_\Omega,\mathcal{F}_\Omega)$\ is on $L^2(\Omega,\mu_\Omega)$\ for some measure $\mu_\Omega$\ on $\Omega$. Let $(H_t^\Omega)_{t>0}$\ denote the semigroup associated with $(\mathcal{E}_\Omega,\mathcal{F}_\Omega)$. Suppose $(\mathcal{E}_\Omega,\mathcal{F}_\Omega)$\ satisfies Assumption \ref{cutoff} (existence of nice cut-off functions); the semigroup $(H_t^\Omega)_{t>0}$\ is locally ultracontractive (\ref{localultraassumption}), and satisfies the $L^\infty$\ off-diagonal upper bound (\ref{globalGaussian}) and (\ref{mcontrolpoly}) on $\Omega$. Fix any open subset $U\subset \Omega$. Let $f$\ be locally in  $W^{n,\infty}(I\rightarrow L^\infty(U,\mu_\Omega))$, where $n\in \mathbb{N}$. Here $\mu_\Omega$\ stands for the restriction of the measure $\mu_\Omega$\ on $U$. Then for any function $u$\ defined on $I\times X$, satisfying
\begin{itemize}
    \item for a.e. $t\in I$, $u(t,\cdot)\in \widetilde{\mathcal{D}}$;
    \item $u\big|_{I\times U}\in \mathcal{F}_{\Omega,\,\scaleto{\mbox{loc}}{5pt}}(I\times U)$;
    \item for any $\varphi\in C_c^\infty(I\rightarrow \mathcal{F}_{\Omega})\cap \mathcal{F}_{\Omega,\,c}(I\times U)$,
\begin{eqnarray*}
-\int_I\int_U u\cdot \partial_t\varphi\,d\mu_\Omega dt+\int_I\mathcal{B}(u,\varphi)\,dt=\int_I\int_U f\varphi\,d\mu_\Omega dt,
\end{eqnarray*}
\end{itemize}
$u$\ also locally belongs to $W^{n,\infty}(I\rightarrow L^\infty(U,\mu_\Omega))$.

Moreover, if $(H_t^\Omega)_{t>0}$\ admits a density kernel $h_\Omega(t,x,y)$\ that is continuous on $(0,c)\times V\times V$\ for some open subsets $(0,c)\subset (0,T)$\ and $V\subset U$, where $(0,T)$\ is the interval over which the ultracontractivity and off-diagonal bounds hold, then $u$\ is continuous on $I\times V$.
\end{theorem}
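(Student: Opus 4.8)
The plan is to deduce Theorem~\ref{mainthm} from the machinery already assembled for Theorem~\ref{mainthm0} and Corollary~\ref{continuitythm}, applied to the Dirichlet space $(\Omega,\mu_\Omega,\mathcal{E}_\Omega,\mathcal{F}_\Omega)$, the only genuinely new point being the passage from global to merely local ultracontractivity. \textbf{Step 1: reduction to a Dirichlet-form local weak solution.} I would first observe that the three conditions imposed on $u$ say precisely that the restriction of $u$ to $I\times U$ is a local weak solution, in the sense of the identity (\ref{localweaksol}), of $(\partial_t+P_\Omega)u=f$ on $I\times U$ for the Dirichlet space $(\Omega,\mu_\Omega,\mathcal{E}_\Omega,\mathcal{F}_\Omega)$. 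Indeed, any admissible test function $\varphi\in C_c^\infty(I\rightarrow\mathcal{F}_\Omega)\cap\mathcal{F}_{\Omega,c}(I\times U)$ satisfies $\varphi(t,\cdot)\in\mathcal{F}_{\Omega,c}(U)\subset\widetilde{\mathcal{D}}_0$ by Definition~\ref{agreelocally}(ii), and $u(t,\cdot)\in\widetilde{\mathcal{D}}$ with $u(t,\cdot)\big|_\Omega\in\mathcal{F}_{\Omega,\mathrm{loc}}$ by Definition~\ref{agreelocally}(i); hence Definition~\ref{agreelocally}(iii) gives $\mathcal{B}(u(t,\cdot),\varphi(t,\cdot))=\mathcal{E}_\Omega\big(u(t,\cdot)\big|_\Omega,\varphi(t,\cdot)\big)$, and substituting this into the assumed weak identity turns it into (\ref{localweaksol}) for $(\mathcal{E}_\Omega,P_\Omega)$, with right-hand side $f$ locally in $W^{n,\infty}(I\rightarrow L^\infty(U,\mu_\Omega))$ and therefore locally in $L^2(I\rightarrow\mathcal{F}_\Omega')$. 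After this the triple $(\mathcal{B},\widetilde{\mathcal{D}},\widetilde{\mathcal{D}}_0)$ disappears, and it suffices to establish the analogues of Theorem~\ref{mainthm0} and Corollary~\ref{continuitythm} for $(\Omega,\mu_\Omega,\mathcal{E}_\Omega,\mathcal{F}_\Omega)$ under Assumption~\ref{cutoff} and the off-diagonal bound (\ref{globalGaussian})(\ref{mcontrolpoly}), with global ultracontractivity weakened to the local ultracontractivity (\ref{localultraassumption}) on $\Omega$.

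\textbf{Step 2: from global to local ultracontractivity, and the rest.} I would rerun the proof of Theorem~\ref{mainthm0} almost verbatim for $(\Omega,\mu_\Omega,\mathcal{E}_\Omega,\mathcal{F}_\Omega)$. Fix $J\times V\Subset I\times U$, take the same nice product cut-offs $\overline{\eta},\overline{\psi},\Phi,\overline{\Psi}$ and mollifier $\rho_\tau\in C_c^\infty(1,2)$, form the approximants $\widetilde{u}_\tau$ of (\ref{approxseqdef}), and fix a precompact open $\Omega_0$ with $\Omega_0\Subset\Omega$ containing the closures of the supports of all the cut-offs. The key observation is that throughout that proof $H_t^\Omega$ is applied only to functions supported in such a precompact set ($\overline{\eta}^t u^t$ supported in $U_{\overline{\eta}}$, and $\psi\varphi$ supported in $U_{\overline{\psi}}$, for $\varphi\in L^1$), and that the estimates use only: (a) $\|H_t^\Omega g\|_{L^\infty(\Omega_0)}\le e^{M_{\Omega_0}(t)}\|g\|_{L^2(\Omega_0)}$, i.e.\ (\ref{localultraassumption}); (b) the dual bound $\|1_{\Omega_0}H_t^\Omega g\|_{L^2(\Omega_0)}\le e^{M_{\Omega_0}(t)}\|g\|_{L^1(\Omega_0)}$ for $g$ supported in $\Omega_0$, which follows from (a) by self-adjointness of $H_t^\Omega$ (test against $h\in L^2\cap L^\infty$ supported in $\Omega_0$ and move $H_t^\Omega$ across); (c) the contraction $\|H_t^\Omega\|_{L^1\to L^1}\le1$; and (d) the off-diagonal bound (\ref{globalGaussian})(\ref{mcontrolpoly}), invoked only between precompact sets with disjoint closures (the supports of $\psi$ and of the hollow cut-off $\Phi$). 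Hence every $e^{M(t)}$ in the global proof is replaced by $e^{M_{\Omega_0}(t)}$, the $A_\tau,B_\tau,C_\tau$ decomposition and all ensuing bounds survive, and $\{\overline{\psi}\widetilde{u}_\tau\}_\tau$ is Cauchy in $L^\infty(I\times\Omega)$; together with the $L^2$-convergence $\overline{\psi}\widetilde{u}_\tau\to\overline{\psi}u$ this gives $u\in L^\infty_{\mathrm{loc}}(I\times U)$, and applying the same result to $\partial_t^k u$ (a local weak solution of $(\partial_t+P_\Omega)\partial_t^k u=\partial_t^k f$ on $I\times U$ by Theorem~4.1 of \cite{L2}, with $\partial_t^k f\in L^\infty_{\mathrm{loc}}$, $1\le k\le n$) gives $u$ locally in $W^{n,\infty}(I\rightarrow L^\infty(U,\mu_\Omega))$. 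For the continuity statement I would copy Corollary~\ref{continuitythm}: choose the cut-offs so that $V'$ (for $J\times V'\Subset I\times V$) and all cut-off supports lie in a precompact subset of $V$, take the pointwise representative $\widetilde{u}_\tau(s,x)=\int_I\rho_\tau(s-t)\int_\Omega h_\Omega(s-t,x,y)(\overline{\eta}^t u^t)(y)\,d\mu_\Omega(y)\,dt$, and derive continuity of each $\overline{\psi}\widetilde{u}_\tau$ on $J\times V'$ from uniform continuity of $\rho_\tau$, uniform continuity of $h_\Omega$ on compact subsets of $(0,c)\times V\times V$ (the mollifier keeps $s-t$ in $(\tau,2\tau)\subset(0,c)$), and $\|\overline{\eta}u\|_{L^\infty(I\times\Omega)}<\infty$ from the first part; the $L^\infty$-convergence then yields a continuous version of $u$ on $I\times V$.

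\textbf{Main obstacle.} The conceptual step is Step~1, but it is short; the real labor is in Step~2 — checking that the global-to-local replacement is legitimate at every point, and in particular that the structural facts taken over from \cite{L2} (that $v_\tau^s\in C^\infty(I\rightarrow\mathcal{F}_\Omega)$ and $\overline{\eta}v_\tau^s\in\mathcal{F}_{\Omega,c}(I\times U)$, the gradient inequality (\ref{gradineq}) for the relevant cut-offs, and the $L^2$-approximation $\overline{\psi}\widetilde{u}_\tau\to\overline{\psi}u$) continue to hold under (\ref{localultraassumption}) and (\ref{globalGaussian}) alone. As the Remark after Definition~\ref{localultra} stresses, one must make sure no estimate secretly needs an $L^1(\Omega_0)\rightarrow L^\infty(\Omega_0)$ bound, which does \emph{not} follow from local ultracontractivity; the resolution is exactly that the semigroup is only ever applied to data compactly supported in $\Omega$, that $L^1(\Omega_0)\rightarrow L^2(\Omega_0)$ suffices and comes for free from self-adjointness, and that all long-range contributions are absorbed by the separately assumed off-diagonal bound (\ref{globalGaussian}).
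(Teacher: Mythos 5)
Your Step 1 is exactly the paper's reduction, and your inventory of which quantitative bounds the proof of Theorem \ref{mainthm0} consumes (local $2\to\infty$ on a fixed precompact $\Omega_0$, its dual local $1\to2$ bound, the $L^1$ contraction, and the off-diagonal bound between the supports of $\Phi$ and $\psi$) is essentially right. The gap is in the \emph{qualitative} part of Step 2, precisely the point you flag as the "main obstacle" but do not actually resolve: to identify $\partial_\tau(\overline{\psi}\widetilde{u}_\tau)$ with $A_\tau+B_\tau+C_\tau$ one must plug $\varphi=\overline{\eta}v_\tau$ into the weak formulation (\ref{localweaksol}), and this requires $v_\tau^{s,t}=\overline{\rho}_\tau(s-t)w(s)\partial_tH^\Omega_{s-t}(\psi\varphi)$ to lie in $\mathcal{F}_\Omega$ (indeed $v_\tau^s\in C^\infty(I\rightarrow\mathcal{F}_\Omega)$, so that $\overline{\eta}v_\tau^s\in\mathcal{F}_{\Omega,c}(I\times U)$). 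In the global proof this is exactly where ultracontractivity is used qualitatively: $\varphi$ is only in $L^1$, and one needs $H^\Omega_r(\psi\varphi)\in L^2(\Omega,\mu_\Omega)$, i.e.\ a global $L^1\to L^2$ smoothing, to land in $\mathcal{D}(P_\Omega)\subset\mathcal{F}_\Omega$. Your substitute (b), $\|1_{\Omega_0}H^\Omega_tg\|_{L^2}\le e^{M_{\Omega_0}(t)}\|g\|_{L^1(\Omega_0)}$, does follow by self-adjointness, but it only controls the part of $H^\Omega_t(\psi\varphi)$ inside $\Omega_0$; it does not put $H^\Omega_t(\psi\varphi)$ into $L^2(\Omega,\mu_\Omega)$, and the off-diagonal bound cannot supply the missing tail either (it is assumed only for pairs of precompact sets, and an $L^\infty$ tail estimate is not an $L^2$ estimate when $\mu_\Omega(\Omega)=\infty$). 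So "rerun the proof verbatim" breaks at this step. The gap is repairable — for instance, compute the essential supremum over $\varphi\in L^1\cap L^2$ with $\|\varphi\|_{L^1}\le1$ (sufficient, by density of simple functions in $L^1$), in which case $\psi\varphi\in L^2$ is compactly supported and $H^\Omega_r(\psi\varphi)\in\mathcal{D}(P_\Omega^\infty)\subset\mathcal{F}_\Omega$ with no ultracontractivity needed at all — but this has to be said, and your stated resolution does not do it.

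It is also worth noting that the paper takes a different and in some ways cheaper route around exactly this issue: instead of reworking Theorem \ref{mainthm0} under (\ref{localultraassumption}), it passes to the restriction form on a precompact $V\Subset\Omega$ (Example \ref{restrictionformeg}); by domination $0\le H_t^{\Omega,V}f\le H^\Omega_tf$ for $f\ge0$ supported in $V$, local ultracontractivity of $H^\Omega$ becomes \emph{global} ultracontractivity of $H^{\Omega,V}$, the off-diagonal bounds (\ref{globalGaussian})(\ref{mcontrolpoly}) are inherited (time derivatives as in Lemma 8.4 of \cite{L2}), $u|_{I\times V}$ is still a local weak solution for the restricted generator, and Theorem \ref{mainthm0} applies verbatim. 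The price appears in the continuity statement: Corollary \ref{continuitythm} then involves the restricted kernel $h_{\Omega,W}$, so the paper must transfer continuity from $h_\Omega$ to $h_{\Omega,W}$ via the Dynkin formula and an equicontinuity argument — a nontrivial extra step that your direct approach (once the membership gap above is patched, and granting that the hypothesized density $h_\Omega$ is globally defined with sub-Markovian mass, which is what the pointwise representation of $\widetilde{u}_\tau$ in Corollary \ref{continuitythm} uses) would avoid. As submitted, however, the argument has a genuine hole at the test-function step, which is precisely the point the Remark after Definition \ref{localultra} warns about.
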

We first look at a familiar example viewed in this perspective.
\begin{example}[restriction form]
\label{restrictionformeg}
Let $(\mathcal{E},\mathcal{F})$\ be a symmetric local regular Dirichlet form on $X$. For any precompact open subset $\Omega\Subset X$, let $(\mathcal{E}^\Omega_0,\mathcal{F}^\Omega_0)$\ be the Dirichlet form obtained by considering the restriction of $\mathcal{E}$\ on $\mathcal{F}_c(\Omega)$\ first, then completing $\mathcal{F}_c(\Omega)$\ w.r.t. the $\mathcal{E}_1$\ norm to get $\mathcal{F}^\Omega_0$, the domain of $\mathcal{E}^\Omega_0$. It is known that the so-obtained form $(\mathcal{E}^\Omega_0,\mathcal{F}^\Omega_0)$\ is still symmetric, local, and regular. We call the Dirichlet form defined this way {\em the restriction form of $(\mathcal{E},\mathcal{F})$\ on $\Omega$}. In the terminology introduced in the definitions above, $(\mathcal{E},\mathcal{F}_{\scaleto{\mbox{loc}}{5pt}}(X),\mathcal{F}_c(X))$, $(\mathcal{E},\mathcal{F},\mathcal{F})$\ are both bilinear triples that are represented by the symmetric local regular Dirichlet form $(\mathcal{E}^\Omega_0,\mathcal{F}^\Omega_0)$\ in $\Omega$.

Moreover, if the Dirichlet form $(\mathcal{E},\mathcal{F})$\ satisfies Assumption \ref{cutoff} (existence of nice cut-off functions), then for any $V\subset \Omega$, the function spaces defined out of $\mathcal{F}$\ or $\mathcal{F}_\Omega$\ are identical, namely
\begin{itemize}
    \item $\mathcal{F}_c(V)=\mathcal{F}_{\Omega,\,c}(V)$, $\mathcal{F}_{\scaleto{\mbox{loc}}{5pt}}(V)=\mathcal{F}_{\Omega,\,\scaleto{\mbox{loc}}{5pt}}(V)$;
    \item for any $I\subset \mathbb{R}$, $\mathcal{F}_c(I\times V)=\mathcal{F}_{\Omega,\,c}(I\times V)$, $\mathcal{F}_{\scaleto{\mbox{loc}}{5pt}}(I\times V)=\mathcal{F}_{\Omega,\,\scaleto{\mbox{loc}}{5pt}}(I\times V)$;
    \item $\mathcal{F}_c(I\times V)\cap C_c^\infty(I\rightarrow \mathcal{F})=\mathcal{F}_{\Omega,\,c}(I\times V)\cap C_c^\infty(I\rightarrow \mathcal{F}_\Omega)$.
\end{itemize}
In particular, for any $f\in L^2_{\scaleto{\mbox{loc}}{5pt}}(I\times \Omega)$, the above discussion implies that the set of local weak solutions of $(\partial_t+P)u=f$\ on $I\times V$\ agrees with the set of local weak solutions of $(\partial_t+P_\Omega)u=f$\ on $I\times V$. Here $-P$\ and $-P_\Omega$\ stand for the generators of $(\mathcal{E},\mathcal{F})$\ and $(\mathcal{E}^\Omega_0,\mathcal{F}^\Omega_0)$, respectively.
\end{example}
\begin{example}
\label{unifellip}
Theorem \ref{mainthm} applied to the special case of Example \ref{restrictionformeg} gives exactly Theorem \ref{modelthm}. We illustrate it in the concrete example of bilinear forms associated with locally uniformly elliptic divergence form operators on $\mathbb{R}^n$. More precisely, let $m(x)dx$\ be a measure on $\mathbb{R}^n$\ with positive locally (essentially) bounded density $m(x)$, thus it is locally equivalent to the Lebesgue measure. Let $A:=\left( a_{ij}(x) \right)_{n\times n}$\ be a symmetric coefficient matrix with entries $a_{ij}(x)$\ being measurable locally (essentially) bounded functions on $\mathbb{R}^n$. Assume the matrix satisfies the local uniform ellipticity condition: for any $\Omega\Subset \mathbb{R}^n$, there exist some $0<c_\Omega<C_\Omega<\infty$, such that for a.e. $x\in \Omega$\ and any $\xi=(\xi_1,\cdots,\xi_n)\in \mathbb{R}^n$,
\begin{eqnarray}
\label{localuniformellipticity}
c_\Omega\sum_{i=1}^{n}\xi_i^2\leq \sum_{i,j=1}^{n}a_{ij}(x)\xi_i\xi_j\leq C_\Omega \sum_{i=1}^{n}\xi_i^2.
\end{eqnarray}
The bilinear form $\mathcal{B}_A$\ associated with $\left( a_{ij}(x) \right)_{n\times n}$\ is well-defined for smooth compactly supported functions as
\begin{eqnarray}
\mathcal{B}_A(f,g)=\int_{\mathbb{R}^n}\sum_{i,j=1}^n a_{ij}(x)\partial_{x_i}f(x)\partial_{x_j}g(x)\,m(x)dx.
\end{eqnarray}
By the local uniform ellipticity condition, it can be shown that $\mathcal{B}_A$\ is closable and the so-obtained Dirichlet form and its corresponding semigroup satisfy the conditions in Theorem \ref{mainthm} or Theorem \ref{modelthm}, cf. \cite{Ma,unifelliptic}. Consider the class of functions $\widetilde{C}$\ made up of functions $u$\ in $H_{\scaleto{\mbox{loc}}{5pt}}^1(I\times\mathbb{R}^n)$\ satisfying that for any $\varphi\in C_c^\infty(I\times\mathbb{R}^n)$,
\begin{eqnarray*}
\int_I\int_X(\partial_tu)\varphi\,dxdt+\int_I\mathcal{B}_A(u,\varphi)\,dt=0.
\end{eqnarray*}
Then these functions belong to $C^\infty(I\rightarrow C(\mathbb{R}^n))$. They are also locally in $W^{\infty,2}(I\rightarrow H^1(\mathbb{R}^n))\subset C^\infty(I\rightarrow H^1(\mathbb{R}^n))$\ by Theorem 4.1 in \cite{L2}.
\end{example}
\begin{example}
More generally, any Dirichlet form $(\mathcal{E},\mathcal{F})$\ on $L^2(X,m)$\ can be viewed as a bilinear triple with $\mathcal{B}=\mathcal{E}$, $\widetilde{\mathcal{D}}=\mathcal{F}_{\scaleto{\mbox{loc}}{5pt}}(X)$, and $\widetilde{\mathcal{D}}_0=\mathcal{F}_c(X)$. If on some $\Omega\Subset X$, $(\mathcal{E},\mathcal{F}_{\scaleto{\mbox{loc}}{5pt}}(X),\mathcal{F}_c(X))$\ is represented by a symmetric local regular Dirichlet form $(\mathcal{E}_\Omega,\mathcal{F}_\Omega)$\ that satisfies the conditions in Theorem \ref{mainthm}, then the theorem applies.

Note that when the Dirichlet form $(\mathcal{E},\mathcal{F})$\ is not local, there is not a good notion of local weak solutions for the associated heat equation $(\partial_t+P)u=0$\ in general. On the other hand, when the form agrees with a symmetric local regular Dirichlet form locally (i.e. on some $\Omega\Subset X$\ as in the sense of Definition \ref{agreelocally}), and when $(\mathcal{E}_\Omega,\mathcal{F}_\Omega)$\ satisfies Assumption \ref{cutoff} (existence of nice cut-off functions), one can check that for any $V\subset \Omega$, $I\subset \mathbb{R}$,
\begin{itemize}
    \item $\mathcal{F}_c(I\times V)=\mathcal{F}_{\Omega,\,c}(I\times V)$, $\mathcal{F}_{\scaleto{\mbox{loc}}{5pt}}(I\times V)=\mathcal{F}_{\Omega,\,\scaleto{\mbox{loc}}{5pt}}(I\times V)$;
    \item $\mathcal{F}_c(I\times V)\cap C_c^\infty(I\rightarrow \mathcal{F})=\mathcal{F}_{\Omega,\,c}(I\times V)\cap C_c^\infty(I\rightarrow \mathcal{F}_\Omega)$.
\end{itemize}
We verify the first item below. Hence there is a natural notion of local weak solutions of the heat equation $(\partial_t+P)u=0$\ on $I\times V$\ for any open subset $V\subset \Omega$\ (or with appropriate, nonzero right-hand side).

This type of examples includes 
\begin{itemize}
    \item symmetric local Dirichlet forms with any boundary condition (so that they may or may not be regular on the whole space);
    \item possibly nonlocal Dirichlet forms with jump measure $J(dxdy)$\ supported away from some open set $\Omega$, i.e., $J(dxdy)=1_{\Omega^c\times\Omega^c}J(dxdy)$. This condition is captured by the third item in Definition \ref{agreelocally}.
\end{itemize}
Proof of $\mathcal{F}_c(I\times V)=\mathcal{F}_{\Omega,\,c}(I\times V)$. 
\begin{itemize}
    \item $\mathcal{F}_c(I\times V)\subset\mathcal{F}_{\Omega,\,c}(I\times V)$: for any $v\in \mathcal{F}_c(I\times V)$, 
\begin{itemize}
    \item $v\in L^2(I\rightarrow \mathcal{F})$\ and $v$\ has compact support in $I\times V$. In particular, for a.e. $t\in I$, $v(t,\cdot)=v^t(\cdot)\in \mathcal{F}$, and $v^t|_\Omega=v^t$. Hence $v^t\in \mathcal{F}_{\Omega,\,\scaleto{\mbox{loc}}{5pt}}$. Moreover, $v^t$\ is compactly supported in $V\subset \Omega$, hence $v^t\in \mathcal{F}_{\Omega,\,c}$.
    \item By the previous item and (iii) in Definition \ref{agreelocally}, $\mathcal{E}(v^t,v^t)=\mathcal{E}_\Omega(v^t,v^t)$. Integrating over $t$\ gives $\int_I\mathcal{E}_\Omega(v^t,v^t)\,dt=\int_I\mathcal{E}(v^t,v^t)\,dt<+\infty$.
    \item To show $v\big|_{I\times \Omega}=v\in L^2(I\rightarrow \mathcal{F}_\Omega)$, it remains to check $v\in L^2(I\times \Omega, dtd\mu_\Omega)$. This follows from the assumption that $\mu_\Omega$\ and $m|_\Omega$\ are comparable.
\end{itemize}
\item $\mathcal{F}_{\Omega,\,c}(I\times V)\subset \mathcal{F}_c(I\times V)$: for any $v\in \mathcal{F}_{\Omega,\,c}(I\times V)$
\begin{itemize}
    \item For a.e. $t\in I$, $v^t\in \mathcal{F}_{\Omega,\,c}\subset \mathcal{F}$. Hence $\mathcal{E}_\Omega(v^t,v^t)=\mathcal{E}(v^t,v^t)$\ and so is the integral in time.
\end{itemize}
\end{itemize}
Proof of $\mathcal{F}_{\scaleto{\mbox{loc}}{5pt}}(I\times V)=\mathcal{F}_{\Omega,\,\scaleto{\mbox{loc}}{5pt}}(I\times V)$.
\begin{itemize}
    \item For any $v\in \mathcal{F}_{\Omega,\,\scaleto{\mbox{loc}}{5pt}}(I\times V)$, for any $J\times W\Subset I\times V$, there exists some $v^\sharp\in L^2(I\rightarrow \mathcal{F}_\Omega)$, such that $v^\sharp=v$\ a.e. on $J\times W$. By multiplying with a nice product cut-off function if necessary, we may assume $v^\sharp$\ is compactly supported in $I\times V$. Since $\mathcal{F}_c(I\times V)=\mathcal{F}_{\Omega,\,c}(I\times V)$, in particular, $v^\sharp$\ belongs to $L^2(I\rightarrow \mathcal{F})$\ as well. So $v\in \mathcal{F}_{\scaleto{\mbox{loc}}{5pt}}(I\times V)$.
    \item For any $v\in \mathcal{F}_{\scaleto{\mbox{loc}}{5pt}}(I\times V)$, for any $J\times W\Subset I\times V$, there exists some $v^\sharp\in L^2(I\rightarrow \mathcal{F})$, such that $v^\sharp=v$\ a.e. on $J\times W$. 
    If for any nice cut-off function $\eta\in \mathcal{F}_{\Omega,\,c}(V)$, $\eta(x)v^\sharp(t,x)\in L^2(I\rightarrow \mathcal{F}_\Omega)$, then since $\eta$\ can be chosen properly so that $\eta v^\sharp=v$\ a.e. on $J\times W$, $v\in \mathcal{F}_{\Omega,\,\scaleto{\mbox{loc}}{5pt}}(I\times V)$. We now show that $\eta(x)v^\sharp(t,x)\in L^2(I\rightarrow \mathcal{F}_\Omega)$.
    \begin{itemize}
        \item Note that for a.e. $t\in I$, $v^\sharp(t,\cdot)\big|_\Omega\in \mathcal{F}_{\Omega,\,\scaleto{\mbox{loc}}{5pt}}$\ by (i) in Definition \ref{agreelocally}, so $\eta(\cdot)v^\sharp(t,\cdot)\in \mathcal{F}_{\Omega,\,c}$. Denote $v^\sharp(t,\cdot)$\ by $v^{\sharp,\,t}(\cdot)$, then $\mathcal{E}_\Omega(\eta\,v^{\sharp,\,t},\,\eta\,v^{\sharp,\,t})=\mathcal{E}(\eta\,v^{\sharp,\,t},\,\eta\,v^{\sharp,\,t})$.
        \item If $v^\sharp$\ is of product form $v^\sharp(t,x)=f(t)g(x)$, then clearly $\eta(x)v^\sharp(t,x)\in L^2(I\rightarrow\mathcal{F})$, and integrating in $t$\ the conclusion in the previous item shows that $\eta v^\sharp\in L^2(I\rightarrow \mathcal{F}_\Omega)$. For general $v^\sharp$, by approximation using simple functions in $L^2(I\rightarrow \mathcal{F})$, it follows that $\eta v^\sharp\in L^2(I\rightarrow \mathcal{F})$, and hence in $L^2(I\rightarrow \mathcal{F}_\Omega)$.
        \end{itemize}
\end{itemize}
\end{example}

\subsection{Other examples}
We now step out of the global bilinear form setting and give two more examples.
\begin{example}
As in Example \ref{unifellip}, let $A=\left( a_{ij}(x) \right)_{n\times n}$\ be symmetric, measurable, locally bounded, and locally uniformly elliptic. Let $\Omega$\ be a domain in $\mathbb{R}^n$\ that may or may not be simply-connected. Let ${\bf b}(x):\Omega\rightarrow \mathbb{R}^n$\ be a locally bounded vector field that is given locally by ${\bf b}(x)=\nabla \log{\mu}(x)$\ for some differentiable function $\mu$. That is, for any $U\Subset \Omega$, ${\bf b}(x)=\nabla \log{\mu_{\scaleto{U}{4pt}}}(x)$\ for some differentiable function $\mu_U$\ on $V$. Consider the second order differential operator $L$\ defined by
\begin{eqnarray*}
Lu:=\mbox{div}{\left(A(x)\nabla u\right)}+{\bf b}(x)\cdot A(x)\nabla u,
\end{eqnarray*}
for any compactly supported smooth function $u$. The precise interpretation of $L$\ is through the (nonsymmetric) bilinear form associated with it. This example is considered in for example \cite{drifteg}. On each small open ball $U=B(x;r)$\ where ${\bf b}(x)=\nabla \log{\mu_{\scaleto{U}{4pt}}}(x)$, write $\mu_{\scaleto{U}{4pt}}(x)=:\mu(x)$, and define another differential operator $\widetilde{L}$\ by
\begin{eqnarray*}
\widetilde{L}u=\frac{1}{\mu(x)}\mbox{div}{(\mu(x)A(x)\nabla u)}.
\end{eqnarray*}
Then $\widetilde{L}$\ is formally symmetric on $L^2(U,\,\mu(x)dx)$, namely for smooth functions $u,v$\ with supports in $U$,
\begin{eqnarray*}
\int_{U}(\widetilde{L}u)v\,\mu(x)dx=\int_{U}-\nabla v\cdot A(x)\nabla u\,\mu(x)dx;
\end{eqnarray*}
$L$\ and $\widetilde{L}$\ coincide formally in the sense that
\begin{eqnarray*}
\int_{U}(Lu)v\,dx=\int_{U}(\widetilde{L}u)v\,dx.
\end{eqnarray*}
Let $(\mathcal{E}_U,\mathcal{F}_U)$\ be the Dirichlet form on $L^2(U,\mu(x)dx)$\ associated with $\widetilde{L}$\ with the minimal domain $\mathcal{F}_U$, i.e., $\mathcal{F}_U$\ is obtained by completion of $C_c^\infty(U)$\ w.r.t. the $\mathcal{E}_{U,1}$\ norm. Since $\mu(x)$\ is locally bounded, $\mathcal{F}_U=H^1_0(U,\mu(x)dx)=H^1_0(U,dx)$. $(\mathcal{E}_U,\mathcal{F}_U)$\ is symmetric strongly local regular, and satisfies Assumption \ref{cutoff} (existence of nice cut-off functions). Its corresponding semigroup admits a continuous kernel which satisfies the Gaussian bound, cf. \cite{drifteg,unifelliptic}. For any open interval $I\subset \mathbb{R}$, any local weak solution $u$\ of $(\partial_t+L)u=f$\ on $I\times U$\ in the sense that $u$\ is locally in $L^2(I\rightarrow H^1(U))$\ and satisfies for any $\varphi\in C_c^\infty(I\times U)$,
\begin{eqnarray*}
\lefteqn{\hspace{-.1in}-\int_I\int_U u\partial_t\varphi\,dxdt-\int_I\int_U A(x)\nabla_x u\cdot \nabla_x \varphi\,dxdt+\int_I\int_U\varphi{\bf b}(x)\cdot A(x)\nabla_x u\,dxdt}\\
&&\hspace{3.3in}=\int_I\int_\Omega f\varphi\,dxdt,
\end{eqnarray*}
is a local weak solution of $(\partial_t+\widetilde{L})u=f$\ on $I\times U$. Here $f\in L^2_{\scaleto{\mbox{loc}}{5pt}}(U,dx)$. Indeed, for any $\phi\in C_c^\infty(I\times U)$, the above equality applied to $\varphi(t,x)=\phi(t,x)\mu(x)$\ gives (recall that ${\bf b}(x)=\nabla \log{\mu}(x)$\ on $U$)
\begin{eqnarray*}
-\int_I\int_U u\partial_t\phi\,\mu(x)dxdt-\int_I\int_U A(x)\nabla_x u\cdot \nabla_x \phi\,\mu(x)dxdt=\int_I\int_\Omega f\phi\,\mu(x)dxdt.
\end{eqnarray*}
Therefore, Theorem \ref{generalthm} applies, $u$\ is continuous on $I\times U$\ and hence on $I\times \Omega$.
\end{example}


\begin{example}[Caloric functions on graph]
Let $X=(V,E)$\ be a graph with vertex set $V$\ and edge set $E$. Assume that $V,E$\ are countable sets and there is a map $E\rightarrow V\times V$\ with $e\mapsto (e_-,e_+)$, where $e_-\neq e_+$. There is no loop, but multiple edges could be between two vertices; each edge is oriented. For simplicity, assume each edge has length $1$, i.e., each edge can be identified with $(0,1)$. For each vertex $v\in V$, let $E_v$\ denote the set of edges having $v$\ as a vertex. Assume that each $|E_v|$\ is finite. Let ${\bf b}$\ be a function that assigns to each $v\in V$\ a weight map ${\bf b}(v):E_v\rightarrow \mathbb{R}_+,\ e\mapsto b_v(e)$. For each $e\in E$, denote the restriction of any function $f$\ on $e$\ by $f_e$. Because each edge $e$\ is oriented, the derivative $f_e'$\ is well-defined.  For any vertex $v$\ and any $e\in E_v$, let $\epsilon_v(e)=1$\ if $v=e_+$, and $\epsilon_v(e)=-1$\ if $v=e_-$. With this notation, $\epsilon_v(e)f_e'(v)$\ represents the outward normal derivative of $f$\ at vertex $v$, along the edge $e$. We define a collection of function spaces recursively as follows. The stated conditions on edges and vertices should hold for all edges and vertices.
\begin{itemize}
    \item $\mathcal{C}_{b}^0$\ consists of all continuous functions on $X$;
    \item $\mathcal{C}_{b}^1:=\left\{f\in \mathcal{C}_{b}^0:f_e\in C^1(\overline{e}),\,\sum\limits_{e\in E_v} \epsilon_v(e)b_v(e)f'_e(v)=0\right\}$;
    \item $\mathcal{C}_{b}^2:=\left\{f\in \mathcal{C}_{b}^1:f_e\in C^2(\overline{e}),\,f''_{e}(v)=f''_{\tilde{e}}(v)\ \mbox{for any $e,\tilde{e}\in E_v$}\right\}$;
    \item $\mathcal{C}_b^{2k+1}:=\left\{f\in \mathcal{C}_b^{2k}: f_e\in C^{2k+1}(\overline{e}),\,\sum\limits_{e\in E_v} \epsilon_v(e)b_v(e)f^{(2k+1)}_e(v)=0\right\}$;
    \item $\mathcal{C}_b^{2k+2}:=\\[0.05in]
    \left\{f\in \mathcal{C}_b^{2k+1}: f_e\in C^{2k+2}(\overline{e}),\,f^{(2k+2)}_{e}(v)=f^{(2k+2)}_{\tilde{e}}(v)\ \mbox{for any $e,\tilde{e}\in E_v$}\right\}$.
\end{itemize}
The conditions $\sum\limits_{e\in E_v} \epsilon_v(e)b_v(e)f'_e(v)=0$, $\sum\limits_{e\in E_v} \epsilon_v(e)b_v(e)f^{(2k+1)}_e(v)=0$\ are often called the Kirchhoff condition.
For any open subset $\Omega\subset X$\ and open interval $I\subset \mathbb{R}$, for any $n\in \mathbb{N}$, let $C_b^{n}(\Omega)$\ be defined as above with conditions restricted to edges and vertices in $\Omega$; consider the function space $C^1(I\rightarrow \mathcal{C}_b^{2*}(\Omega))$, where
\begin{eqnarray*}
C_b^{2*}(\Omega):=\left\{f\in C_b^1(\Omega):f_e\in C^2(\overline{e}),\,\mbox{for any $e$\ with $e\cap \Omega\neq \emptyset$}\right\},
\end{eqnarray*}
and the subspace of $C^1(I\rightarrow \mathcal{C}_b^{2*}(\Omega))$\ in which functions further satisfy the heat equation $\partial_tu_e-\partial_{xx}u_e=0$\ on each edge $e$\ with $e\cap\Omega\neq \emptyset$. We denote this subspace by $\mathfrak{C}(I\times\Omega)$\ and call functions in this subspace {\em caloric functions} on $I\times\Omega$. Note that by definition, for any fixed $t\in I$, caloric functions $u$\ satisfy the Kirchhoff condition
\begin{eqnarray}
\label{kirch}
\sum\limits_{e\in E_v} \epsilon_v(e)b_v(e)\partial_xu_e(v)=0.
\end{eqnarray}


For any vertex $v$, consider any collection $\mathcal{S}(v)$\ of vertices in $X$\ that contains $v$, such that these vertices, together with all edges in $\{E_w,\,w\in \mathcal{S}(v)\}$, is an $\mathbb{R}$-tree. Denote this $\mathbb{R}$-tree by $T_\mathcal{S}(v)$. In Figure \ref{figure} we give two examples of $T_\mathcal{S}(v)$\ inside some grid as the whole space. The hollow nodes stand for vertices not included in the $\mathbb{R}$-tree $T_\mathcal{S}(v)$. The first one is the simplest, containing the vertex $v$\ and all four edges attached to it. The second example contains more vertices, all marked as filled nodes, together with all edges attached to these vertices. Recall that we consider open edges.

\begin{center}
\begin{tikzpicture}
\draw [thick] (0,0) -- (1,0);
\draw [thick] (0,0) -- (-1,0);
\draw [thick] (0,0) -- (0,1);
\draw [thick] (0,0) -- (0,-1);
\draw [fill] (0,0) circle [radius=0.1]; 
\node at (.15,-.2) {$v$};
\draw (1,0) circle [radius=0.1];
\draw (-1,0) circle [radius=0.1];
\draw (0,1) circle [radius=0.1];
\draw (0,-1) circle [radius=0.1];
\end{tikzpicture}
\hspace{0.5in}
\begin{tikzpicture}
\draw[step=1cm,black,thick] (-1,-1) grid (1,2);
\draw [fill] (0,0) circle [radius=0.1]; 
\node at (.15,-.2)  {$v$};
\draw [fill] (1,0) circle [radius=0.1];
\draw [fill] (-1,0) circle [radius=0.1];
\draw [fill] (0,-1) circle [radius=0.1];
\draw [fill] (0,1) circle [radius=0.1];
\draw [fill] (0,2) circle [radius=0.1];
\draw [fill] (-1,2) circle [radius=0.1];
\draw [fill] (1,2) circle [radius=0.1];
\draw [fill] (-1,3) circle [radius=0.1];
\draw (-1,4) circle [radius=0.1];
\draw (0,3) circle [radius=0.1];
\draw (1,3) circle [radius=0.1];
\draw (-2,3) circle [radius=0.1];
\draw (-1,1) circle [radius=0.1];
\draw (1,1) circle [radius=0.1];
\draw (-1,-1) circle [radius=0.1];
\draw (1,-1) circle [radius=0.1];
\draw (0,-2) circle [radius=0.1];
\draw (2,0) circle [radius=0.1];
\draw (-2,2) circle [radius=0.1];
\draw (2,2) circle [radius=0.1];
\draw (-2,0) circle [radius=0.1];
\draw [thick] (1,0) -- (2,0);
\draw [thick] (1,2) -- (2,2);
\draw [thick] (1,2) -- (1,3);
\draw [thick] (0,2) -- (0,3);
\draw [thick] (-1,2) -- (-1,3);
\draw [thick] (-1,2) -- (-2,2);
\draw [thick] (-1,3) -- (-2,3);
\draw [thick] (-1,3) -- (0,3);
\draw [thick] (-1,3) -- (-1,4);
\draw [thick] (-1,0) -- (-2,0);
\draw [thick] (0,-1) -- (0,-2);
\end{tikzpicture}
\captionof{figure}{Examples of admissible $T_\mathcal{S}(v)$}\label{figure}
\end{center}

Note that at each vertex $w$, for the Kirchhoff condition (\ref{kirch}) to hold, only the ratios among the edge weights $b_w(e)$, $e\in E_w$\ matter. Therefore, by starting with edges in $E_v$\ and recursively modifying the edge weights for vertices moving away from $v$\ in $T_\mathcal{S}(v)$, we can obtain a new weight function $\widetilde{b}$\ satisfying
\begin{itemize}
   \item[(1)] at each $w\in T_\mathcal{S}(v)$, the ratios among the new edge weights for edges in $E_w$\ are the same as the old ones. That is, for any $e,\widetilde{e}\in E_w$, $\frac{b_w(e)}{b_w(\widetilde{e})}=\frac{\widetilde{b}_w(e)}{\widetilde{b}_w(\widetilde{e})}$. This condition implies that $C_b^n(\Omega)=C_{\widetilde{b}}^n(\Omega)$\ for all $n\in \mathbb{N}$.
   \item[(2)] $\widetilde{b}$\ is symmetric in $T_\mathcal{S}(v)$, i.e., $\widetilde{b}_{e_-}(e)=\widetilde{b}_{e_+}(e)$\ for all edges $e$\ and vertices $e_-,e_+$\ in $T_\mathcal{S}(v)$. In particular, we may write without subscript as $\widetilde{b}(e)$.
\end{itemize}
Note that such $\widetilde{b}$\ can be obtained because $T_\mathcal{S}(v)$\ is a tree.

To simplify notation, let $\Omega:=T_\mathcal{S}(v)$. Consider the Dirichlet form $(\mathcal{E}^\Omega_0,\mathcal{F}^\Omega_0)$\ defined on $\Omega$\ as follows. Let the measure $\mu_\Omega$\ on $\Omega$\ be such that its restriction on each edge $e\subset \Omega$\ equals $\widetilde{b}(e)dx$, where $dx$\ denotes the standard Lebesgue measure on that edge. The domain of the Dirichlet form, $\mathcal{F}^\Omega_0$, contains functions $f$\ that belong to $H^1_0(\Omega,\mu_\Omega)$. The subscript $0$\ stands for Dirichlet boundary condition. Note that $H^1_0(\Omega,\mu_\Omega)\subset C(\Omega)$. For any $f,g\in \mathcal{F}^\Omega_0$, $\mathcal{E}^\Omega_0(f,g)$\ is defined as
\begin{eqnarray*}
\mathcal{E}^\Omega_0(f,g)=\sum_{e}\int_{e\cap \Omega}f_e'(x) g_e'(x)\,\widetilde{b}(e)dx.
\end{eqnarray*}
$(\mathcal{E}^\Omega_0,\mathcal{F}^\Omega_0)$\ is a symmetric (strongly) local regular Dirichlet form. Let $-\Delta_{\widetilde{b}}^\Omega$\ denote the generator of $(\mathcal{E}^\Omega_0,\mathcal{F}^\Omega_0)$. For any function $f$\ in the domain of $-\Delta_{\widetilde{b}}^\Omega$, on any edge $e$\ in $\Omega$, $\Delta_{\widetilde{b}}^\Omega f_e=-f''_e$.

Integration by parts shows that any function $u$\ in $\mathfrak{C}(I\times\Omega)$\ is a local weak solution of the heat equation $\partial_tu+\Delta_{\widetilde{b}}^\Omega u=0$\ on $I\times \Omega$: $u\in H^1_{\scaleto{\mbox{loc}}{5pt}}(I\times\Omega)\subset \mathcal{F}_{\scaleto{\mbox{loc}}{5pt}}(I\times\Omega)$; for any $\varphi\in C^\infty(I\rightarrow \cap_{n\in \mathbb{N}}C_b^{n}(\Omega))$\ with compact support in $I\times \Omega$, 
\begin{eqnarray*}
-\int_I\int_\Omega u\partial_t\varphi\,d\mu_\Omega dt-\int_I\mathcal{E}^\Omega_0(u,\varphi)\,dt=0.
\end{eqnarray*}
In \cite{Riemcplx} it is shown that $(\mathcal{E}^\Omega_0,\mathcal{F}^\Omega_0)$\ satisfies the assumptions in Theorem \ref{generalthm}. Its corresponding semigroup admits a continuous kernel. The following proposition follows from Theorem \ref{generalthm}.
\begin{proposition}
In the above setting, let $I\subset \mathbb{R}$\ be an open interval and $\Omega=T_\mathcal{S}(v)$\ for some vertex $v$\ and selection of vertices $\mathcal{S}(v)$. Then all caloric functions on $I\times \Omega$\ belong to $C^\infty\left(I\rightarrow \cap_{n\in\mathbb{N}}C_b^{n}(\Omega)\right)$, and all time derivatives of the caloric functions are still caloric functions.
\end{proposition}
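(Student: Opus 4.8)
The plan is to feed the caloric function $u$ into the general framework of Theorem \ref{generalthm}, which already applies to the symmetrized Dirichlet form $(\mathcal{E}^\Omega_0,\mathcal{F}^\Omega_0)$ on $\Omega=T_{\mathcal S}(v)$, and then bootstrap back and forth between regularity in $t$ (supplied by the framework) and regularity in $x$ (supplied by the scalar heat equation on each edge).

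First I would record what the framework gives for free. As observed in the excerpt, every $u\in\mathfrak{C}(I\times\Omega)$ is a local weak solution of $(\partial_t+\Delta^\Omega_{\widetilde b})u=0$ on $I\times\Omega$, the form $(\mathcal{E}^\Omega_0,\mathcal{F}^\Omega_0)$ satisfies the hypotheses of Theorem \ref{generalthm}, and its semigroup has a continuous kernel. So, with right-hand side $f\equiv 0$ (which is locally in $W^{n,\infty}(I\to L^\infty(\Omega))$ for all $n$), Theorem \ref{generalthm} gives that $u$ is locally in $W^{n,\infty}(I\to L^\infty(\Omega))$ for every $n$ and, by continuity of the kernel, is continuous on $I\times V$ for every $V\Subset\Omega$, hence on $I\times\Omega$. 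Moreover, by Theorem 4.1 of \cite{L2}, every time derivative $\partial_t^k u$ is again a local weak solution of the same equation, so the same reasoning shows each $\partial_t^k u$ is continuous on $I\times\Omega$.

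The heart of the argument is an induction on $k$ establishing simultaneously that $\partial_t^k u\in\mathfrak{C}(I\times\Omega)$ and that $u(t,\cdot)\in C_b^n(\Omega)$ for all $n$. On the interior of each open edge $e$ the function $u_e$ solves $\partial_t u_e=\partial_{xx}u_e$ classically with $u\in C^1(I\to C^2(\overline e))$, so one-dimensional interior parabolic regularity upgrades $u_e$ to a jointly $C^\infty$ function on $\mathrm{int}(e)\times I$, where $\partial_t^j u_e=\partial_x^{2j}u_e$ and $\partial_x^{2j+1}u_e=\partial_x(\partial_t^j u_e)$. The identity $\partial_x^{2j}u_e=\partial_t^j u_e$ together with the continuity of $\partial_t^j u$ on all of $\Omega$ forces $\partial_x^{2j}u_e$ to extend continuously to the endpoints of $e$, with value at a vertex $v$ equal to $(\partial_t^j u)(t,v)$, independent of the edge $e\in E_v$ --- precisely the matching condition $f_e^{(2j)}(v)=f_{\tilde e}^{(2j)}(v)$ in the definition of $C_b^{2j}$. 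For the odd-order conditions I would integrate by parts in the weak formulation satisfied by $w:=\partial_t^j u$ against test functions supported near a single vertex $v$: since $w$ is $C^\infty$ on edge interiors and continuous across $v$, the edge equation cancels the bulk terms and what survives is $\sum_{e\in E_v}\epsilon_v(e)\widetilde b(e)\,w_e'(v)=0$; because the symmetrized weights $\widetilde b$ preserve, at each vertex, the ratios of the original weights $b_v(\cdot)$, this is equivalent to the Kirchhoff condition $\sum_{e\in E_v}\epsilon_v(e)b_v(e)\,\partial_x(\partial_t^j u)_e(v)=0$, the defining condition of $C_b^{2j+1}$; and $\partial_x^{2j+1}u_e=\partial_x(\partial_t^j u)_e$ extends continuously to $\overline e$ once $\partial_t^j u$ is known inductively to be caloric, hence $C^2$ up to vertices along each edge. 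This yields $u(t,\cdot)\in C_b^n(\Omega)$ for all $n$; differentiating $\partial_t u_e=\partial_{xx}u_e$ repeatedly in $t$ and feeding in this spatial regularity promotes the continuous $t$-dependence of each $\partial_t^k u$ into $C_b^n(\Omega)$ to a $C^\infty$ one, i.e.\ $u\in C^\infty\!\big(I\to\bigcap_n C_b^n(\Omega)\big)$. Rerunning the discussion with $\partial_t^k u$ in place of $u$ gives $\partial_t^k u\in\mathfrak{C}(I\times\Omega)$, closing the induction.

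The main obstacle I anticipate is bookkeeping: arranging the loop ``$\partial_t^k u$ is a local weak solution $\Rightarrow$ $\partial_t^k u$ continuous on $\Omega$ (Theorem \ref{generalthm}) $\Rightarrow$ $u(t,\cdot)$ satisfies the vertex conditions defining $C_b^n$ $\Rightarrow$ $\partial_t^k u\in\mathfrak{C}(I\times\Omega)$'' so that it closes without circularity, and carefully justifying the boundary terms in the integration by parts at the vertices --- which is exactly where the passage from the weights $b$ to the symmetric weights $\widetilde b$ on the tree $T_{\mathcal S}(v)$ enters, since that is what makes $(\mathcal{E}^\Omega_0,\mathcal{F}^\Omega_0)$ symmetric and lets Theorem \ref{generalthm} apply at all. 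The one-dimensional interior regularity and the $t$-differentiation bootstrap are routine.
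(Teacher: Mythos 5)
Your proposal is correct and follows essentially the same route as the paper: invoke Theorem \ref{generalthm} (with the $L^2$ result of \cite{L2}) to get that all time derivatives of $u$ are continuous local weak solutions, identify $\partial_t^n(\Delta^\Omega_{\widetilde b})^m u$ with $\partial_t^n\partial_x^{2m}u_e$ on edges to obtain continuous extension and the even-order matching conditions at vertices, and recover the odd-order Kirchhoff conditions (with $\widetilde b$ converted back to $b$ via the ratio-preserving property) from the weak formulation. The only difference is that you spell out the vertex integration-by-parts argument that the paper leaves implicit, which is a welcome elaboration rather than a departure.
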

\begin{proof}Let $u$\ be any caloric function on $I\times\Omega$. On each edge $e$\ in $\Omega$, by definition of caloric functions, $u$\ satisfies $\partial_tu_e-\partial_{xx}u_e=0$\ and hence is smooth in time and space. As discussed above, by Theorem \ref{generalthm}, $u$\ is locally in $C^{\infty}(I\rightarrow \mathcal{F}^\Omega_0)$; all time derivatives of $u$\ are local weak solutions of the heat equation, i.e, for any $n\in \mathbb{N}$, $\partial_t^nu$\ is a local weak solution of $\partial_t(\partial_t^nu)+\Delta^\Omega_{\widetilde{b}}(\partial_t^nu)=0$\ on $I\times\Omega$; they are continuous on $I\times\Omega$.
It then follows that
\begin{itemize}
	\item[(1)] for any $m,n\in \mathbb{N}$, $\partial_t^n(\Delta_{\widetilde{b}}^\Omega)^mu$\ is continuous on $I\times\Omega$, i.e., the functions $\partial_t^n\partial_x^{2m}u_e$\ on each $I\times e$\ can be continuously extended to $I\times\Omega$;
	\item[(2)] by (1), for any fixed time $t$, any $n\in \mathbb{N}$, $\partial_t^nu_e(t,\cdot)\in C^\infty(\overline{e})$\ for all $e$\ in $\Omega$; for any $m,n\in \mathbb{N}$, at any vertex $w$\ in $\Omega$, 
\begin{eqnarray*}
	\sum_{e\in E_w}\epsilon_w(e)b_w(e)\,\partial_t^n\partial_x^{2m+1}u_e(w)=0.
\end{eqnarray*}
\end{itemize}
This completes the proof that $u\in C^\infty\left(I\rightarrow \cap_{n\in\mathbb{N}}C_b^{n}(\Omega)\right)$\ and all time derivatives of $u$\ are still caloric functions on $I\times\Omega$.
\end{proof}
Note that in general, there are no global Dirichlet form structures on $X$\ that correspond to heat equations admitting such caloric functions as local weak solutions.
\end{example}
\subsection{Proof of Theorem \ref{mainthm}}
Under the hypotheses, $u$\ (more precisely, $u\big|_{I\times U}$) is a local weak solution of $(\partial_t+P_\Omega)u=f$, namely
\begin{itemize}
    \item $u\big|_{I\times U}\in \mathcal{F}_{\Omega,\,\scaleto{\mbox{loc}}{5pt}}(I\times U)$;
    \item for any $\varphi\in C_c^\infty(I\rightarrow \mathcal{F}_{\Omega})\cap \mathcal{F}_{\Omega,\,c}(I\times U)$,
\begin{eqnarray*}
-\int_I\int_U u\cdot \partial_t\varphi\,d\mu_\Omega dt+\int_I\mathcal{E}_\Omega(u,\varphi)\,dt=\int_I\int_U f\varphi\,d\mu_\Omega dt.
\end{eqnarray*}
\end{itemize} 
We first show that without loss of generality, $(H_t^\Omega)_{t>0}$\ can be assumed to satisfy the global ultracontractivity condition. More precisely, for any $V\Subset \Omega$, the restriction form $(\mathcal{E}_{\Omega,\,V},\mathcal{F}_{\Omega,\,V})$\ of $(\mathcal{E}_\Omega,\mathcal{F}_\Omega)$\ on $V$\ (see Example \ref{restrictionformeg}, using notations there $(\mathcal{E}_{\Omega,\,V},\mathcal{F}_{\Omega,\,V})$\ should be written as $((\mathcal{E}_{\Omega})^{V}_0,(\mathcal{F}_{\Omega})^V_0)$) satisfies that
\begin{itemize}
    \item the bilinear triple $(\mathcal{B},\widetilde{\mathcal{D}},\widetilde{\mathcal{D}}_0)$\ is represented by $(\mathcal{E}_{\Omega,\,V},\mathcal{F}_{\Omega,\,V})$\ on $V$;
    \item the semigroup associated with $(\mathcal{E}_{\Omega,\,V},\mathcal{F}_{\Omega,\,V})$, denoted by $(H_t^{\Omega,\,V})_{t>0}$, satisfies the global ultracontractivity condition
    \begin{eqnarray*}
    ||H_t^{\Omega,\,V}||_{L^2(V)\rightarrow L^\infty(V)}\leq e^{M_V(t)};
    \end{eqnarray*}
    \item $(H_t^{\Omega,\,V})_{t>0}$\ also satisfies the $L^\infty$\ off-diagonal upper bound (\ref{globalGaussian})(\ref{mcontrolpoly});
    \item if $V\subset U$, $u\big|_{I\times V}$\ is a local weak solution of $(\partial_t+P_{\Omega,\,V})u=f$\ on $I\times V$. Here $-P_{\Omega,\,V}$\ is the generator of the restriction form $(\mathcal{E}_{\Omega,\,V},\mathcal{F}_{\Omega,\,V})$.
\end{itemize}    
Items 2 and 3 follow from the facts that for any $f\geq 0$\ in $L^2(V)$\ and supported in $V$, for any $t>0$\ and a.e. $x$\ in $V$,
\begin{eqnarray*}
H_t^{\Omega,\,V}f(x)\leq H^\Omega_tf(x),
\end{eqnarray*}
and that the semigroups are positivity preserving. Verifying the off-diagonal upper bounds for the time derivatives, i.e., for the terms $<\partial_t^kH_t^{\Omega,\,V}f,\,g>$\ with $k>0$, is similar to the proof of Lemma 8.4 in \cite{L2}. Hence, by passing to some restriction form if necessary, we assume that $(H_t^\Omega)_{t>0}$\ is globally ultracontractive. By Theorem \ref{mainthm0}, $u$\ locally belongs to $W^{n,\infty}(I\rightarrow L^\infty(U,\mu_\Omega))$.

Now suppose $H_t^\Omega$\ admits a density kernel $h_\Omega(t,x,y)$\ that is continuous on $(0,c)\times V\times V$\ for some $V\subset U$. To show continuity of $u$, consider the semigroup $H_t^{\Omega,\,W}$\ corresponding to the restriction form $(\mathcal{E}_{\Omega,\,W},\mathcal{F}_{\Omega,\,W})$, where $W\Subset V$\ is any precompact open subset. First note that the global ultracontractivity condition for $H_t^{\Omega,\,W}$\ guarantees the existence of an (essentially bounded) heat kernel $h_{\Omega,\,W}(t,x,y)$. By applying Corollary \ref{continuitythm} to the PDE $(\partial_t+P_{\Omega,\,W})u=f$\ on $I\times W$, it is enough to show that $h_\Omega(t,x,y)$\ being continuous on $(0,c)\times W\times W$\ implies $h_{\Omega,\,W}(t,x,y)$\ being continuous on $(0,c)\times W\times W$. Let $W'\Subset W$\ be any precompact open subset, we show that $h_{\Omega,\,W}$\ is continuous on $(0,c)\times W'\times W'$. By the Dynkin formula (cf. \cite{Dynkinformula}), the two kernels are related by
	\begin{eqnarray*}
		h_{\Omega,\,W}(t,x,y)=h_\Omega(t,x,y)-\int_{0}^{t}\int_{\partial W}h_\Omega(t-t',z,y)\,\mu_x(dt'dz),
	\end{eqnarray*}
	where $\partial W$\ denotes the boundary of $W$, and for each $x$, $\mu_x$\ is a probability measure on $[0,+\infty]\times\partial W$. For any $\epsilon>0$, by the $L^\infty$\ off-diagonal upper bound for the semigroup $H^\Omega_t$, since $W'$\ and $\partial W$\ are separated by disjoint open sets, there exists some $\delta(\epsilon)>0$\ such that
	\begin{eqnarray}
	\sup_{0<s<\delta(\epsilon)}\sup_{y\in W',\, z\in \partial W}h_\Omega(s,z,y)<\epsilon.
	\end{eqnarray}
	Here taking supremum is valid since $h_\Omega$\ is continuous. For any fixed $t\in (0,c)$, let $\delta=\min\{\frac{t}{2},\, \delta(\epsilon)\}$. Since $\mu_x$\ is a probability measure, the integral
	\begin{eqnarray}
	\int_{t-\delta}^{t}\int_{\partial W}h_\Omega(t-t',z,y)\,\mu_x(dt'dz)<\epsilon
	\end{eqnarray}
	for any $x,y\in W'$.
	
	For the same $\epsilon$, since $h_\Omega(t,z,y)$\ is continuous on $(0,c)\times V\times V$\ and $(\delta,t)\times W\times W\Subset (0,c)\times V\times V$, for any $x\in W'$\ and any $y_1,y_2\in W'$,
	\begin{eqnarray*}
		&&\hspace{-.2in}\left|\int_{0}^{t-\delta}\int_{\partial W}h_\Omega(t-t',z,y_1)\,\mu_x(dt'dz)-\int_{0}^{t-\delta}\int_{\partial W}h_\Omega(t-t',z,y_2)\,\mu_x(dt'dz)\right|\\
		&\leq& \sup_{r\in (\delta,\,t),\,z\in \partial W}|h_\Omega(r,z,y_1)-h_\Omega(r,z,y_2)|\int_{0}^{t-\delta}\int_{\partial W}\mu_x(dt'dz)\\
		&\leq& \sup_{r\in (\delta,\,t),\,z\in \partial W}|h_\Omega(r,z,y_1)-h_\Omega(r,z,y_2)|.
	\end{eqnarray*}
	This upper bound is independent of $x$\ and can be made less than $\epsilon$\ by taking $y_1$\ and $y_2$\ close. Hence for any $\epsilon>0$, for any $t\in (0,c)$, there exists some $d_0>0$\ such that for any $x\in W'$\ and any $y_1,y_2\in W'$\ where $y_1$\ and $y_2$\ have distance less than $d_0$\ (here the distance is that of the ambient space $X$), 
	\begin{eqnarray*}
	\lefteqn{|h_{\Omega,\,W}(t,x,y_1)-h_{\Omega,\,W}(t,x,y_2)|}\\
	&\leq& |h_{\Omega}(t,x,y_1)-h_{\Omega}(t,x,y_2)|+2\sup_{y\in W'}\int_{t-\delta}^{t}\int_{\partial U}h_\Omega(t-t',z,y)\,\mu_x(dt'dz)\\
	&&\hspace{-.2in}+\left|\int_{0}^{t-\delta}\int_{\partial W}h_\Omega(t-t',z,y_1)\,\mu_x(dt'dz)-\int_{0}^{t-\delta}\int_{\partial W}h_\Omega(t-t',z,y_2)\,\mu_x(dt'dz)\right|\\
	&<&4\epsilon.
	\end{eqnarray*}
	Note that $d_0$\ is independent of $x\in W'$; in other words, for any $t\in (0,c)$, $h_{\Omega,\,W}(t,x,y)$\ is equicontinuous in $y$\ on $W'$. By the symmetry of $h_{\Omega,\,W}$, it is also equicontinuous in $x$\ on $W'$. We conclude that for any $t\in (0,c)$, $h_{\Omega,\,W}(t,x,y)$\ is continuous on $W'\times W'$\ and thus on $W\times W$. Moreover, $h_{\Omega,\,W}$\ is smooth in $t$ (it is in $C^\infty((0,c)\rightarrow L^\infty(W\times W))$\ and hence in $C^\infty((0,c)\rightarrow C(W\times W))$), so $h_{\Omega,\,W}$\ is continuous on $(0,c)\times W\times W$. This completes the proof of the claim that $u$\ is continuous on $I\times V$.
\section{Applications}
\setcounter{equation}{0}
\subsection{Existence and local boundedness of density under local ultracontractivity assumption}
In this subsection we discuss the existence of the heat kernel of a locally ultracontractive semigroup. We start with a more general theorem.
\begin{theorem}
\label{densitythm}
Let $(X,m)$\ be a metric measure space and $\Omega\subset X$\ be an open subset. Let $(\mathcal{E}_\Omega,\mathcal{F}_\Omega)$\ be a symmetric local regular Dirichlet form on $L^2(\Omega,\mu_\Omega)$, where $\mu_\Omega$\ is some measure on $\Omega$\ that admits the same measure-zero sets but not necessarily agrees with the restriction of the measure $m$.  Suppose $(\mathcal{E}_\Omega,\mathcal{F}_\Omega)$\ satisfies
\begin{itemize}
    \item Assumption \ref{cutoff} (existence of nice cut-off functions);
    \item the corresponding semigroup $(H_t^\Omega)_{t>0}$\ is locally ultracontractive (\ref{localultraassumption});
    \item $(H_t^\Omega)_{t>0}$\ satisfies the $L^\infty$\ off-diagonal upper bound (\ref{globalGaussian})(\ref{mcontrolpoly}) in $\Omega$.
\end{itemize}
Let $I:=(0,1)$. Let $(S_t)_{t>0}$\ be a family of continuous linear operators from $L^2(X,m)$\ to $L^2_{\scaleto{\mbox{loc}}{5pt}}(\Omega,\mu_\Omega)$\ that satisfies
\begin{itemize}
    \item the map $L^2(X,m)\rightarrow L^2_{\scaleto{\mbox{loc}}{5pt}}(I\times\Omega,\,dtd\mu_\Omega)$, $g\mapsto S_tg$\ is continuous;
    \item for any $g\in L^2(X,m)$, the restriction of $u(t,x):=S_tg(x)$\ on $I\times \Omega$\ is a local weak solution of $(\partial_t+P_\Omega)u=0$\ on $I\times \Omega$. $-P_\Omega$\ denotes the generator of $(\mathcal{E}_\Omega,\mathcal{F}_\Omega)$.
\end{itemize}
Then there exists a measurable function $s(t,x,y)$\ defined a.e. on $I\times \Omega\times X$, satisfying
that the function
$(t,x)\mapsto ||s^{t,x}||_{L^2(X)}$\ belongs to $L_{\scaleto{\mbox{loc}}{5pt}}^\infty(I\times\Omega)$,
and that for any $g\in L^2(X,m)$,
\begin{eqnarray*}
		S_tg(x)=\int_X s(t,x,y)g(y)\,dm(y).
	\end{eqnarray*}
\end{theorem}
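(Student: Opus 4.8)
The plan is to realize the kernel $s(t,x,y)$ as the composition of the given operators $S_t$ with the known local structure provided by Theorem \ref{mainthm0} and Corollary \ref{continuitythm}. The key observation is that for fixed test data the map $g \mapsto S_t g$ takes $L^2(X,m)$ into local weak solutions of $(\partial_t+P_\Omega)u=0$ on $I\times\Omega$, so by Theorem \ref{mainthm0} (after passing to a restriction form on a precompact $V\Subset\Omega$ as in the proof of Theorem \ref{mainthm}, so that global ultracontractivity holds) we obtain the \emph{a priori} estimate
\begin{eqnarray*}
\left\|\overline{\psi}S_tg\right\|_{L^\infty(I\times\Omega)} \leq C\left\|\overline{\eta}S_tg\right\|_{L^2(I\rightarrow\mathcal{F}_\Omega)} + C\left\|\overline{\Psi}S_tg\right\|_{L^2(I\rightarrow\mathcal{F}_\Omega)},
\end{eqnarray*}
for nice product cut-off functions supported in $I\times\Omega$. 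Combining this with the continuity of $g\mapsto S_tg$ from $L^2(X,m)$ into $L^2_{\mathrm{loc}}(I\times\Omega,\,dtd\mu_\Omega)$, and the fact that a local weak solution on $I\times\Omega$ is controlled in $L^2(I\rightarrow\mathcal{F}_\Omega)$ on slightly smaller sets by its $L^2(I\times\Omega)$ norm (Caccioppoli-type inequality, which follows from the cut-off assumption as in \cite{L2}), one gets that for every $J\times V\Subset I\times\Omega$,
\begin{eqnarray*}
\|S_tg\|_{L^\infty(J\times V)} \leq C(J,V)\,\|g\|_{L^2(X,m)}.
\end{eqnarray*}

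Given this uniform bound, the construction of the kernel is a routine application of a Dunford--Pettis / Riesz-representation argument, exactly as cited after Theorem \ref{mainthm0} via \cite{DunfordPettis}. For a fixed pair $(t,x)$ with $t\in I$ and $x$ a Lebesgue point (of full measure), the linear functional $g\mapsto S_tg(x)$ is bounded on $L^2(X,m)$ with norm $\leq C(J,V)$ locally uniformly in $(t,x)$; by Riesz representation there is $s^{t,x}\in L^2(X,m)$ with $S_tg(x)=\int_X s^{t,x}(y)g(y)\,dm(y)$ and $\|s^{t,x}\|_{L^2(X)}\leq C(J,V)$. To make $s(t,x,y)$ jointly measurable, I would fix a countable dense subset $\{g_k\}\subset L^2(X,m)$, note $(t,x)\mapsto S_tg_k(x)$ is measurable on $I\times\Omega$ (it is a local weak solution, hence lies in $L^2_{\mathrm{loc}}$, and by Theorem \ref{mainthm0} has a locally bounded representative), and then realize $s$ as a pointwise a.e.\ limit of finite-dimensional approximations built from the Gram--Schmidt procedure applied to $\{g_k\}$; alternatively, invoke the measurable selection / vector-valued Dunford--Pettis theorem directly on the bounded operator family. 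The estimate $(t,x)\mapsto\|s^{t,x}\|_{L^2(X)}\in L^\infty_{\mathrm{loc}}(I\times\Omega)$ is then immediate from the uniform bound above.

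The main obstacle I anticipate is \emph{joint measurability of $s(t,x,y)$} and the justification that the pointwise-defined functional $g\mapsto S_tg(x)$ is well-defined for a.e.\ $(t,x)$ — that is, upgrading the \emph{a priori} $L^\infty_{\mathrm{loc}}$ bound on $S_tg$ (which a priori depends on the chosen representative of the local weak solution) to a genuine pointwise-defined family of functionals that is measurable in $(t,x)$. This requires care because $S_tg$ is only defined up to a $dtd\mu_\Omega$-null set for each $g$, and one must exchange "null set depending on $g$" with "null set valid for all $g$ simultaneously"; the separability of $L^2(X,m)$ is what makes this possible, letting one first define $s$ against a countable dense family and then extend by the uniform bound and density. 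A secondary technical point is the reduction to global ultracontractivity via restriction forms: one must check that $u|_{I\times V}=S_t g|_{I\times V}$ remains a local weak solution for the restriction form $(\mathcal{E}_{\Omega,V},\mathcal{F}_{\Omega,V})$, which follows exactly as in the proof of Theorem \ref{mainthm} since $\mathcal{F}_{\Omega,V,c}(I\times V)=\mathcal{F}_{\Omega,c}(I\times V)$ under Assumption \ref{cutoff}, and that the off-diagonal bounds and ultracontractivity pass to the restriction via the domination $H_t^{\Omega,V}f\leq H_t^\Omega f$ for $f\geq 0$.
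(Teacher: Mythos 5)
Your proposal is correct and follows essentially the same route as the paper: the a priori $L^\infty$ bound (\ref{keyest}) from the proof of Theorem \ref{mainthm0}/\ref{mainthm}, a Caccioppoli-type estimate from \cite{L2} to control the $L^2(I\rightarrow\mathcal{F}_\Omega)$ norms by a local $L^2$ norm, the continuity hypothesis on $g\mapsto S_tg$ to obtain $\|S_tg\|_{L^\infty(J\times V)}\leq C(J,V)\|g\|_{L^2(X,m)}$, and then the Dunford--Pettis theorem on each $J\times V\Subset I\times\Omega$ to produce the kernel. The measurability/representative issues you flag are exactly what the cited Dunford--Pettis theorem handles, so no gap remains.
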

\begin{proof}
By the proofs of Theorems \ref{mainthm} and \ref{mainthm0} (see (\ref{keyest})), we know that for any $J\Subset I$, $V\Subset \Omega$, there exist a constant $C(J,V)>0$\ and nice product cut-off functions $\overline{\eta},\overline{\Psi}$\ that equal to $1$\ on $J\times V$\ and are compactly supported in $I\times \Omega$, such that for any local weak solution $u$\ of $(\partial_t+P_\Omega)u=0$\ on $I\times \Omega$,
\begin{eqnarray}
\label{5.1.1}
||u||_{L^\infty(J\times V)}\leq C(J,V)\left[||\overline{\Psi}u||_{L^2(I\rightarrow \mathcal{F}_\Omega)}+||\overline{\eta}u||_{L^2(I\rightarrow \mathcal{F}_\Omega)}\right].
\end{eqnarray}
Note that the norm $||\cdot||_{L^2(I\rightarrow \mathcal{F}_\Omega)}$\ is given by
\begin{eqnarray*}
||v||_{L^2(I\rightarrow \mathcal{F}_\Omega)}=\left(\int_I\mathcal{E}_\Omega(v^t,v^t)\,dt+\int_I\int_\Omega v^2\,d\mu_\Omega dt\right)^{1/2},
\end{eqnarray*}
for any $v\in L^2(I\rightarrow \mathcal{F}_\Omega)$. In the rest of the proof we write $||\cdot||_{L^2(I\rightarrow \mathcal{F})}$\ instead for short.
By assumption, $u:=S_tg$\ is a local weak solution of $(\partial_t+P_\Omega)u=0$\ on $I\times \Omega$. We show that for this particular $u$, $||\overline{\Psi}u||_{L^2(I\rightarrow \mathcal{F})}\lesssim ||g||_{L^2(X,m)}$. The estimate for $||\overline{\eta}u||_{L^2(I\rightarrow \mathcal{F})}$\ is similar. Recall that we use $I_{\overline{\Psi}}\times U_{\overline{\Psi}}$\ to represent an open set that contains the support of $\overline{\Psi}$\ and is precompact in $I\times\Omega$. Since $\overline{\Psi}$\ is a nice product cut-off function,
\begin{eqnarray*}
\lefteqn{||\overline{\Psi}u||^2_{L^2(I\rightarrow \mathcal{F})}}\\
&\leq& K\left(\int_I\int_\Omega\overline{\Psi}^2\,d\Gamma(u,u)\,dt+\int_I\int_\Omega(\overline{\Psi}u)^2\,dkdt+\int_{I_{\overline{\Psi}}}\int_{U_{\overline{\Psi}}}u^2\,d\mu_\Omega dt\right)
\end{eqnarray*}
for some constant $K=K(\overline{\Psi})>0$. Here $\Gamma,k$\ denote the energy measure and killing measure associated with $(\mathcal{E}_\Omega,\mathcal{F}_\Omega)$. Moreover, by the proof of Proposition 6.9 in \cite{L2}, there exist some open set $I'\times U'$\ with $I_{\overline{\Psi}}\times U_{\overline{\Psi}}\Subset I'\times U'\Subset I\times\Omega$\ and some constant $K'(I_{\overline{\Psi}},U_{\overline{\Psi}},I',U')>0$, such that
\begin{eqnarray*}
\int_I\int_\Omega\overline{\Psi}^2\,d\Gamma(u,u)\,dt+\int_I\int_\Omega(\overline{\Psi}u)^2\,dkdt\leq K'\int_{I'}\int_{U'}u^2\,d\mu_\Omega dt.
\end{eqnarray*}
Altogether, for some $K_0>0$,
\begin{eqnarray}
\label{5.1.2}
||\overline{\Psi}u||^2_{L^2(I\rightarrow \mathcal{F})}\leq K_0\int_{I'}\int_{U'}u^2\,d\mu_\Omega dt.
\end{eqnarray}
Moreover, the map $g\mapsto S_tg$\ by assumption is continuous from $L^2(X,m)$\ to $L^2_{\scaleto{\mbox{loc}}{5pt}}(I\times\Omega,\,dtd\mu_\Omega)$, hence for $I'\times U'\Subset I\times \Omega$, there exists some $C'(I',U')>0$\ such that
\begin{eqnarray}
\left(\int_{I'}\int_{U'}(S_tg)^2\,d\mu_\Omega dt\right)^{1/2}\leq C'(I',U')||g||_{L^2(X,m)}. \label{5.1.3}
\end{eqnarray}
From (\ref{5.1.1}), (\ref{5.1.2}), and (\ref{5.1.3}), we conclude that
\begin{eqnarray*}
||S_tg||_{L^\infty(J\times V)}=||u||_{L^\infty(J\times V)}\leq M(J,V)||g||_{L^2(X,m)}
\end{eqnarray*}
for some constant $M(J,V)>0$. By the Dunford-Pettis Theorem (cf. e.g. \cite{DunfordPettis}), there exists a function $s_{\scaleto{J\times V}{5pt}}(t,x,y)$\ defined a.e. on $J\times V\times X$, satisfying that
\begin{eqnarray*}
\esssup_{(t,x)\in J\times V}\left(\int s_{\scaleto{J\times V}{5pt}}(t,x,y)^2\,dm(y)\right)^{1/2}\leq M(J,V),
\end{eqnarray*}
such that for any $g\in L^2(X,m)$, for a.e. $(t,x)\in J\times V$,
\begin{eqnarray*}
S_tg(x)=\int_X s_{\scaleto{J\times V}{5pt}}(t,x,y)g(y)\,dm(y).
\end{eqnarray*}
Since the open subset $J\times V\Subset I\times\Omega$\ is arbitrary, we conclude that there exists a function $s(t,x,y)$\ defined a.e. on $I\times\Omega\times X$, satisfying the two conditions stated in the theorem. That is, the function
$(t,x)\mapsto ||s^{t,x}||_{L^2(X,m)}$\ belongs to $L_{\scaleto{\mbox{loc}}{5pt}}^\infty(I\times\Omega)$; for any $g\in L^2(X,m)$, $S_tg(x)=\int_X s(t,x,y)g(y)\,dm(y)$.
\end{proof}
\begin{remark}
When $\Omega=X$\ and $S_t$\ is self-adjoint, the density function $s(t,x,y)$\ is locally bounded on $I\times X\times X$. In particular, Theorem \ref{densitythm} applied to $S_t=H_t$\ and $\mu=m$\ leads to the conclusion that local ultracontractivity of the semigroup $H_t$, together with the off-diagonal upper bound of $H_t$\ and the existence of nice cut-off functions, imply the existence of a locally bounded heat kernel.
\end{remark}
\subsection{Ancient local weak solutions}
One other application of the main theorem and especially the estimate (\ref{keyest}) is the following proposition which improves the preceding $L^2$\ structure result in \cite{L2} of ancient local weak solutions of heat equations to a pointwise result, under stronger conditions. See \cite{app1,app2analyticity} for structure results for ancient solutions of classical heat equations on Riemannian manifolds. Let $(X,m,\mathcal{E},\mathcal{F})$\ be a Dirichlet space where the Dirichlet form is symmetric regular local. Let $-P$\ be the generator of the Dirichlet form. An {\em ancient local weak solution} $u$\ of $(\partial_t+P)u=0$\ is any local weak solution of the heat equation on $(-\infty,b)\times X$\ for some $b>0$. We first review an additional assumption we made on the existence of cut-off functions in \cite{L2}. Intuitively, this assumption says that there is a sequence of increasing open sets far enough apart, so that there are nice cut-off functions for each pair of adjacent open sets with small enough $C_2$'s. The precise assumption (taking $C_2=1$) is as follows. Note that when the space $X$\ is compact, this assumption is automatically satisfied by taking all sets to be $X$.
\begin{assumption}
	\label{appassumption}
	For any $C_1>0$, there exist
	\begin{itemize}
		\item[(i)] an exhaustion of $X$, $\{W_{C_1,i}\}_{i\in \mathbb{N}_+}=:\{W_{i}\}_{i\in \mathbb{N}_+}$. That is, $\{W_{i}\}_{i\in \mathbb{N}_+}$\ is a sequence of increasing open sets, satisfying
		\begin{eqnarray*}
		W_{i}\Subset W_{i+1},\ \bigcup_{i=1}^{\infty}W_{i}=X.
		\end{eqnarray*}
		\item[(ii)] a sequence of cut-off functions $\{\varphi_{i}\}_{i\in \mathbb{N}_+}$, where each $\varphi_i$\ is a cut-off function for the pair $W_{i}\subset W_{i+1}$, i.e., $\varphi_i=1$\ on $W_{i}$, $\mbox{supp}\{\varphi_i\}\subset W_{i+1}$. $\varphi_i$\ further satisfies that for any $v\in \mathcal{F}$,
		\begin{eqnarray}
		\label{cut-offappineq}
		\int_Xv^2\,d\Gamma(\varphi_i,\varphi_i)\leq C_1 \int_X \varphi_i^2\,d\Gamma(v,v)+\int_{\scaleto{\mbox{supp}\{\varphi_i\}}{5pt}}v^2\,dm.
		\end{eqnarray}
	\end{itemize}
\end{assumption}
Under this further assumption, when the conditions in Theorem \ref{modelthm} are satisfied, we show the following pointwise structure theorem on ancient local weak solutions with exponential growth of the heat equation.
\begin{proposition}
Let $(X,m)$\ be a metric measure space and $(\mathcal{E},\mathcal{F})$\ be a symmetric regular local Dirichlet form on $X$. Assume that the Dirichlet space $(X,\mathcal{E},\mathcal{F})$\ satisfies Assumption \ref{cutoff}, and when $X$\ is not compact, further satisfies Assumption \ref{appassumption}. Let $(H_t)_{t>0}$\ and $-P$\ be the corresponding semigroup and generator. Assume the semigroup is locally ultracontractivite (\ref{localultraassumption}) and satisfies the $L^\infty$\ off-diagonal upper bound (\ref{globalGaussian})(\ref{mcontrolpoly}).

Let $u$\ be any ancient local weak solution of $(\partial_t+P)u=0$. Suppose $u$\ satisfies the $L^2$\ exponential growth condition, namely, there exists some $c_u>0$, such that for any $T>1$, any $i\in \mathbb{N}_+$,
\begin{eqnarray}
\label{expgrowthbound}
\int_{[-T,0]\times W_i}|u(t,x)|^2\,dmdt\leq e^{c_u(T+i)}.
\end{eqnarray}
Then $u$\ is analytic in $t\in (-\infty,0]$, in the sense that
\begin{eqnarray*}
\sum_{i=0}^{k}\frac{\partial_t^iu(0,x)}{i!}t^i\rightarrow u(t,x)
\end{eqnarray*}
in $L^\infty_{\scaleto{\mbox{loc}}{5pt}}((-\infty,0]\times X)$\ as $k\rightarrow \infty$.
\end{proposition}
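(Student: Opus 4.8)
The plan is to upgrade the $L^2_{\mathrm{loc}}$ Taylor--convergence statement for ancient solutions from \cite{L2} to an $L^\infty_{\mathrm{loc}}$ statement, with the local boundedness estimate (\ref{keyest}) serving as the bridge. The key structural fact is that every time derivative $v_k:=\partial_t^k u$ is again an ancient local weak solution, this time of $(\partial_t+P)v_k=0$ (Theorem 4.1 of \cite{L2}, whose hypotheses hold here by Assumption \ref{cutoff} and (\ref{globalGaussian})--(\ref{mcontrolpoly})); so (\ref{keyest}), with vanishing right-hand side, applies verbatim to each $v_k$. The whole matter therefore reduces to bounding, for each precompact space--time box, the $L^2(I\rightarrow\mathcal F)$ norm of $v_k$ on a slightly larger box by a sequence that grows only \emph{geometrically} in $k$ (not factorially); dividing by $k!$ and summing then produces a series converging on all of $(-\infty,0]$, which is what ``analytic in $t$'' means here.

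\textbf{Reduction to the globally ultracontractive case.} Since only local ultracontractivity (\ref{localultraassumption}) is assumed, I would first fix, for each space--time box under consideration, a precompact open $\Omega\Subset X$ containing the relevant space region and pass to the restriction form $(\mathcal E_0^\Omega,\mathcal F_0^\Omega)$ of Example \ref{restrictionformeg}. As recorded in the proof of Theorem \ref{mainthm}, its semigroup is globally ultracontractive on $\Omega$ and still satisfies the $L^\infty$ off-diagonal bound (\ref{globalGaussian})--(\ref{mcontrolpoly})---and here only the orders $n=0,1$ are needed, which is all the proof of Theorem \ref{mainthm0} actually uses---while, because Assumption \ref{cutoff} holds, $u$ restricted to the relevant $I\times\Omega$ is a local weak solution of the restriction-form heat equation. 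Thus (\ref{keyest}) is available on $\Omega$, and so is the conclusion $u\in W^{k,\infty}_{\mathrm{loc}}(I\rightarrow L^\infty)$ for every $k$ (Theorem \ref{mainthm0} with $f\equiv 0$); in particular $\partial_t^k u\in W^{1,\infty}_{\mathrm{loc}}(I\rightarrow L^\infty)$ is continuous in time, so the quantities $\partial_t^j u(0,\cdot)$ are legitimate elements of $L^\infty_{\mathrm{loc}}(X)$ and $t\mapsto u(t,\cdot)$ is a $C^\infty$ curve in $L^\infty_{\mathrm{loc}}(X)$.

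\textbf{The $L^2$ input.} This is the substantive ingredient, and it is exactly the preceding result of \cite{L2}: the exponential growth hypothesis (\ref{expgrowthbound}), fed through the exhaustion $\{W_i\}$ and the cut-offs $\{\varphi_i\}$ of Assumption \ref{appassumption}, yields by an iterated energy (Caccioppoli-type) estimate---each iteration costing a fixed multiplicative constant and enlarging the space--time domain by a bounded amount, so that after $k$ steps the $e^{c_u(T+i)}$ growth of $u$ is absorbed into a single geometric factor---bounds of the form: for each $T>1$ and $i\in\mathbb N_+$ there are $C(T,i)>0$ and $A=A(c_u,\dots)>0$ with
\[
\|\partial_t^k u\|_{L^2([-T,0]\rightarrow\mathcal F(W_i))}\le C(T,i)\,A^{k},\qquad k\in\mathbb N.
\]
I would invoke this (via the proof of the $L^2$ structure result in \cite{L2}) rather than reprove it.

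\textbf{Combining and concluding.} Plugging these $L^2(I\rightarrow\mathcal F)$ bounds into (\ref{keyest}) applied to $v_k=\partial_t^k u$ on the restriction form gives $\|\partial_t^k u\|_{L^\infty([-T,0]\times W_i)}\le C'(T,i)\,(A')^{k}$; by the continuity in time noted above this is a genuine pointwise-in-$t$ bound, in particular at $t=0$. Taylor's formula with integral remainder for the $C^\infty$ curve $t\mapsto u(t,\cdot)\in L^\infty_{\mathrm{loc}}$ then yields, on $[-T,0]\times W_i$,
\[
\Bigl\|\,u(t,\cdot)-\sum_{j=0}^{k}\tfrac{\partial_t^j u(0,\cdot)}{j!}\,t^j\Bigr\|_{L^\infty(W_i)}
=\Bigl\|\int_0^t\tfrac{(t-s)^k}{k!}\,\partial_t^{k+1}u(s,\cdot)\,ds\Bigr\|_{L^\infty(W_i)}
\le \tfrac{T^{k+1}}{(k+1)!}\,C'(T,i)\,(A')^{k+1}\ \xrightarrow[k\to\infty]{}\ 0 .
\]
Since $T$ and $i$ are arbitrary this is precisely convergence in $L^\infty_{\mathrm{loc}}((-\infty,0]\times X)$. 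I expect the main obstacle to be conceptually the geometric-in-$k$ derivative bound of the previous step---but that is already supplied by \cite{L2}, so in the present paper the real work is bookkeeping: the constant in (\ref{keyest}) depends on the chosen cut-off functions and hence deteriorates as the window $[-T,0]\times W_i$ grows, so one obtains convergence only box by box (which is exactly what $L^\infty_{\mathrm{loc}}$ demands), and one must check that the restriction-form reduction can be performed on a fixed $\Omega$ for each such box and that the resulting constants are finite uniformly in $k$ (they are, since only $G(0,\cdot)$ and $G(1,\cdot)$ enter).
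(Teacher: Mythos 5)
Your proposal is correct and follows essentially the same route as the paper: local smoothness in time via Theorem \ref{modelthm}, the estimate (\ref{keyest}) to pass from $L^\infty$ to $L^2(I\rightarrow\mathcal F)$ norms of $\partial_t^{k+1}u$, the iterated $L^2$ bound from \cite{L2} (Proposition 6.8 there, with cost $20^{k+1}$ and a domain growing linearly in $k$) combined with the exponential growth hypothesis to get geometric-in-$k$ derivative bounds, and the Taylor remainder $T^{k+1}/k!$ to conclude. The only differences are presentational (you make the restriction-form reduction explicit and fold the growth bound into the constant $A^k$, whereas the paper keeps the enlarged window $(T'-2k-2,0]\times W_{n_0+3k+3}$ explicit before applying (\ref{expgrowthbound})).
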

\begin{proof}
By Theorem \ref{modelthm} (a special case of Theorem \ref{mainthm}), $u$\ locally belongs to $C^\infty((-\infty,0]\rightarrow L^\infty(X))$. By the Taylor expansion formula in $t$, for any $t<0$,
\begin{eqnarray*}
	u(t,x)=\sum_{i=0}^{k}\frac{\partial_t^iu(0,x)}{i!}t^i+\int_{0}^{t}\partial_s^{k+1}u(s,x)\frac{(t-s)^k}{k!}\,ds
\end{eqnarray*}
in $L_{\scaleto{\mbox{loc}}{5pt}}^\infty(X)$. This implies that for any precompact open set $V\subset X$\ and any $I=(-T,0]$\ where $T>0$,
\begin{eqnarray}
\label{5.2.1}
\esssup_{(t,x)\in I\times V}\left|u(t,x)-\sum_{i=0}^{k}\frac{\partial_t^iu(0,x)}{i!}t^i\right|\leq\frac{T^{k+1}}{k!}\esssup_{(t,x)\in I\times V}\left|\partial_s^{k+1}u(s,x)\right|.
\end{eqnarray}
As in the proof of Theorem \ref{densitythm}, estimate (\ref{keyest}) together with Proposition 6.8 in \cite{L2} indicate that for $I\times V$, there exist some open set $U$\ with $V\Subset U\Subset X$\ and some finite interval $I'=(-T',0]$\ with $0<T<T'$, there exists some constant $C(T,T',V,U)>0$, such that
\begin{eqnarray*}
\lefteqn{\esssup_{(s,x)\in I\times V}\left|\partial_s^{k+1}u(s,x)\right|
\leq C(T,T',V,U)\left(\int_{I'\times U}(\partial_s^{k+1}u(s,x))^2\,dmds\right)^{1/2}}\\
&\leq& C(T,T',V,U)\cdot 20^{k+1}\left(\int_{(T'-2k-2,\,0]\times W_{n_0+3k+3}}(u(s,x))^2\,dmds\right)^{1/2}.
\end{eqnarray*}
Here $n_0$\ is some integer so that $U\subset W_{n_0}$. Applying the exponential growth bound (\ref{expgrowthbound}), we have for some $C>0$,
\begin{eqnarray*}
\esssup_{(s,x)\in I\times V}\left|\partial_s^{k+1}u(s,x)\right|<C^{k+1}e^{c_u(|T'|+n_0+5k+5)}.
\end{eqnarray*}
It follows that the upper bound in (\ref{5.2.1}) tends to $0$\ as $k$\ tends to infinity. Hence the proposition holds.
\end{proof}
\section{Appendix}
\setcounter{equation}{0}
In this section we discuss one case where the $L^\infty$\ off-diagonal upper bound follows from the ultracontractivity property of the semigroup. Let $(X,m)$\ be a metric measure space as before. Let $(\mathcal{E},\mathcal{F})$\ be a symmetric regular local Dirichlet form on $L^2(X,m)$. Assume that the Dirichlet space satisfies Assumption \ref{cutoff} (existence of nice cut-off functions), and that there exists a distance function $\rho_X$\ on $X$\ satisfying
\begin{itemize}
\item $\rho_X$\ is continuous and defines the topology of $X$;
\item there exist two numbers $\alpha\geq 0,\,\beta>0$\ such that for any $V\Subset U\Subset X$, any $0<C_1<1$, there exists a nice cut-off function $\eta$\ for the pair $V\subset U$, corresponding to constants $C_1$\ and $C_2=C_1^{-\alpha}\rho_X(V,U^c)^{-\beta}$. That is, $\eta\equiv 1$\ on $V$, $\mbox{supp}\{\eta\}\subset U$, and for any $v\in \mathcal{F}$,
\begin{eqnarray*}
\int_Xv^2\,d\Gamma(\eta,\eta)\leq C_1\int_X\eta^2\,d\Gamma(v,v)+C_1^{-\alpha}\rho_X(V,U^c)^{-\beta}\int_{\scaleto{\mbox{supp}\{\eta\}}{5pt}}v^2\,dm.
\end{eqnarray*}
Here for any two measurable sets $\Omega_1,\Omega_2$, $\rho_X(\Omega_1,\Omega_2)$\ denotes the distance between the two sets induced by the pointwise distance $\rho_X$.
\end{itemize}
\begin{theorem}
\label{appendixprop}
Under the above hypotheses, assume further that the Dirichlet form's corresponding semigroup $H_t$\ is globally ultracontractive. More precisely, assume that there exist some interval $(0,T)$\ with $T>0$\ and some continuous non-increasing function $M(t):(0,T)\rightarrow \mathbb{R}_+$\ satisfying
\begin{eqnarray}
\label{M(t)}
\lim_{t\rightarrow 0^+}t^{\frac{1}{1+2\alpha}}M(t)=0,
\end{eqnarray}
such that
\begin{eqnarray*}
||H_t||_{L^2(X)\rightarrow L^\infty(X)}\leq e^{M(t)}.
\end{eqnarray*}
Then the following $L^\infty$\ off-diagonal upper bound holds: for any two open sets $U,V\Subset X$\ with $\rho_X(U,V)>0$, for any $u,v\in L^1(X)$\ with $\mbox{supp}\{u\}\subset U$, $\mbox{supp}\{v\}\subset V$,
\begin{eqnarray}
\left|<H_{2t}u,\,v>\right|\leq C_0\exp{\left\{-\frac{1}{4}\left(\frac{b^{\beta}\rho_X(U,V)^\beta}{4^{1+\alpha+\beta}ct}\right)^{\frac{1}{1+2\alpha}}\right\}}||u||_{L^1(U)}||v||_{L^1(V)}.
\end{eqnarray}
Here $C_0>0$\ is some constant independent of $u,v$; $b,c>0$\ are two small constants satisfying 
\begin{eqnarray*}
c\leq \left((1+4^{1+\alpha})\sum\limits_{m=1}^{\infty}m^{-(2\beta+1+2\alpha)}\right)^{-1};
\end{eqnarray*}
\begin{eqnarray*}
\left(1-b\sum_{m=1}^{\infty}m^{-2}\right)^\beta\left(1-c\sum_{m=1}^{\infty}m^{-(2\beta+1+2\alpha)}\right)^{-1/(1+2\alpha)}\geq 2.
\end{eqnarray*}
\end{theorem}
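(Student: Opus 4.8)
The plan is to deduce the $L^1$--$L^1$ statement from an $L^1\!\to\!L^2$ off-diagonal estimate for a single $H_t$, and to prove the latter by an iterated Caccioppoli (energy) argument run along a long chain of nice cut-off functions whose geometric and analytic parameters are tuned exactly to the sequences $\{m^{-2}\}$ and $\{m^{-(2\beta+1+2\alpha)}\}$ appearing in the hypotheses. Since $H_{2t}$ is positivity preserving we may assume $u,v\ge 0$, and it suffices to argue for $u,v\in L^1\cap L^2$ (the general case follows by density, the left side being continuous in each argument in the $L^1$ topology because $\|H_{2t}v\|_\infty\le e^{M_1(2t)}\|v\|_1$ by ultracontractivity). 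Writing $\langle H_{2t}u,v\rangle=\langle H_tu,H_tv\rangle$ and choosing the midway set $B:=\{x:\rho_X(x,U)>\tfrac12\rho_X(U,V)\}$ (so $V\subseteq B$, $U\cap B=\emptyset$, and $\rho_X(U,B),\rho_X(V,B^c)\ge\tfrac12\rho_X(U,V)$), Cauchy--Schwarz and the dual ultracontractive bound $\|H_t\|_{1\to 2}=\|H_t\|_{2\to\infty}\le e^{M(t)}$ give
\[
\bigl|\langle H_{2t}u,v\rangle\bigr|\le e^{M(t)}\bigl(\|v\|_{1}\,\|1_{B}H_{t}u\|_{2}+\|u\|_{1}\,\|1_{B^{c}}H_{t}v\|_{2}\bigr).
\]
Hence it is enough to prove, for $g\ge 0$ in $L^1\cap L^2$ supported in an open set $A$ and an open set $D$ with $\rho_X(A,D)\ge s$, the core bound $\|1_D H_t g\|_2\le C_0\exp\bigl\{-\tfrac12(b^\beta s^\beta/(4^{1+\alpha+\beta}ct))^{1/(1+2\alpha)}\bigr\}\|g\|_1$; applied with $(A,D,s)=(U,B,\tfrac12\rho_X(U,V))$ and $(V,B^c,\tfrac12\rho_X(U,V))$, the lost factor $2$, the subexponential factor $e^{M(t)}$ (harmless by $\lim_{t\to0^+}t^{1/(1+2\alpha)}M(t)=0$, cf. (\ref{M(t)})) and the remaining constants are absorbed into $C_0$ and into replacing $\tfrac12$ by $\tfrac14$ in the exponent.

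\textbf{The iterated Caccioppoli scheme.} For the core bound set $w(\sigma):=H_\sigma g$; by ultracontractivity $w(\sigma)\in L^\infty$ for $\sigma>0$ with $\|w(\sigma)\|_1\le\|g\|_1$. Build a decreasing family of open sets $D\subseteq\Omega_k\subsetneq\cdots\subsetneq\Omega_0$ with $\Omega_0$ still avoiding $A$ and $\rho_X(\Omega_m,\Omega_{m-1}^c)\ge b\,s\,m^{-2}$; since $b\sum_{m\ge1}m^{-2}<1$ (the first displayed condition on $b$) these sets fit between $D$ and $A$, e.g.\ as superlevel sets of the $1$-Lipschitz function $\rho_X(\cdot,A)$. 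By Assumption \ref{cutoff} pick for each $m$ a nice cut-off $\eta_m$ for the pair $\Omega_m\subseteq\Omega_{m-1}$ with parameter $C_1^{(m)}\in(0,\tfrac18)$ still to be chosen and $C_2^{(m)}=(C_1^{(m)})^{-\alpha}\rho_X(\Omega_m,\Omega_{m-1}^c)^{-\beta}$, and put $\phi_m(\sigma):=\int_X\eta_m^2\,w(\sigma)^2\,dm$. Differentiating and using $\partial_\sigma w=-Pw$, the Leibniz and chain rules for $\Gamma$, and the gradient inequality (\ref{gradineq}) (which is why $C_1^{(m)}<\tfrac18$), one obtains for every $m$
\[
\phi_m'(\sigma)+\mathcal{E}\bigl(\eta_m w(\sigma),\eta_m w(\sigma)\bigr)\ \le\ 2C_2^{(m)}\!\!\int_{\operatorname{supp}\eta_m}\!\!w(\sigma)^2\,dm\ \le\ 2C_2^{(m)}\,\phi_{m-1}(\sigma),
\]
since $\operatorname{supp}\eta_m\subseteq\{\eta_{m-1}=1\}$; moreover $\phi_m(0)=0$ because $\operatorname{supp}\eta_m$ avoids $A$. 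The energy term is bounded below by the Nash-type inequality $\|f\|_2^2\le 2\theta\,\mathcal{E}(f,f)+2e^{M_1(\theta)}\|f\|_1^2$ (valid for $0<\theta<T$, a standard consequence of $\|f-H_\theta f\|_2^2\le\theta\mathcal{E}(f,f)$ and ultracontractivity), applied to $f=\eta_m w(\sigma)$ with a step-dependent $\theta=\theta_m$; since $\|\eta_m w\|_2^2\ge\phi_{m+1}$ and $\|\eta_m w\|_1\le\|g\|_1$ this turns the chain into coupled differential inequalities in $\phi_{m-1},\phi_m,\phi_{m+1}$ and $\|g\|_1$, and the innermost level is closed off using the same ingredients together with $\|w(\sigma)\|_\infty\le e^{M_1(\sigma)}\|g\|_1$.

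\textbf{Iteration and optimization.} Running these inequalities from $m=k$ downward, each of the $k$ steps contributes a factor $2C_2^{(m)}$ and a time integration, and $\phi_m(0)=0$ produces a $t^k/k!$, so $\phi_k(t)=\|1_{\Omega_k}H_tg\|_2^2$ is bounded by $\|g\|_1^2$ times $\bigl(\prod_{m=1}^k 2C_2^{(m)}\bigr)t^k/k!$ (plus the analogous terms generated by the $\|g\|_1^2$ sources). Choosing $C_1^{(m)}$ comparable to $m^{-2}$ and $\theta_m$ comparable to $t$ times a power of $m$ — so that all ultracontractive loss factors $e^{M_1(\theta_m)}$ stay summably controlled, which is precisely where $\lim_{t\to0^+}t^{1/(1+2\alpha)}M(t)=0$ enters and pins the exponent $1/(1+2\alpha)$ — one has $C_2^{(m)}\asymp c^{-\alpha}b^{-\beta}s^{-\beta}m^{2\alpha+2\beta}$, so that $\prod_{m\le k}2C_2^{(m)}\cdot t^k/k!$ is controlled through the convergence of $\sum_m m^{-(2\beta+1+2\alpha)}$ (the origin of the displayed condition on $c$), and optimizing over $k\asymp(b^\beta s^\beta/(4^{1+\alpha+\beta}ct))^{1/(1+2\alpha)}$ — the second displayed condition, $(1-b\sum m^{-2})^\beta(1-c\sum m^{-(2\beta+1+2\alpha)})^{-1/(1+2\alpha)}\ge 2$, guaranteeing enough room to reach this $k$ — yields the stated stretched-exponential decay. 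Feeding the two applications of the core bound back into the splitting of $\langle H_tu,H_tv\rangle$ concludes the proof.

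\textbf{Main obstacle.} The hard part is the calibration in the last step: selecting the two sequences $(C_1^{(m)})_m$ and $(\theta_m)_m$ so that simultaneously (i) the energy absorption in each Caccioppoli step is legitimate ($C_1^{(m)}<\tfrac18$), (ii) every ultracontractive loss $e^{M_1(\theta_m)}$ is summably controlled, and (iii) the product of the $2C_2^{(m)}$ against $t^k/k!$, after optimizing the number of steps $k$, reproduces exactly $\exp\{-\tfrac14(b^\beta\rho_X(U,V)^\beta/(4^{1+\alpha+\beta}ct))^{1/(1+2\alpha)}\}$; the two explicit conditions on $b$ and $c$ in the statement are precisely what makes this balancing act consistent, and verifying them is the technical heart of the argument.
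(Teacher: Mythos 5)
Your outer reduction (writing $\langle H_{2t}u,v\rangle=\langle H_tu,H_tv\rangle$, splitting with a midway set, using $\|H_t\|_{1\to2}\le e^{M(t)}$, and absorbing the subexponential loss via (\ref{M(t)})) is exactly the paper's final step. The genuine gap is in the core $L^1\to L^2$ off-diagonal bound, where you replace the paper's mechanism — iterating the already-available $L^2$ off-diagonal estimate (\ref{lem}) from Lemma 8.1 of \cite{L2} together with ultracontractivity applied only at times bounded below by a fixed fraction $\delta t$ of $t$ (Lemmas \ref{lem1} and \ref{lemma2}) — by a from-scratch Caccioppoli/Nash iteration. Two steps of that iteration fail as described. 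First, the way you inject the $L^1$ norm: closing the innermost level with $\|w(\sigma)\|_\infty\le e^{M_1(\sigma)}\|g\|_1$ forces you to integrate $e^{M_1(\sigma)}$ down to $\sigma=0$, but (\ref{M(t)}) only says $M(t)=o(t^{-1/(1+2\alpha)})$, which permits $M(t)=t^{-\gamma}$ with $0<\gamma<1/(1+2\alpha)$, and then $\int_0^t e^{M_1(\sigma)}\,d\sigma=\infty$; likewise the Nash-type inequality plants a source $\theta_m^{-1}e^{M_1(\theta_m)}\|g\|_1^2$ at every shell carrying no off-diagonal decay, and at the outermost shell $m=k$ its contribution to $\phi_k(t)$, even after the damping $e^{-(t-\sigma)/2\theta_k}$ (with $\theta_k<T$ forced by the range of the Nash inequality), is only polynomially small in $t$ — it can never be beaten down to the required stretched exponential. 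The paper avoids both problems by doing the $L^1\to L^2$ conversion once, at fixed times $(1-cs_k)t\ge\delta t>0$, on top of an $L^2$--$L^2$ bound that already decays.

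Second, the calibration you describe does not produce the exponent $1/(1+2\alpha)$. With $C_1^{(m)}\asymp m^{-2}$ frozen (independent of $t$) one has $C_2^{(m)}\asymp b^{-\beta}s^{-\beta}m^{2\alpha+2\beta}$, hence $\prod_{m\le k}2C_2^{(m)}\,t^k/k!\asymp \bigl(Ct/(b^\beta s^\beta)\bigr)^k (k!)^{2\alpha+2\beta-1}$; optimizing in $k$ yields decay of the form $\exp\{-c(s^\beta/t)^{1/(2\alpha+2\beta-1)}\}$, which is strictly weaker than the statement whenever $\beta>1$, and plugging in your proposed $k\asymp (b^\beta s^\beta/ct)^{1/(1+2\alpha)}$ makes the product blow up (its logarithm is $\approx(2\beta-2)\,k\ln k>0$ for $\beta>1$). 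The exponent $1/(1+2\alpha)$ in the theorem is created by optimizing the cut-off parameter $C_1$ against $t$ through $C_2=C_1^{-\alpha}\rho_X^{-\beta}$ — that is precisely the content of the $L^2$ estimate (\ref{lem}) which you never use — whereas hypothesis (\ref{M(t)}) only serves, at the very end, to absorb $e^{M(t)}$ into $C_0$ (the passage from $\tfrac12$ to $\tfrac14$ in the exponent); it cannot "pin" the exponent in your scheme. (A smaller issue: with the midway set at distance $\tfrac12\rho_X(U,V)$, your constants reproduce the stated exponent only when $\beta\le 1+2\alpha$.) As written, the proposal therefore does not prove the theorem; repairing it would essentially amount to first establishing the $L^2$ off-diagonal bound with the $t$-optimized $C_1$, which is the route the paper takes.
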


In general, when the semigroup is only locally ultracontractive, we may consider the semigroup corresponding to a restricted form on some precompact open subset.
\subsection{Proof of Theorem \ref{appendixprop}}
From Lemma 8.1 in \cite{L2}, by taking $C_2=C_1^{-\alpha}\rho_X^{-\beta}$\ there (see Remark 8.2), approximation by precompact open sets, and the continuity of the distance function $\rho_X$, we know that for any two measurable sets $U,V\Subset X$\ with $\rho_X(U,V)>0$, for any $u,v\in L^2(X)$\ with $\mbox{supp}\{u\}\subset U$, $\mbox{supp}\{v\}\subset V$,
\begin{eqnarray}
\label{lem}
|<H_su,\,v>|\leq \exp{\left\{-\left(\frac{\rho_X(U,V)^\beta}{4^{1+\alpha}s}\right)^{1/\left(1+2\alpha\right)}\right\}}\left|\left|u\right|\right|_2\left|\left|v\right|\right|_2.
\end{eqnarray}
In below we use (\ref{lem}) and iterations to obtain two lemmas that comprise the key part of the proof for Theorem \ref{appendixprop}. The first lemma is as follows.
\begin{lemma}
	\label{lem1}
	Let $U,V$\ be precompact open sets with distance $\rho_X(U,V)=:2d$. Let $f,g$\ be functions satisfying that $\mbox{supp}\{f\}\subset U$, $\mbox{supp}\{g\}\subset V$, $f\in L^1(U)$, $g\in L^2(V)$. Then there exists some $\widetilde{V}$\ with $V\subset \widetilde{V}$\ and $\rho_X(U,\widetilde{V})\geq \left(1+\nu\right)d$, there exists some $\widetilde{g}$\ with $\mbox{supp}\left\{\widetilde{g}\right\}\subset \widetilde{V}$\ and $\left|\left|\widetilde{g}\right|\right|_{L^2(\widetilde{V})}\leq \left|\left|g\right|\right|_{L^2(V)}$, such that for all $0<t<t_0$\ where $0<t_0<1$\ is some fixed number,
	\begin{eqnarray}
	\left|\left<H_tf,\,g\right>\right|\leq C\exp{\left\{-\left(\frac{b^\beta d^\beta}{4^{1+\alpha}ct}\right)^{\frac{1}{1+2\alpha}}\right\}}||f||_1||g||_2+\left|\left<H_{\delta t}f,\,\widetilde{g}\right>\right|.\label{lem2}
	\end{eqnarray}
    Here $b,c>0$\ are any small enough numbers satisfying
	\begin{eqnarray*}
	\sum_{m=1}^{+\infty}\frac{b}{m^2}<1;\ \ c\leq \left(5\sum\limits_{m=1}^{\infty}\frac{1}{m^{2\beta+1+2\alpha}}\right)^{-1}.
    \end{eqnarray*}
	$C$\ is a constant that only depends on $d,b,c$. $\delta$, $\nu$\ are defined as
	\begin{eqnarray*}
	\delta:=1-\sum_{m=1}^{+\infty}\frac{c}{m^{2\beta+1+2\alpha}},\ \  \nu:=1-\sum_{m=1}^{+\infty}\frac{b}{m^2}.
	\end{eqnarray*}
\end{lemma}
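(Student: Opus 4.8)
The plan is to run, on the $L^{2}$ (i.e.\ $g$) side, a scale--by--scale ``escape of mass'' iteration: evolve $g$ under the semigroup, and at each of infinitely many tiny time steps peel off the heat mass that has left a slowly growing family of neighbourhoods of $V$, controlling each escaped piece by the $L^{2}$ off--diagonal bound (\ref{lem}). Set $A:=\big(\tfrac{b^{\beta}d^{\beta}}{4^{1+\alpha}ct}\big)^{1/(1+2\alpha)}$ and fix $t_{0}<1$ (also $t_0<T$); then $A\ge A_{0}:=\big(\tfrac{b^{\beta}d^{\beta}}{4^{1+\alpha}ct_{0}}\big)^{1/(1+2\alpha)}>0$ for $0<t<t_{0}$. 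Put $d_{m}:=bd/m^{2}$ and $t_{m}:=ct/m^{2\beta+1+2\alpha}$, so $\sum_{m\ge1}d_{m}=(1-\nu)d$ and $t=\delta t+\sum_{m\ge1}t_{m}$; the constraints on $b,c$ make $\nu,\delta\in(0,1)$. Let $V_{0}:=V$ and $V_{m}:=\{x:\rho_{X}(x,V_{m-1})<d_{m}\}$; then $\rho_{X}(V_{m-1},V_{m}^{c})\ge d_{m}$ and, telescoping, $\rho_{X}(U,V_{m})\ge 2d-\sum_{j\le m}d_{j}\ge(1+\nu)d$, so $\widetilde V:=\overline{\bigcup_{m}V_{m}}$ (or a slightly larger open set) has $V\subset\widetilde V$ and $\rho_{X}(U,\widetilde V)\ge(1+\nu)d$. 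The one arithmetic identity making everything line up is that the exponent (\ref{lem}) produces at scale $(d_{m},t_{m})$ is exactly $\big(\tfrac{d_{m}^{\beta}}{4^{1+\alpha}t_{m}}\big)^{1/(1+2\alpha)}=mA$.

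\emph{The iteration.} Set $\widetilde g^{(0)}:=g$. Given $\widetilde g^{(m-1)}$ supported in $V_{m-1}$ with $\|\widetilde g^{(m-1)}\|_{2}\le\|g\|_{2}$, put $h^{(m)}:=H_{t_{m}}\widetilde g^{(m-1)}$, $\widetilde g^{(m)}:=1_{V_{m}}h^{(m)}$, $r^{(m)}:=1_{V_{m}^{c}}h^{(m)}$; then $\widetilde g^{(m)}$ is supported in $V_{m}$ with $\|\widetilde g^{(m)}\|_{2}\le\|h^{(m)}\|_{2}\le\|g\|_{2}$. Since $\mbox{supp}\,\widetilde g^{(m-1)}\subset V_{m-1}$ and $\mbox{supp}\,r^{(m)}\subset V_{m}^{c}$ lie at distance $\ge d_{m}$, testing $r^{(m)}$ against $L^{2}$ functions supported in $V_{m}^{c}$ and using (\ref{lem}) (preceded by an exhaustion of $V_{m}^{c}$ by precompact sets, since (\ref{lem}) is stated for precompact supports) gives $\|r^{(m)}\|_{2}\le e^{-mA}\|g\|_{2}$. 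A routine telescoping identity yields $H_{\sum_{j=1}^{m}t_{j}}g=\widetilde g^{(m)}+\sum_{j=1}^{m}H_{\sum_{l=j+1}^{m}t_{l}}r^{(j)}$; letting $m\to\infty$ (strong continuity of the semigroup on the left, dominated convergence in $L^{2}$ on the right because $\sum_{j}e^{-jA}<\infty$) gives $H_{(1-\delta)t}g=\widetilde g+R$ with $\mbox{supp}\,\widetilde g\subset\widetilde V$, $\|\widetilde g\|_{2}\le\|g\|_{2}$, and $\|R\|_{2}\le\sum_{j\ge1}\|r^{(j)}\|_{2}\le\frac{e^{-A}}{1-e^{-A}}\|g\|_{2}\le\frac{1}{1-e^{-A_{0}}}\,e^{-A}\|g\|_{2}$.

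\emph{Conclusion.} By self--adjointness and the semigroup law, $\langle H_{t}f,g\rangle=\langle H_{\delta t}f,H_{(1-\delta)t}g\rangle=\langle H_{\delta t}f,\widetilde g\rangle+\langle H_{\delta t}f,R\rangle$. For the second term, Cauchy--Schwarz together with $\|H_{\delta t}f\|_{2}\le\|H_{\delta t}\|_{2\to\infty}\|f\|_{1}\le e^{M(\delta t)}\|f\|_{1}$ (duality $\|H_{\delta t}\|_{1\to2}=\|H_{\delta t}\|_{2\to\infty}$ from self--adjointness, plus global ultracontractivity) gives $|\langle H_{\delta t}f,R\rangle|\le\frac{e^{M(\delta t)}}{1-e^{-A_{0}}}\,e^{-A}\|f\|_{1}\|g\|_{2}$, which is the asserted error term plus the remainder $|\langle H_{\delta t}f,\widetilde g\rangle|$, and $\widetilde V,\widetilde g$ have all the claimed properties.

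\emph{Main obstacle.} The only genuinely delicate point is that the constant just produced is $C=e^{M(\delta t)}/(1-e^{-A_{0}})$, and $M(\delta t)$ need not stay bounded as $t\to0^{+}$. This is exactly where hypothesis (\ref{M(t)}), $\lim_{t\to0^{+}}t^{1/(1+2\alpha)}M(t)=0$, enters: since $A$ is comparable to $t^{-1/(1+2\alpha)}$, it forces $M(\delta t)=o(A)$, so after possibly shrinking $t_{0}$ (and adjusting $c$ by a fixed factor if one wants the literal exponent) the factor $e^{M(\delta t)}$ is absorbed into the exponential; in any case it telescopes harmlessly against the exponent when the lemma is iterated in the proof of Theorem~\ref{appendixprop}. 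The residual technicalities --- precompactness when applying (\ref{lem}) and verifying that the $L^{2}$--limit $\widetilde g$ is supported in $\widetilde V$ --- are routine.
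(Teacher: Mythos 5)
Your proposal is correct, and it follows the same skeleton as the paper's proof --- the identical time splitting $t=\delta t+\sum_m ct/m^{2\beta+1+2\alpha}$, the identical onion of neighbourhoods $V_m$ with gaps $bd/m^2$, and in fact the identical sequence $g_m=1_{V_m}H_{t_m}g_{m-1}$ --- but the error bookkeeping is genuinely different, and in a useful way. The paper splits each step as $\langle H_{t_k}(\Phi_k H_{t(1-cs_k)}f),g_{k-1}\rangle+\langle H_{t(1-cs_k)}f,\Psi_kH_{t_k}g_{k-1}\rangle$ and estimates every cross term by (\ref{lem}) \emph{together with} ultracontractivity $\|\Phi_kH_{t(1-cs_k)}f\|_2\le e^{M(t(1-cs_k))}\|f\|_1$, so hypothesis (\ref{M(t)}) must be invoked term by term (via the choice $(2\epsilon)^{1+2\alpha}=b^\beta d^\beta$ and the constraint tying $c$ to $4^{1+\alpha}$), and $\widetilde g$ is produced as a weak subsequential limit of the $g_n$. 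You instead measure the escaped $L^2$ mass $r^{(m)}=1_{V_m^c}H_{t_m}\widetilde g^{(m-1)}$ by duality against (\ref{lem}) (your arithmetic check that the exponent at scale $(d_m,t_m)$ is exactly $mA$ is right), sum it, obtain $\widetilde g$ as a \emph{strong} $L^2$ limit with the support and norm properties immediate, and invoke ultracontractivity only once, on $H_{\delta t}f$, via $\|H_{\delta t}\|_{1\to2}=\|H_{\delta t}\|_{2\to\infty}$. This buys a cleaner constraint on $c$ (only $cs_\infty<1$ plus the smallness needed to absorb $M(\delta t)$, with no $4^{1+\alpha}$ factor) and isolates the role of (\ref{M(t)}) in a single place. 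The one blemish you flag --- that absorbing $e^{M(\delta t)}$ costs a factor such as $e^{A/2}$, so you prove the bound with exponent $-\tfrac12 A$ rather than the literal $-A$ of the statement --- is shared by the paper's own proof, which also concludes with $\exp\{-\tfrac12(\cdot)^{1/(1+2\alpha)}\}$ and carries that $\tfrac12$ into Lemma \ref{lemma2} and Theorem \ref{appendixprop}; so your fix (shrink $t_0$, or rescale $c$ by a fixed factor) is exactly the slack the paper itself tolerates. The remaining technical points you defer (extending (\ref{lem}) to non-precompact supports by exhaustion, support and norm of the strong limit, interchanging the $m\to\infty$ limit with the finite sums) are indeed routine, and the paper implicitly relies on the same exhaustion argument when it applies (\ref{lem}) with $u$ supported in the non-precompact set $U_1=X\setminus V_1$.
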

\begin{proof}
	We use iteration to decompose $<H_tf,\,g>$\ into a sum of terms in the form of (\ref{lem}), and a remaining term.
	\subsubsection*{Step 1 of iteration}
	For some small number $b>0$\ to be determined later, there exists a measurable set $V_1$\ satisfying that
	\begin{eqnarray*}
	V\subset V_1,\ \ \rho_X\left(X\setminus V_1,V\right)\geq bd,\ \ \rho_X\left(U,V_1\right)\geq \left(2-b\right)d.
	\end{eqnarray*} 
    For example, $V_1$\ can be taken as
    \begin{eqnarray*}
    V_1=\left\{x\in X\,|\,\rho_X(x,V)\leq bd\right\}.
    \end{eqnarray*}
    Denote $U_1:=X\setminus V_1$, then $U\subset U_1$\ and $\rho_X\left(U_1,V\right)\geq bd$. Let $\Phi_1$\ and $\Psi_1$\ be the characteristic functions of $U_1$\ and $V_1$\ respectively. For any $0<c<1$,
	\begin{eqnarray}
	\lefteqn{<H_tf,\,g>= <H_{ct}\left(\Phi_1+\Psi_1\right)H_{(1-c)t}f,\,g>}\notag\\
	&=& <H_{ct}\left(\Phi_1 H_{(1-c)t}f\right),\,g>+<H_{(1-c)t}f,\,\Psi_1H_{ct}g>. \label{step1}
	\end{eqnarray}
	This is the first step in splitting $<H_tf,\,g>$. To proceed we give an estimate for the first term in (\ref{step1}), and further split the second term in the second iteration step. For the first term in (\ref{step1}), apply (\ref{lem}) for $u=\Phi_1 H_{(1-c)t}f$, $v=g$, and $s=ct$, we get that
	\begin{eqnarray*}
	\lefteqn{|<H_{ct}\left(\Phi_1 H_{(1-c)t}f\right),\,g>|}\notag\\
	&\leq& \exp{\left\{-\left(\frac{\rho_X\left(U_1,V\right)^\beta}{4^{1+\alpha}ct}\right)^{1/\left(1+2\alpha\right)}\right\}}\left|\left|\Phi_1 H_{(1-c)t}f\right|\right|_2\left|\left|g\right|\right|_2\notag\\
	&\leq& \exp{\left\{-\left(\frac{b^\beta d^\beta}{4^{1+\alpha}ct}\right)^{1/\left(1+2\alpha\right)}\right\}}\cdot e^{M((1-c)t)}||f||_1||g||_2\notag\\
	&\leq& \exp{\left\{-\left(\frac{b^\beta d^\beta}{4^{1+\alpha}ct}\right)^{1/\left(1+2\alpha\right)}\right\}}\exp{\left\{\frac{\epsilon}{\left((1-c)t\right)^{1/\left(1+2\alpha\right)}}\right\}}||f||_1||g||_2. 
	\end{eqnarray*}
	The last line holds for all $0<t<t_0$\ for some fixed $t_0>0$\ that depends on the choice of $\epsilon$. More precisely, recall the assumption that $M(t)=o\left(t^{-1/\left(1+2\alpha\right)}\right)$. Thus for any $\epsilon>0$,
	\begin{eqnarray*}
	\lim_{t\rightarrow 0}\frac{M(t)}{\epsilon t^{-1/\left(1+2\alpha\right)}}=0.
	\end{eqnarray*}
    $\epsilon$\ is to be determined later.
    \subsubsection*{Step 2 of iteration}
	Let $\iota:=2\beta+1+2\alpha$. Let
	\begin{eqnarray*}
	g_1:=\Psi_1H_{ct}g,
	\end{eqnarray*}
	then $g_1$\ is supported in $V_1$, and $\left|\left|g_1\right|\right|_2\leq \left|\left|g\right|\right|_2$\ since $H_{ct}$\ is a contraction on $L^2(X)$. We repeat the iteration to the second term in (\ref{step1}) by writing
	\begin{eqnarray}
	\lefteqn{<H_{(1-c)t}f,\,\Psi_1H_{ct}g>
	= <H_{(1-c)t}f,\,g_1>}\notag\\
	&=& <H_{ct/2^{\iota}}\left(\Phi_2+\Psi_2\right)H_{t(1-c-c/2^{\iota})}f,\, g_1>\notag\\
	&=& <H_{ct/2^{\iota}}\left(\Phi_2H_{t(1-c-c/2^{\iota})}f\right),\, g_1>\notag\\
	&&+<H_{t(1-c-c/2^{\iota})}f,\, \Psi_2H_{ct/2^{\iota}}g_1>.\label{step2}
	\end{eqnarray}
	Here $U_2$, $V_2$\ ($U_2=X\setminus V_2$) are such that $U_2\subset U_1$, $V_1\subset V_2$, and
	\begin{eqnarray*}
	\rho_X(U_2,V_1)\geq bd/2^2,\ \ \rho_X(U,V_2)\geq (2-b-b/2^2)d.
	\end{eqnarray*}
	$\Phi_2$, $\Psi_2$\ are characteristic functions of $U_2$, $V_2$, respectively. As in the first iteration step, we can estimate the first term in (\ref{step2}) using (\ref{lem}) as
	\begin{eqnarray*}
	\lefteqn{\left|<H_{ct/2^{\iota}}\left(\Phi_2H_{t(1-c-c/2^{\iota})}f\right),\, g_1>\right|}\notag\\
	&\leq& \exp{\left\{-\left(\frac{b^\beta d^\beta/2^{2\beta}}{4^{1+\alpha}ct/2^{\iota}}\right)^{\frac{1}{1+2\alpha}}\right\}}\exp{\left\{M\left(t\left(1-c-\frac{c}{2^{\iota}}\right)\right)\right\}}\left|\left|f\right|\right|_1\left|\left|g_1\right|\right|_2\notag\\
	&\leq& \exp{\left\{-2\left(\frac{b^\beta d^\beta}{4^{1+\alpha}ct}\right)^{\frac{1}{1+2\alpha}}\right\}}\exp{\left\{\epsilon t^{-\frac{1}{1+2\alpha}}\left(1-c-\frac{c}{2^{\iota}}\right)^{-\frac{1}{1+2\alpha}}\right\}}||f||_1||g||_2.
	\end{eqnarray*}
	\subsubsection*{General step of iteration}
	 Let $s_k:=\sum\limits_{m=1}^{k}m^{-\iota}$. Repeating the above iteration, in the general nth step we have
	\begin{eqnarray}
	\lefteqn{<H_tf,\,g>}\notag\\
	&=&\sum_{k=1}^{n}\left<H_{ct/k^{\iota}}\left(\Phi_kH_{t(1-cs_k)}f\right),\,g_{k-1}\right>+\left<H_{t(1-cs_n)}f,\,\Psi_nH_{ct/n^{\iota}}g_{n-1}\right>\notag\\
	&=&\sum_{k=1}^{n}\left<H_{ct/k^{\iota}}\left(\Phi_kH_{t(1-cs_k)}f\right),\,g_{k-1}\right>+\left<H_{t(1-cs_n)}f,\,g_n\right>,\label{general}
	\end{eqnarray}
	where each $\Phi_k,\Psi_k$\ is a pair of characteristic functions corresponding to some $U_k,V_k$\ that partition $X$. The sets satisfy that $U_k\subset U_{k-1}$, $V_{k-1}\subset V_k$, and
	\begin{eqnarray*}
	\rho_X\left(U_k,V_{k-1}\right)\geq \frac{bd}{k^2},\ \ \rho_X\left(U,V_k\right)\geq \left(2-\sum_{m=1}^{k}\frac{b}{m^2}\right)d.
	\end{eqnarray*} 
	The functions $g_k$\ are obtained from $g_{k-1}$\ by
	\begin{eqnarray*}
	g_k=\Psi_kH_{t(1-cs_k)}g_{k-1},
	\end{eqnarray*}
	and here $g_0:=g$. In particular, all $\left|\left|g_k\right|\right|_2\leq \left|\left|g\right|\right|_2$.
	
	Next we find an upper bound for the sum term in (\ref{general}), and then treat the remaining term in (\ref{general}).
	
	\subsubsection*{Estimate of the sum term}
	As the estimates shown in the first two iteration steps, the sum in (\ref{general}) is bounded by
	\begin{eqnarray*}
	\lefteqn{\left|\sum_{k=1}^{n}\left<H_{ct/k^{\iota}}\left(\Phi_kH_{t(1-cs_k)}f\right),\,g_{k-1}\right>\right|}\\
	&\leq& \sum_{k=1}^{n}||f||_1||g||_2\cdot\exp{\left\{-k\left(\frac{b^\beta d^\beta}{4^{1+\alpha}ct}\right)^{\frac{1}{1+2\alpha}}+\epsilon t^{-\frac{1}{1+2\alpha}}\left(1-cs_k\right)^{-\frac{1}{1+2\alpha}}\right\}}.
	\end{eqnarray*}
	We want to pick $\epsilon,c$\ so that for all $k\in \mathbb{N}_+$,
	\begin{eqnarray}
	k\left(\frac{b^\beta d^\beta}{4^{1+\alpha}ct}\right)^{\frac{1}{1+2\alpha}}\geq 2\epsilon\, t^{-\frac{1}{1+2\alpha}}\left(1-cs_k\right)^{-\frac{1}{1+2\alpha}}. \label{c}
	\end{eqnarray}
	Choose $\epsilon$\ so that $\left(2\epsilon\right)^{1+2\alpha} = b^\beta d^\beta$,
    then (\ref{c}) is equivalent to
    \begin{eqnarray*}
    c\leq \min_{k}\frac{k^{1+2\alpha}}{k^{1+2\alpha}s_k+4^{1+\alpha}},
    \end{eqnarray*}
    thus it suffices to pick $0<c< \left((1+4^{1+\alpha})s_\infty\right)^{-1}$, where
    $s_\infty=\lim\limits_{k\rightarrow\infty}s_k=\sum\limits_{m=1}^{\infty}\frac{1}{m^{2\beta+1+2\alpha}}$. It follows that for any such small enough $c>0$, any $1\leq k\leq n$,
	\begin{eqnarray}
	\hspace{-.15in}\left|\left<H_{ct/k^{\iota}}\left(\Phi_kH_{t(1-cs_k)}f\right),\,g_{k-1}\right>\right|
	\leq \exp{\left\{-\frac{k}{2}\left(\frac{b^\beta d^\beta}{4^{1+\alpha}ct}\right)^{\frac{1}{1+2\alpha}}\right\}}||f||_1||g||_2.\label{firsttermestimate}
	\end{eqnarray}
	\subsubsection*{Treatment of the remaining term}
	For the remaining term in (\ref{general}), $\left<H_{t(1-cs_n)}f,\,g_n\right>$, note that
	\begin{itemize}
	    \item $||g_n||_2\leq ||g||_2$\ for all $n$\ implies the existence of a weakly convergent subsequence of $\{g_n\}$\ in $L^2(X)$. Denote the weak limit by $\widetilde{g}$. Then $||\widetilde{g}||_2\leq ||g||_2$.
	    \item $H_{t\,\left(1-cs_n\right)}f$\ converges to $H_{t\,\left(1-cs_\infty\right)}f$\ in $L^2(X)$, as $n\rightarrow \infty$.
	\end{itemize}
	Hence for the subsequence mentioned above,
	\begin{eqnarray}
	<H_{t(1-cs_n)}f,\,g_n>\rightarrow <H_{t(1-cs_\infty)}f,\,\widetilde{g}>.\label{remainingterm}
	\end{eqnarray}
	Letting $\delta:=1-cs_\infty$, we obtain the second term in (\ref{lem2}).
	
	Recall that each $g_n$\ is supported in a set $V_n$, with $V_1\subset V_2\subset \cdots \subset V_n\subset \cdots$. Consider
	\begin{eqnarray*}
	\widetilde{V}:=\bigcup_n V_n,
	\end{eqnarray*}
	then the weak limit $\widetilde{g}$\ is supported in $\widetilde{V}$. Let $\widetilde{U}$\ be the intersection of the sequence $U_1\supset U_2\supset \cdots \supset U_n\supset \cdots $, i.e.,
	\begin{eqnarray*}
	\widetilde{U}:=\bigcap_n U_n,
	\end{eqnarray*}
	then $U\subset \widetilde{U}$, $V\subset \widetilde{V}$, and $\rho_X(U,\widetilde{V})\geq \left(2-\sum\limits_{m=1}^{+\infty}b/m^2\right)d=\left(1+\nu\right)d$. Here
	\begin{eqnarray*}
	\nu:=1-\sum_{m=1}^{+\infty}b/m^2;
	\end{eqnarray*}
    $b>0$\ is chosen so that $\sum\limits_{m=1}^{+\infty}b/m^2<1$.
	
	
	We are now ready to prove (\ref{lem2}). For any $0<t<t_0$, applying (\ref{general}) and  (\ref{firsttermestimate}), we get
	\begin{eqnarray*}
	\lefteqn{\left|\left<H_tf,\,g\right>\right|}\\
	&\leq& C\exp{\left\{-\frac{1}{2}\left(\frac{b^\beta d^\beta}{4^{1+\alpha}ct}\right)^{\frac{1}{1+2\alpha}}\right\}}||f||_1||g||_2+\left|\left<H_{t(1-cs_n)}f,\,g_n\right>\right|.
	\end{eqnarray*}
	Here $C=\max\limits_{0<t<t_0} \left(1-\exp{\left\{-\frac{1}{2}\left(\frac{b^\beta d^\beta}{4^{1+\alpha}ct}\right)^{\frac{1}{1+2\alpha}}\right\}}\right)^{-1}$. Next apply (\ref{remainingterm}) to the proper subsequence and take $\liminf$, we get (\ref{lem2}), namely
	\begin{eqnarray*}
	\left|\left<H_tf,\,g\right>\right|
	\leq C\exp{\left\{-\frac{1}{2}\left(\frac{b^\beta d^\beta}{4^{1+\alpha}ct}\right)^{\frac{1}{1+2\alpha}}\right\}}||f||_1||g||_2+\left|\left<H_{t\delta}f,\,\widetilde{g}\right>\right|.
	\end{eqnarray*}
\end{proof}
To proceed in proving the $L^\infty$\ off-diagonal upper bound, we iterate the result of Lemma \ref{lem1} to get the following lemma.
\begin{lemma}
	\label{lemma2}
	Under the hypotheses of Lemma \ref{lem1}, \begin{eqnarray}
	\left|\left<H_tf,\,g\right>\right|\leq C^2\exp{\left\{-\frac{1}{2}\left(\frac{b^\beta d^\beta}{4^{1+\alpha}ct}\right)^{\frac{1}{1+2\alpha}}\right\}}||f||_1||g||_2.\label{mainlemma}
	\end{eqnarray}
\end{lemma}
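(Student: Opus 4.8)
The plan is to iterate Lemma~\ref{lem1} and sum the resulting rapidly decaying series, the only genuine difficulty being to dispose of the remainder term left over at the last step. Write $Y:=\bigl(b^\beta d^\beta/(4^{1+\alpha}ct)\bigr)^{1/(1+2\alpha)}$. Applying Lemma~\ref{lem1} to $\langle H_tf,g\rangle$ gives
\[
\bigl|\langle H_tf,g\rangle\bigr|\ \le\ C\,e^{-\frac12 Y}\,\|f\|_1\|g\|_2\ +\ \bigl|\langle H_{\delta t}f,\widetilde g\rangle\bigr|,
\]
with $\widetilde g$ supported in some $\widetilde V\supset V$ such that $\rho_X(U,\widetilde V)\ge(1+\nu)d$ and $\|\widetilde g\|_2\le\|g\|_2$. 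I would then feed $\langle H_{\delta t}f,\widetilde g\rangle$ back into Lemma~\ref{lem1} (now with time $\delta t$ and the pair $(U,\widetilde V)$), and repeat. At the $k$-th step one shrinks the cut-off parameter $b$ geometrically, chosen so that the successive separations $\rho_X\bigl(U,\operatorname{supp}\widetilde g_k\bigr)$ remain bounded below by a fixed positive fraction $c_*$ of $\rho_X(U,V)=2d$, while — thanks to the smallness of $b,c$, i.e.\ the relation $\nu^\beta\delta^{-1/(1+2\alpha)}\ge 2$ which forces the governing ratio to be at least $2$ — the exponents $Y_k$ are nondecreasing with $Y_1=Y$ and grow to $+\infty$. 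After $n$ steps one obtains
\[
\bigl|\langle H_tf,g\rangle\bigr|\ \le\ \Bigl(\sum_{k=1}^{n}C\,e^{-\frac12 Y_k}\Bigr)\|f\|_1\|g\|_2\ +\ \bigl|\langle H_{\delta^n t}f,\widetilde g_n\rangle\bigr|,
\]
where $\widetilde g_n$ is supported at distance $\ge c_*\cdot 2d>0$ from $U$ and $\|\widetilde g_n\|_2\le\|g\|_2$.

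Summing the series is elementary: since the $Y_k$ increase geometrically and $C=\max_{0<t<t_0}\bigl(1-e^{-\frac12 Y(t)}\bigr)^{-1}$, the bound $\sum_{k\ge1}e^{-\frac12 Y_k}\le e^{-\frac12 Y}\bigl(1-e^{-\frac12 Y}\bigr)^{-1}$ together with $\bigl(1-e^{-\frac12 Y}\bigr)^{-1}\le C$ gives $\sum_{k\ge1}C\,e^{-\frac12 Y_k}\le C^2 e^{-\frac12 Y}$, which is exactly the constant claimed in (\ref{mainlemma}).

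It remains to show the leftover term tends to $0$. Because $\operatorname{supp}f\subset U$ and $\operatorname{supp}\widetilde g_n$ are disjoint, $\langle f,\widetilde g_n\rangle=0$, so $\langle H_{\delta^n t}f,\widetilde g_n\rangle=\langle 1_{\operatorname{supp}\widetilde g_n}H_{\delta^n t}f,\widetilde g_n\rangle$ and hence $\bigl|\langle H_{\delta^n t}f,\widetilde g_n\rangle\bigr|\le\bigl\|1_{\operatorname{supp}\widetilde g_n}H_{\delta^n t}f\bigr\|_2\,\|g\|_2$. Upgrading the $L^2\to L^2$ off-diagonal estimate (\ref{lem}) to an $L^1\to L^2$ estimate via the ultracontractivity of $H_t$ (the standard near/far splitting argument), and using that $\operatorname{supp}\widetilde g_n$ stays at the fixed positive distance $\ge c_*\cdot 2d$ from $U$, one gets $\bigl\|1_{\operatorname{supp}\widetilde g_n}H_{\delta^n t}f\bigr\|_2\lesssim e^{-A(\delta^n t)^{-1/(1+2\alpha)}}e^{\frac12 M_1(\delta^n t)}\|f\|_1$ for a fixed $A>0$. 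By hypothesis (\ref{M(t)}), $M_1(\delta^n t)=o\bigl((\delta^n t)^{-1/(1+2\alpha)}\bigr)$, so $-A(\delta^n t)^{-1/(1+2\alpha)}+\tfrac12 M_1(\delta^n t)\to-\infty$ and the remainder vanishes as $n\to\infty$; letting $n\to\infty$ in the displayed inequality yields (\ref{mainlemma}). The hard part is precisely this last step: since $f$ is only in $L^1$, one cannot invoke strong $L^2$-continuity of $H_t$ as the time collapses to $0$, and the supports must be kept separated throughout — which is why the cut-off parameter is shrunk geometrically along the iteration (this also pins down the constant $C^2$ and the factor $\tfrac12$) and why the quantitative hypothesis $\lim_{t\to0^+}t^{1/(1+2\alpha)}M(t)=0$ is indispensable.
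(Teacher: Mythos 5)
Your first half (iterating Lemma \ref{lem1}, using the smallness of $b,c$ to make the successive exponents grow with ratio at least $2$, and summing the resulting geometric-type series against $C=\max_{0<t<t_0}(1-e^{-\frac12 Y})^{-1}$ to get the constant $C^2e^{-\frac12 Y}$) is essentially the paper's argument, up to a bookkeeping difference: the paper keeps $b,c$ fixed and lets the distance parameter decay as $\nu^n d$ (so the separation from $U$ stays $\geq d$ and the exponents are exactly $(\nu^\beta/\delta)^{n/(1+2\alpha)}Y$), whereas you shrink $b$ along the outer iteration; note that changing $b$ at each step also changes the constant $C$ produced by Lemma \ref{lem1}, which your uniform summation silently ignores.

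The genuine gap is in your disposal of the remainder term $\langle H_{\delta^n t}f,\widetilde g_n\rangle$. You claim an $L^1\to L^2$ off-diagonal bound $\|1_{\operatorname{supp}\widetilde g_n}H_{s}f\|_2\lesssim e^{-A s^{-1/(1+2\alpha)}}e^{\frac12 M_1(s)}\|f\|_1$ obtained by "upgrading (\ref{lem}) via ultracontractivity with the standard near/far splitting." That splitting does not close: writing $1_WH_sf=1_WH_{s/2}(1_{U'}H_{s/2}f)+1_WH_{s/2}(1_{X\setminus U'}H_{s/2}f)$, the near piece is fine (apply (\ref{lem}) and $\|H_{s/2}\|_{1\to2}\leq e^{M(s/2)}$), but the far piece $\|1_{X\setminus U'}H_{s/2}f\|_2$ is again exactly an $L^1\to L^2$ off-diagonal quantity of the same type at halved distance and time, so the argument is circular; no finite number of such splittings terminates, and turning it into a convergent infinite scheme is precisely the content of Lemmas \ref{lem1}--\ref{lemma2} (indeed of Theorem \ref{appendixprop} itself), so invoking such a bound here begs the question. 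The paper avoids any quantitative estimate on the remainder: since $\|\widetilde g_n\|_2\leq\|g\|_2$ and all $\widetilde g_n$ are supported in $\mathbf V=\bigcup_n\widetilde V_{\delta^n}$ with $\rho_X(U,\mathbf V)\geq d>0$, one extracts a weakly convergent subsequence $\widetilde g_{\delta^{n_k}}\rightharpoonup\mathbf g$ and uses that the pairing converges to $\langle f,\mathbf g\rangle=0$ because the supports are disjoint. (Alternatively, in the situation where the lemma is actually applied one has $f\in L^1\cap L^2$, so $|\langle H_{\delta^n t}f,\widetilde g_n\rangle|=|\langle H_{\delta^n t}f-f,\widetilde g_n\rangle|\leq\|H_{\delta^n t}f-f\|_2\|g\|_2\to0$ by strong $L^2$-continuity; the obstruction you correctly identify for $f\in L^1$ only cannot be overcome by the quantitative shortcut you propose.) As written, your proof of (\ref{mainlemma}) is incomplete at this step.
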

\begin{proof}
We rename $\widetilde{g}$\ by $\widetilde{g}_{\scaleto{\delta}{4pt}}$\ to indicate its co-appearance with the term $H_{\delta t}f$, and $\widetilde{V}$\ by $\widetilde{V}_\delta$\ for the same reason. Then $\rho_X(U,\widetilde{V}_{\delta})\geq (1+\nu)d$, and Lemma \ref{lem1} reads as
\begin{eqnarray*}
	\left|\left<H_tf,\,g\right>\right|\leq C\exp{\left\{-\frac{1}{2}\left(\frac{b^\beta d^\beta}{4^{1+\alpha}ct}\right)^{\frac{1}{1+2\alpha}}\right\}}||f||_1||g||_2+\left|\left<H_{\delta t}f,\,\widetilde{g}_{\scaleto{\delta}{4pt}}\right>\right|.
\end{eqnarray*}
To start with the iteration, we repeat Lemma \ref{lem1} to the pair of functions $f\in L^1(U)$, $\widetilde{g}_{\scaleto{\delta}{4pt}}\in L^2(\widetilde{V}_\delta)$. Note that here $\rho_X(U,\widetilde{V}_\delta)=(1+\nu)d$, repeating the procedure in Lemma \ref{lem1} leads to an $L^2$\ function $\widetilde{g}_{\scaleto{\delta^2}{4pt}}$\ supported in a measurable set $\widetilde{V}_{\delta^2}$\ that satisfies $\widetilde{V}_\delta\subset \widetilde{V}_{\delta^2}$, and $\rho_X(U,\widetilde{V}_{\delta^2})\geq (1+\nu^2)d$. Here we use the subscript $\delta^2$\ to indicate the co-appearance of the set and function with the term $H_{\delta^2t}f$\ in the estimate
\begin{eqnarray*}
	\left|\left<H_{\delta t}f,\,\widetilde{g}_{\scaleto{\delta}{4pt}}\right>\right|\leq C\exp{\left\{-\frac{1}{2}\left(\frac{b^\beta (\nu d)^\beta}{4^{1+\alpha}c\delta t}\right)^{\frac{1}{1+2\alpha}}\right\}}||f||_1||\widetilde{g}_{\scaleto{\delta}{4pt}}||_2+\left|\left<H_{\delta^2 t}f,\,\widetilde{g}_{\scaleto{\delta^2}{4pt}}\right>\right|.
\end{eqnarray*}
Repeating this iteration then shows that for all $N$,
\begin{eqnarray}
\left|\left<H_{t}f,\,g\right>\right|
&\leq& \sum_{n=0}^{N}C\exp{\left\{-\frac{1}{2}\left(\frac{b^\beta (\nu^n d)^\beta}{4^{1+\alpha}c\delta^n t}\right)^{\frac{1}{1+2\alpha}}\right\}}||f||_1||\widetilde{g}_{\delta^n}||_2\notag\\
&&+\left|\left<H_{\delta^{\scaleto{N+1}{3pt}} t}f,\,\widetilde{g}_{\scaleto{\delta^{N+1}}{4pt}}\right>\right|.\label{finalestimate}
\end{eqnarray}
All $\left|\left|\widetilde{g}_{\scaleto{\delta^n}{4pt}}\right|\right|_2$\ are bounded above by $||g||_2$. Recall that $\nu=1-b\sum\limits_{m=1}^{+\infty}\frac{1}{m^2}$, $\delta=1-c\sum\limits_{m=1}^{+\infty}\frac{1}{m^{2\beta+1+2\alpha}}$. By picking $b>0$\ small enough, we can make $(\nu^\beta/\delta)^{1/(1+2\alpha)}\geq 2$, this guarantees the convergence of the sum in (\ref{finalestimate}): since $(\nu^\beta/\delta)^{(n+1)/(1+2\alpha)}-(\nu^\beta/\delta)^{n/(1+2\alpha)}\geq 2$, all $\left\lfloor\left(\nu^\beta/\delta\right)^{n/(1+2\alpha)}\right\rfloor$\ are distinct integers. Hence the sum in (\ref{finalestimate}) satisfies
\begin{eqnarray*}
\lefteqn{\sum_{n=0}^{N}C\exp{\left\{-\frac{1}{2}\left(\frac{b^\beta (\nu^n d)^\beta}{4^{1+\alpha}c\delta^n t}\right)^{\frac{1}{1+2\alpha}}\right\}}||f||_1||\widetilde{g}_{\scaleto{\delta^n}{4pt}}||_2}\\
&\leq &\sum_{k=1}^{2^N}C\exp{\left\{-\frac{k}{2}\left(\frac{b^\beta d^\beta}{4^{1+\alpha}ct}\right)^{\frac{1}{1+2\alpha}}\right\}}||f||_1||g||_2.
\end{eqnarray*}
Let $N$\ tend to infinity, we get that the sum part in (\ref{finalestimate}) is bounded above by
\begin{eqnarray}
C^2\exp{\left\{-\frac{1}{2}\left(\frac{b^\beta d^\beta}{4^{1+\alpha}ct}\right)^{\frac{1}{1+2\alpha}}\right\}}||f||_1||g||_2.\label{1stterminfinalestimate}
\end{eqnarray}

For the second term in (\ref{finalestimate}), $\left|\left<H_{\delta^{\scaleto{N+1}{3pt}} t}f,\,\widetilde{g}_{\scaleto{\delta^{\scaleto{N+1}{3pt}}}{4pt}}\right>\right|$, we use the same weak-convergence argument as in Lemma \ref{lem1}. More precisely, let $\mathbf{V}$\ be the union of the increasing set sequence $\{\widetilde{V}_{\scaleto{\delta^n}{5pt}}\}$, i.e., 
\begin{eqnarray*}
\mathbf{V}:=\bigcup_n \widetilde{V}_{\scaleto{\delta^n}{5pt}},
\end{eqnarray*}
then all $\widetilde{g}_{\scaleto{\delta^n}{4pt}}$\ are supported in $\mathbf{V}$, and $\rho_X(U,\mathbf{V})\geq d>0$. All $||\widetilde{g}_{\scaleto{\delta^n}{4pt}}||_2\leq ||g||_2$, so there exists some weakly convergent subsequence $\left\{\widetilde{g}_{\scaleto{\delta^{n_{\scaleto{k}{3pt}}}}{4pt}}\right\}_{k}$, let $\mathbf{g}\in L^2(\mathbf{V})$\ be the weak limit. Then as $k\rightarrow +\infty$,
\begin{eqnarray*}
\left<H_{\delta^{n_{\scaleto{k}{3pt}}}t}f,\,\widetilde{g}_{\scaleto{\delta^{n_{\scaleto{k}{3pt}}}}{4pt}}\right>\rightarrow <f,\,\mathbf{g}>=0,
\end{eqnarray*}
as $f$\ and $\mathbf{g}$\ have disjoint supports ($\rho_X(U,\mathbf{V})\geq d>0$). Thus by taking $\liminf$\ in (\ref{finalestimate}) and plugging in the upper bound (\ref{1stterminfinalestimate}) for the sum part, we get (\ref{mainlemma}), i.e.,
\begin{eqnarray*}
	\left|<H_tf,\,g>\right|\leq C^2\exp{\left\{-\frac{1}{2}\left(\frac{b^\beta d^\beta}{4^{1+\alpha}ct}\right)^{\frac{1}{1+2\alpha}}\right\}}||f||_1||g||_2.
\end{eqnarray*}
\end{proof}
Finally, consider two functions $u,v\in L^1\cap L^2$\ satisfying that $\mbox{supp}\{u\}\subset U$, $\mbox{supp}\{v\}\subset V$. We bound $\left|\left<H_tu,\,v\right>\right|$\ using the $L^1$\ norms of both $u$\ and $v$. For convenience we consider $\left<H_{2t}u,\,v\right>$. To apply (\ref{mainlemma}), decompose
\begin{eqnarray*}
	\lefteqn{<H_{2t}u,\,v>=<H_tu,\,H_tv>}\\
	&=&<H_tu,\,(\Phi+\Psi)H_tv>=<H_tu,\,\Phi H_tv>+<\Psi H_tu,\,H_tv>.
\end{eqnarray*}
Here $\Phi=1_{O_1}$\ and $\Psi=1_{O_2}$\ are characteristic functions and their supports partition $X$. $O_1,O_2$\ further satisfy that $\mbox{supp}\{u\}\subset U\subset O_1$, $\mbox{supp}\{v\}\subset V\subset O_2$, and $\rho_X(U, O_2),\rho_X(V, O_1)\geq \frac{1}{2}\rho_X(U,V)$. Apply (\ref{mainlemma}) to estimate $<H_tu,\,\Phi H_tv>$\ and $<\Psi H_tu,\,H_tv>$\ separately by setting $f=u$, $g=\Phi H_tv$\ for the first term, and $f=v$, $g=\Psi H_tu$\ for the second term. By Lemma \ref{lemma2}, for small enough $b,c>0$, for $d=\frac{1}{4}\rho_X(U,V)$, there exists some $t_0>0$, such that for all $0<t<t_0$,
\begin{eqnarray*}
	\lefteqn{\left|\left<H_{2t}u,\,v\right>\right|}\\
	&\leq& \left|\left<H_tu,\,\Phi H_tv\right>\right|+\left|\left<\Psi H_tu,\,H_tv\right>\right|\\
	&\leq& C^2\exp{\left\{-\frac{1}{2}\left(\frac{b^\beta d^\beta}{4^{1+\alpha}ct}\right)^{\frac{1}{1+2\alpha}}\right\}}\left(||u||_1||\Phi H_tv||_2+||v||_1||\Psi H_tu||_2\right)\\
	&\leq& C^2\exp{\left\{-\frac{1}{2}\left(\frac{b^\beta d^\beta}{4^{1+\alpha}ct}\right)^{\frac{1}{1+2\alpha}}\right\}}\cdot 2\left|\left|u\right|\right|_1\left|\left|v\right|\right|_1e^{M(t)}.
\end{eqnarray*} 
By the assumption on $M(t)$\ (\ref{M(t)}),
\begin{eqnarray*}
C_0:=\sup_{0<t<t_0}C^2\exp{\left\{-\frac{1}{4}\left(\frac{b^\beta d^\beta}{4^{1+\alpha}ct}\right)^{\frac{1}{1+2\alpha}}\right\}}\cdot 2e^{M(t)}<+\infty.
\end{eqnarray*}
This completes the proof of Theorem \ref{appendixprop}.

As in the $L^2$\ case (cf. \cite{L2}), we can similarly generalize the above result to time derivatives of $<H_tu,\,v>$.

\bibliographystyle{plain}
\bibliography{hyplinf}

\end{document}